\DeclareSymbolFontAlphabet{\amsmathbb}{AMSb}%
\let\epsilon\varepsilon
\let\phi\varphi
\newcommand{\bse}{\begin{subequations}}
\newcommand{\ese}{\end{subequations}}
\newcommand{\lmin}{\lambda_{\textnormal{min}}}
\newcommand{\lmax}{\lambda_{\textnormal{max}}}
\newcommand{\pt}{\partial_t}
\newcommand{\dx}{\textnormal{d}x}
\newcommand{\bP}{\mathbf{P}}
\newcommand{\bid}{\mathbf{I}}
\newcommand{\bu}{\mathbf{u}}
\newcommand{\bv}{\mathbf{v}}
\newcommand{\be}{\mathbf{e}}
\newcommand{\bsig}{\boldsymbol{\sigma}}
\newcommand{\blambda}{\boldsymbol{\Lambda}}
\DeclareMathOperator{\diam}{diam}
\DeclareMathOperator{\ess}{ess}
\newcommand{\Tcal}{\mathcal{T}}
\newcommand{\Ccal}{\mathcal{C}}
\newcommand{\real}{{\amsmathbb R}}
\newcommand{\nat}{{\amsmathbb N}}
\newcommand{\bC}{\amsmathbb{C}}
\def\eps{\varepsilon}
\newcommand{\norm}[1]{\lVert#1\rVert}
\newtheorem{theorem}{Theorem}[section]
\newtheorem{remark}{Remark}[section]
\newtheorem{assum}{Assumption}
\newtheorem{Algo}{Algorithm}
\newcommand{\pushright}[1]{\ifmeasuring@#1\else\omit\hfill$\displaystyle#1$\fi\ignorespaces}
\newcommand{\pushleft}[1]{\ifmeasuring@#1\else\omit$\displaystyle#1$\hfill\fi\ignorespaces}
\begin{document}

\title{An iterative staggered scheme for phase field brittle fracture propagation with stabilizing parameters}
\author{Mats Kirkes\ae{}ther Brun\footnotemark[2]\and
Thomas Wick\footnotemark[3]
\and Inga Berre\footnotemark[2] \footnotemark[4]
\and Jan Martin Nordbotten\footnotemark[2]\ \footnotemark[5]
\and Florin Adrian Radu\footnotemark[2] 
}
\date{\today}
\maketitle

\renewcommand{\thefootnote}{\fnsymbol{footnote}}

\footnotetext[2]{Department of Mathematics, University of Bergen, P. O. Box 7800, N-5020 Bergen, Norway.
\href{mailto:mats.brun@uib.no}{mats.brun@uib.no},
\href{mailto:inga.berre@uib.no}{inga.berre@uib.no},
\href{mailto:jan.nordbotten@uib.no}{jan.nordbotten@uib.no},
\href{mailto:florin.florin.radu@uib.no}{florin.radu@uib.no}
}
\footnotetext[3]{Leibniz University Hannover, Institute of Applied
  Mathematics, AG Wissenschaftliches Rechnen, Welfengarten 1, 30167 Hannover.
\href{mailto:thomas.wick@ifam.uni-hannover.de}{thomas.wick@ifam.uni-hannover.de}
}
\footnotetext[4]{NORCE Norwegian Research Centre AS, Bergen, Norway.}
\footnotetext[5]{Department of Civil and Environmental Engineering, Princeton University, Princeton, N. J., USA.}
\renewcommand{\thefootnote}{\arabic{footnote}}

\numberwithin{equation}{section}

\begin{abstract}
This paper concerns the analysis and implementation of a novel iterative staggered scheme for quasi-static brittle fracture propagation models, where the fracture evolution is tracked by a phase field variable. The
model we consider is a two-field variational inequality system, with the phase
field function and the elastic displacements of the solid material as independent variables. Using a penalization strategy, this variational inequality system is transformed into a variational equality system, which is the formulation we take as the starting point for our algorithmic developments. The proposed scheme involves a partitioning of this model into two subproblems;~\emph{phase field} and~\emph{mechanics}, with added stabilization terms to both subproblems for improved efficiency and robustness. We analyze the convergence of the proposed scheme using a fixed point argument, and find that under a natural condition, the elastic mechanical energy remains bounded, and, if the diffusive zone around crack surfaces is sufficiently thick, monotonic convergence is achieved. Finally, the proposed scheme is validated numerically with several bench-mark problems. 


\end{abstract}
\vspace{3mm}

\noindent{\bf Key words:} phase field; quasi-static; brittle fracture; fracture propagation; $L$-scheme; fixed stress; iterative algorithm; linearization; convergence analysis; fixed point; finite element;

\pagestyle{myheadings} \thispagestyle{plain} \markboth{M. K. Brun}{An adaptive iterative scheme for phase field brittle fracture propagation.}

\section{Introduction}
Fracture propagation is currently an important topic with many applications in 
various engineering fields. Specifically, phase-field descriptions are intensively investigated. The theory of brittle fracture mechanics goes back to the works of A. Griffith~\cite{griffith1921vi}, wherein a criterion for crack propagation is formulated. Despite a foundational treatment on the subject of brittle fracture, Griffith's theory fails to predict crack initiation. This deficiency can however be overcome by a variational approach, which was first proposed in~\cite{BourFraMar00,francfort1998revisiting}. Using such a variational approach, discontinuities in the displacement field $u$ across the lower-dimensional crack surface are approximated 
by an auxiliary phase-field function $\varphi$.
The latter can be viewed as an indicator function, which
introduces a diffusive transition zone between the broken and the unbroken material. 
The enforcement of irreversibility of crack growth finally 
yields a variational inequality system, of
which we seek the solution $\{u, \phi\}$.

In this work, we concentrate on improvements 
of the nonlinear solution algorithm, which is still 
a large bottleneck of phase-field fracture evolution problems.
Specifically, high iteration numbers when the crack initiates or is further growing are reported in 
many works~\cite{GeLo16,MesBouKhon15,Wi17_SISC,wick2017modified}. 
However, in most studies iteration numbers are omitted. Both staggered (splitting) schemes and monolithic schemes are frequently employed.
Important developments include 
alternating minimization/staggered schemes~\cite{Bour07,BourFraMar08,BuOrSue10,MesBouKhon15,miehe2010phase}, 
quasi-monolithic scheme with 
a partial linearization~\cite{HeWheWi15}, and fully monolithic 
schemes~\cite{GeLo16,Wi17_SISC,wick2017modified}.

The goal of this work is to propose 
a~\emph{linearized staggered scheme with stabilizing parameters}. In particular, the proposed scheme is based on recent developments on~\emph{iterative splitting schemes} coming from poroelasticity~\cite{castelletto2015accuracy, kim2009stability, mikelic2014numerical, mikelic2013convergence}. Iterative splitting schemes are widely applied to problems of coupled flow and mechanics, where at each iteration step either of the subproblems (i.e., flow or mechanics) is solved first, keeping some physical quantity constant (e.g., \emph{fixed stress} or \emph{fixed strain}), followed by solving the next subproblem with updated solution information. This procedure is then repeated until an accepted tolerance is reached. Further extensions of this technique involves tuning some artificial stabilization terms according to a derived contraction estimate in energy norms. Here, the quantity held constant during solving of the subproblems need not represent any physical quantity present in the model. This is the central idea in the so-called `$L$-scheme', which has proven to perform robustly for Richards equation~\cite{list2016study, MR2079503}, for linear and nonlinear poroelasticity~\cite{MR3827264,both2017robust}, and for nonlinear thermo-poroelasticity~\cite{2019arXiv190205783K}.

We propose here a variant of the $L$-scheme, adapted to phase field brittle fracture propagation models. This scheme is based on a partitioning of the model into two subproblems; \emph{phase field} and \emph{mechanics}. Here, the $L$-scheme acts both as a~\emph{stabilization} and as a~\emph{linearization} (as a linearization scheme, the stabilization parameters mimics the Jacobian from Newton iteration). Assuming that the mechanical elastic energy remains bounded during the iterations, and that the diffusive zone around crack surfaces is sufficiently thick, we give a proof of monotonic convergence of the proposed scheme by employing a fixed point argument. 


The efficiency and robustness of the proposed scheme is demonstrated numerically with several bench-mark problems. Moreover, we compare the number of iterations needed for convergence with `standard' staggered schemes (i.e., without stabilizing terms), and monolithic schemes in which the fully-coupled system is solved all-at-once. Furthermore, it is well known that when reaching the critical loading steps during the computation of brittle fracture phase field problems (i.e., when the crack is propagating), spikes in iteration numbers appear. For this reason, and thanks to the monotonic convergence property of the proposed scheme, we show that a (low) upper bound on the number of iterations may be enforced, while the computed results are still in very good agreement with the non-truncated solutions. Thus, using this `truncated $L$-scheme', we effectively avoid the iteration spikes at the critical loading steps at the cost of negligible loss of accuracy. We mention that this strategy is not available with e.g. Newton iteration, as the iterate solutions may behave erratically for any number of iterations before finally converging. Moreover, the assumption that the mechanical elastic energy remains bounded during the iterations is verified numerically for all tests cases.


\emph{The main aims of this work are three-fold:} Under a natural assumption, we prove the convergence of a novel iterative staggered scheme, optimized for phase field brittle fracture propagation problems. Based on these theoretical findings, we design a robust solution algorithm with monotonic convergence properties. Finally, several numerical tests are presented in which our variants of the $L$-scheme are tested in detail. 

The outline of this paper is as follows: In Section~\ref{sec:governingequations} we present the model equations and coefficients, in Section~\ref{sec:iterativescheme} we introduce the partitioned scheme and derive a convergence proof, in Section~\ref{sec:algorithms} we describe in detail our numerical algorithm in pseudo-code, and in Section~\ref{sec_tests} we provide several numerical experiments, in particular the~\emph{single edge notched tension test}, the~\emph{single edge notched shear test}, and the \emph{L-shaped panel test}. Finally, in Section~\ref{sec:conclusions} we provide some conclusions and summary of the work.

\subsection{Preliminaries}\label{sec:preliminaries}
In this section we explain the notation used throughout this article, see e.g.~\cite{evans1998partial, yosida1995functional} for more details. Given an open and bounded set $B \subset \real^d$, $d \in \{2,3\}$, and $1\leq p< \infty$, let $L^p(B) = \{f:B \rightarrow \real : \int_B |f(x)|^p \dx < \infty \}$. For $p = \infty$, let $L^\infty(B) = \{ f:B \rightarrow \real : \ess \sup_{x \in B} |f(x)| < \infty \}$. In particular, $L^2(B)$ is the Hilbert space of square integrable functions with inner product $(\cdot, \cdot)$ and norm $\norm{f} : = (f,f)^{\frac{1}{2}}$. For $k \in \nat$, $k \geq 0$, we denote by $W^{k,p}(B)$ the space of functions in $L^p(B)$ admitting weak derivatives up to $k$'th order. In particular, $H^1(B) := W^{1,2}(B)$ and we denote by $H_0^1(B)$ its zero trace subspace. 

Note that we reserve the use of bold fonts for second order tensors. Hence, if $u,v \in L^2(B)$, their inner product is $(u,v) := \int_B u(x)v(x)\dx$, and similarly, if $u,v \in (L^2(B))^d$ then we take their inner product to be $(u,v) := \int_B u(x) \cdot v(x) \dx$. Finally, if $\bu, \bv \in (L^2(B))^{d\times d}$ then their inner product is $(\bu, \bv) := \int_B \bu(x) : \bv(x) \dx$. 

We will also frequently apply several classical inequalities, in particular: \emph{Cauchy-Schwarz, Young, Poincar\'e}, and \emph{Korn}. See e.g.~\cite{MR1765047, hardy1967inequalities} for a detailed description of these.


\section{Governing equations}\label{sec:governingequations}
What follows is a brief description of the phase field approach for quasi-static brittle fracture propagation, see e.g.~\cite{francfort1998revisiting, miehe2010phase} for more details. Consider a (bounded open) polygonal domain $B \subset \real^d$, wherein $\Ccal \subset \real^{d-1}$ denotes the fracture, and $\Omega \subset \real^d$ is the intact domain, and a time interval $(0,T)$ is given with final time $T>0$. By introducing the phase field variable $\phi : B  \times (0,T) \rightarrow [0,1]$, which takes the value $0$ in the fracture, $1$ in the intact domain, and varies smoothly from $0$ to $1$ in a transition zone of (half-)thickness $\epsilon > 0$ around $\Ccal$, the evolution of the fracture can be tracked in space and time. Using the phase field approach, the fracture $\Ccal$ is approximated by $\Omega_F \subset \real^d$, where $\Omega_F := \{ x \in \real^d : \phi(x) < 1\}$.

Introducing the displacement vector $u : B \times (0,T) \rightarrow \real^d$, the model problem we consider arises as a minimization problem: An energy functional $E(u,\phi)$ is defined according to Griffith's criterion for brittle fracture \cite{griffith1921vi}, which is then sought to be minimized over all admissible $\{u,\phi\}$. From this minimization problem, the Euler-Lagrange equations are obtained by differentiation with respect to the arguments, yielding a variational equality system. Finally, a crack irreversibility condition must be enforced (the crack is not allowed to heal), which takes the form $\pt \phi \leq 0$. Thus, the variational equality system, which is the previously mentioned Euler-Lagrange equations, is transformed into a variational inequality system, which reads as follows: Find $(u(t),\phi(t)) \in V\times W :=  (H_0^1(B))^d \times W^{1,\infty}(B)$ such that for $t \in (0,T]$ there holds
\bse
\begin{alignat}{2}
\left( g(\phi) \bC \be(u), \be(v) \right) &= (b,v), &\forall v \in V, \label{var2} \\
G_c \epsilon(\nabla \phi, \nabla \psi) - \frac{G_c}{\epsilon} (1 - \phi, \psi) + (1-\kappa)(\phi |\bC \be(u)|^2,\psi) &\geq 0, &\forall \psi \in W,  \label{var1} 
\end{alignat}
\ese
where $G_c > 0$ is the critical elastic energy restitution rate, $0 < \kappa << 1$ is a regularization parameter, the purpose of which is to avoid degeneracy of the elastic energy (equivalent with replacing the fracture with a softer material), and $g(\phi) := (1-\kappa)\phi^2 + \kappa$ is a standard choice for the degradation function (see e.g.~\cite{MR3778706,wick2017modified}. Note that $g(\phi) \rightarrow \kappa$ when approaching the fracture zone). The body force acting on the domain $B$ is $b : B\times (0,T) \rightarrow \real^d$, and $|\bC \be(u)|^2 := \bC \be(u):\be(u)$ is the elastic mechanical energy, where $\be(\cdot) := (\nabla(\cdot) + \nabla(\cdot)^\top)/2$ is the symmetric gradient, and $\bC = [C_{ijkl}]_{ijkl}$ is the fourth order tensor containing the elastic material coefficients, where each $C_{ijkl} \in L^\infty(B)$. We assume that $\bC$ satisfies the usual~\emph{symmetry} and~\emph{positive definiteness} properties, i.e., $(\bC \bu, \bv) = (\bu, \bC\bv)$, and $(\bC \bu, \bu)^{\frac{1}{2}}$ defines an $L^2$-equivalent norm, i.e., there exists constants $\lambda_m, \lambda_M > 0$ such that
\begin{equation}\label{bCprop}
\lambda_m \norm{\bu} \leq (\bC \bu, \bu)^{\frac{1}{2}} \leq  \lambda_M \norm{\bu}, \quad \text{ for } \quad \bu, \bv \in (L^2(B))^{d\times d}, \ \bu, \bv \neq \mathbf{0}.
\end{equation}
 
In order to facilitate the following developments we assume continuity in time for $\{u,\phi,b\}$. Let now $0 = t^0 < t^1 < \cdots < t^N = T$ be a partition of the time interval $(0,T)$, with time step $\delta t := t^n - t^{n-1}$, and denote the time discrete solutions by 
\begin{align}
u^n &:= u(\cdot, t^n), \\
\phi^n &:= \phi(\cdot, t^n).
\end{align}
The irreversibility condition now becomes $\phi^n \leq \phi^{n-1}$ (using a backward Euler method), and the time-discrete version of the problem \eqref{var2}--\eqref{var1} reads as follows: Find $(u^n, \phi^n) \in V \times W$ such that 
\bse 
\begin{alignat}{2}
&\left( g(\phi^n) \bC \be(u^n), \be(v) \right) = (b^n,v),  &\qquad \forall v \in V, \label{ge2} \\
\nonumber &\quad G_c \epsilon(\nabla \phi^n, \nabla \psi) - \frac{G_c}{\epsilon} (1 - \phi^n, \psi) 
+ (1-\kappa)(\phi^n |\bC \be(u^n)|^2,\psi) &\\
&\qquad+ ([\Xi + \gamma (\phi^n - \phi^{n-1})]^{+}, \psi) = 0, \quad&\qquad \forall \psi \in W,  \label{ge1} 
\end{alignat}
\ese
where $b^n := b(\cdot, t^n)$. The last term in the phase field equation~\eqref{ge1} is a penalization to enforce the irreversibility condition, thus transforming the variational inequality \eqref{var1} into a variational equality, with penalization parameter $\gamma > 0$, and where $\Xi \in L^2(B)$ is given (in practice $\Xi$ will be obtained by iteration, cf. Section~\ref{sec:algorithms}). Note that we also used the notation $[x]^{+} := \max(x,0)$. From here on, we shall refer to \eqref{ge2} as the~\emph{mechanics subproblem}, and to \eqref{ge1} as the~\emph{phase field subproblem}. Regarding the degradation function $g$, it is easily seen to satisfy the following Lipschitz condition:
\begin{equation}\label{gassumptions}
\norm{g(\psi) - g(\eta)} \leq 2(1-\kappa) \norm{\psi - \eta}, \quad \forall \psi, \eta \in W. 
\end{equation}
The time-discrete system \eqref{ge2}-\eqref{ge1} was analyzed in~\cite{neitzel2017optimal}, and there it was shown that at least one global minimizer $(u^n, \phi^n) \in V \times W$ exists, provided $b^n \in (L^2(B))^d$, for each $n$. We mention also that the analysis of a pressurized phase field brittle fracture model can be found in~\cite{MiWheWi15b,MiWheWi19}.
\section{Iterative scheme}\label{sec:iterativescheme}
In this section we introduce the iterative staggered solution procedure for the fully discrete formulation of \eqref{ge2}-\eqref{ge1}. To this end, let $\Tcal_h$ be a simplicial mesh of $B$, such that for any two distinct elements of $\Tcal_h$ their intersection is either an empty set or their common vertex or edge. We denote by $h$ the largest diameter of all the elements in $\Tcal_h$, i.e., $h := \max_{K \in \Tcal_h} \diam(K)$, and let $V_h \times W_h \subset V\times W$ be appropriate (conforming) discrete spaces. We continue now with the same notation for the variables and test functions as before (omitting the usual $h$-subscript), since we will from here on mostly deal with the discrete solutions.

For each $n$, the iterative algorithm we propose defines a sequence $\{u^{n,i}, \phi^{n,i}\}$, for $i\geq 0$, initialized by $\{u^{n-1}, \phi^{n-1}\}$. The iteration is then done in two steps: First, the mechanics subproblem is solved, with the degradation function held constant. Then, the phase field subproblem is solved, with the elastic energy held constant. Note that there are also artificial stabilizing terms which are held constant during solving of the subproblems. Introducing the stabilization parameters $L_u, L_\phi > 0$ (to be determined later), the iterative algorithm reads as follows:
\bse
\begin{align}
\nonumber &\textnormal{\textbullet \ \textbf{Step 1}: Given $(u^{n,i-1}, \phi^{n,i-1}, b^n)$ find $u^{n,i}$ such that} \\
&\qquad {a_u(u^{n,i},v)} := L_u(u^{n,i} - u^{n,i-1},v) + \left( g(\phi^{n,i-1}) \bC \be(u^{n,i}), \be(v) \right) =(b^n, v),  &\forall v \in V_h.  \label{iter2} \\
\nonumber &\textnormal{\textbullet \ \textbf{Step 2}: Given $(\phi^{n,i-1}, u^{n,i}, \phi^{n-1})$ find $\phi^{n,i}$ such that} \\
\nonumber &\qquad {a_\phi(\phi^{n,i},\psi)} := L_\phi (\phi^{n,i} - \phi^{n,i-1},\psi) + G_c \epsilon(\nabla \phi^{n,i}, \nabla \psi) - \frac{G_c}{\epsilon} (1 - \phi^{n,i}, \psi) &\\
&\qquad \qquad+ (1-\kappa)(\phi^{n,i} |\bC \be(u^{n,i})|^2,\psi) 
+ (\eta^i (\Xi + \gamma (\phi^{n,i} - \phi^{n-1})), \psi)= 0, &\forall \psi \in W_h, \label{iter1} 
\end{align}
\ese
where, in order to avoid the $[\cdot]^+$-bracket, we also introduced the function $\eta^i \in L^\infty(B)$ defined for a.e. $x \in B$ by
\begin{equation}
\eta^i(x) = 
\begin{cases}
1, \quad \textnormal{ if } \ \Xi(x) + \gamma(\phi^{n,i}(x) - \phi^{n-1}(x)) \geq 0, \\
0, \quad \textnormal{ if } \ \Xi(x) + \gamma(\phi^{n,i}(x) - \phi^{n-1}(x)) < 0.
\end{cases}
\end{equation}
\subsection{Convergence analysis}
We now proceed to analyze the convergence of the scheme \eqref{iter2}-\eqref{iter1}. Our aim is to show a contraction of successive difference functions in energy norms, which implies convergence by the Banach Fixed Point Theorem (see e.g.~\cite{cheney2013analysis}). To this end we define the following difference functions 
\begin{align}
e_u^i &:= u^{n,i} - u^n,\\
e_\phi^i &:= \phi^{n,i} - \phi^n,
\end{align}
where $\{u^n, \phi^n\}$ denotes the (exact) solutions to \eqref{var2}-\eqref{var1} at time $t^n$. Using the symmetry properties of $\bC$, the following set of difference equations are then obtained by subtracting \eqref{iter2}-\eqref{iter1} solved by $\{u^n, \phi^n\}$ from the same equations solved by the iterate solutions:
\bse
\begin{alignat}{2}
&L_u(e_u^i - e_u^{i-1}, v) + (g(\phi^{n})\bC \be(e_u^i), \be(v)) + ((g(\phi^{n,i-1}) - g(\phi^n))\bC \be(u^{n,i}), \be(v)) =0, &\forall v \in V_h. \label{itere2} \\
\nonumber &L_\phi(e_\phi^i - e_\phi^{i-1}, \psi) + G_c \epsilon (\nabla e_{\phi}^i, \nabla \psi) 
+ \frac{G_c}{\epsilon}(e_\phi^i, \psi) + \gamma (\eta^ie_\phi^i, \psi) 
+ (1-\kappa)(e_\phi^{i} |\bC \be(u^{n,i})|^2, \psi) \\
&\qquad \qquad \quad \ + (1 - \kappa)\left(\phi^n \bC \be(e_u^{i}):\be(u^{n,i} + u^{n}), \psi \right) 
 = 0, &\forall \psi \in W_h. \label{itere1}
\end{alignat}
\ese
Furthermore, we introduce the following assumption related to the elastic mechanical strain.
\begin{assum}[Boundedness of elastic strain]\label{hypo}
We assume there exists a constant $M>0$ such that
\begin{equation}
\ess \sup_{x\in B} |\be(u^{n}(x))| \leq M, \quad \forall n.
\end{equation}
Moreover, we assume that $M$ is large enough such that the above bound holds also for the iterate elastic strain, i.e.,
\begin{equation}
\ess \sup_{x\in B} |\be(u^{n,i}(x))| \leq M, \quad \forall (n,i).
\end{equation}
\end{assum}
Note that $M$ is nothing else than an upper bound for the elastic strain in the system for the converged solution, which is arguably finite for any reasonable problem. Note also that with sufficient regularity of the domain, coefficients, source terms, and initial data, the above assumption is satisfied, i.e., the problem \eqref{ge2}-\eqref{ge1} admits a solution $u^n \in (W^{1,\infty}(B))^d$, thus implying the existence of $M$. Alternatively to introducing the constant $M$, we could introduce instead a so-called `cut-off operator' in the iterate equations \eqref{iter2}-\eqref{iter1}, as seen in e.g. \cite{MR2039576, MR2116915}. Note that in all numerical tests to be done in the next sections, we provide figures validating the second part of this assumption (cf. Section~\ref{verify}). With the above definitions, we state our main theoretical result.
\begin{theorem}[Convergence of the scheme]\label{convthm}
The scheme~\eqref{iter1}--\eqref{iter2} defines a contraction satisfying
\begin{align}\label{convrate}
\nonumber &\left(\frac{L_\phi}{2} + \frac{G_c}{\epsilon} + \frac{G_c \epsilon}{c_P} - 8\xi\frac{(1-\kappa)^2}{\kappa} \right) \norm{e_\phi^i}^2 
 + \left(\frac{L_u}{2} + \frac{\kappa \lmin^2}{2c_Pc_K}\right)\norm{e_u^{i}}^2 \\
&\qquad \leq \left(\frac{L_\phi}{2} + 8\xi\frac{(1-\kappa)^2}{\kappa}\right) \norm{e_\phi^{i-1}}^2
+ \frac{L_u}{2} \norm{e_u^{i-1}}^2,
\end{align}
if $L_u, L_\phi > 0$, and if the model parameter $\epsilon > 0$ is sufficiently large such that
\begin{equation}
\epsilon^2 - 16\xi\frac{(1-\kappa)^2}{\kappa}\frac{c_P}{G_c}\epsilon + c_P > 0,
\end{equation}
where $\xi := (M\lmax/\lmin)^2 > 0$, and where $c_P, c_K > 0$ are the Poincar\'e and Korn constants, respectively, depending only on the domain $B$ and spatial dimension $d$.
\end{theorem}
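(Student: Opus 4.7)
The plan is to test the difference equations~\eqref{itere2} and~\eqref{itere1} against the iteration errors themselves and assemble the resulting two energy inequalities. Throughout, the $L_u$ and $L_\phi$ stabilization contributions are handled by the polarization identity $2(a-b,a) = \norm{a}^2 - \norm{b}^2 + \norm{a-b}^2$, while Young's inequality will be used to reabsorb the cross-coupling between the mechanics and phase field errors into the coercive parts of the two bilinear forms.

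First, I would test~\eqref{itere2} with $v = e_u^i$. The stabilization term produces $\frac{L_u}{2}\bigl(\norm{e_u^i}^2 - \norm{e_u^{i-1}}^2 + \norm{e_u^i - e_u^{i-1}}^2\bigr)$; the principal bilinear form is bounded below by $\kappa \lmin^2 \norm{\be(e_u^i)}^2$ using $g(\phi^n) \geq \kappa$ together with~\eqref{bCprop}; and the coupling term $((g(\phi^{n,i-1}) - g(\phi^n))\bC\be(u^{n,i}), \be(e_u^i))$ is moved to the right-hand side, where the explicit form $g(\phi) = (1-\kappa)\phi^2 + \kappa$ together with boundedness of the phase field yields the pointwise Lipschitz estimate $|g(\phi^{n,i-1}) - g(\phi^n)| \leq 2(1-\kappa)|e_\phi^{i-1}|$, Assumption~\ref{hypo} produces $|\bC\be(u^{n,i})| \leq \lmax M$ pointwise, and Cauchy--Schwarz delivers a bound of order $(1-\kappa)\lmax M\,\norm{e_\phi^{i-1}}\,\norm{\be(e_u^i)}$.

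Next, I would test~\eqref{itere1} with $\psi = e_\phi^i$. Polarization applied to the $L_\phi$-term gives $\frac{L_\phi}{2}\bigl(\norm{e_\phi^i}^2 - \norm{e_\phi^{i-1}}^2 + \norm{e_\phi^i - e_\phi^{i-1}}^2\bigr)$; the diffusive term $G_c\epsilon\norm{\nabla e_\phi^i}^2$ is converted into the lower bound $(G_c\epsilon/c_P)\norm{e_\phi^i}^2$ via Poincar\'e; the mass term contributes $(G_c/\epsilon)\norm{e_\phi^i}^2$; and the penalty term $\gamma(\eta^i e_\phi^i, e_\phi^i)$ and the diagonal elastic term $(1-\kappa)(e_\phi^i|\bC\be(u^{n,i})|^2, e_\phi^i)$ are non-negative and dropped. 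The remaining cross-coupling $(1-\kappa)(\phi^n \bC\be(e_u^i):\be(u^{n,i} + u^n), e_\phi^i)$ is controlled by Cauchy--Schwarz using $|\phi^n|\leq 1$ and $|\be(u^{n,i}+u^n)| \leq 2M$ from Assumption~\ref{hypo}, giving an estimate of order $(1-\kappa)\lmax M\,\norm{\be(e_u^i)}\,\norm{e_\phi^i}$.

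Adding the two inequalities, I would apply Young's inequality to each of the two cross-coupling terms with weights chosen so that the $\norm{\be(e_u^i)}^2$ pieces that land on the right sum to at most $\frac{\kappa\lmin^2}{2}\norm{\be(e_u^i)}^2$; the surviving half of the coercive strain term is then converted to a displacement-norm bound through Korn (valid on $(H_0^1(B))^d$) followed by Poincar\'e, yielding the $\frac{\kappa\lmin^2}{2c_Pc_K}\norm{e_u^i}^2$ contribution in~\eqref{convrate}. Discarding the non-negative square-increment terms and collecting like terms then produces exactly~\eqref{convrate}. The stated condition on $\epsilon$ arises from requiring that the left-hand coefficient of $\norm{e_\phi^i}^2$ dominate the right-hand coefficient of $\norm{e_\phi^{i-1}}^2$ (the analogous inequality for $\norm{e_u^i}^2$ vs $\norm{e_u^{i-1}}^2$ being automatic); multiplying through by $\epsilon c_P/G_c$ yields the stated quadratic in $\epsilon$. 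The delicate step is the bookkeeping in this combination: the Young weights must be selected so that the numerical constants line up with~\eqref{convrate}, and one must verify that the Poincar\'e estimate invoked on $e_\phi^i$ is legitimate despite the phase field space carrying only natural boundary data.
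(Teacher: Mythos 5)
Your proposal follows the paper's own proof essentially step for step: test the difference equations with the errors themselves, apply the polarization identity to the $L_u,L_\phi$ terms, use the Lipschitz bound on $g$ together with Assumption~\ref{hypo} and Cauchy--Schwarz to control the two cross-coupling terms, absorb them via Young with weights chosen to leave $\tfrac{\kappa\lmin^2}{2}\norm{\be(e_u^i)}^2$ coercive, pass to $\norm{e_u^i}^2$ and $\norm{e_\phi^i}^2$ by Korn and Poincar\'e, and read off the contraction condition as a quadratic in $\epsilon$. The caveat you flag about applying Poincar\'e to the phase-field error, which carries no Dirichlet data, is a real subtlety that the paper's proof also leaves unaddressed, but otherwise the two arguments coincide.
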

\begin{proof}
We begin by taking $v = e_u^i$ and $\psi = e_\phi^i$ in \eqref{itere2} and \eqref{itere1}, respectively, add the resulting equations together and obtain
\begin{align}
\nonumber &\left(\frac{L_\phi}{2} + \frac{G_c}{\epsilon}\right) \norm{e_\phi^i}^2 + \frac{L_\phi}{2} \norm{e_\phi^i - e_\phi^{i-1}}^2 + G_c\epsilon \norm{\nabla e_\phi^i}^2 + \gamma(\eta^i e_\phi^i, e_\phi^i)\\
\nonumber &\quad + (1-\kappa)(e_\phi^i |\bC \be(u^{n,i})|^2, e_\phi^i) + \frac{L_u}{2}\norm{e_u^i}^2 + \frac{L_u}{2} \norm{e_u^i - e_u^{i-1}}^2 + (g(\phi^n) \bC \be(e_u^i),\be(e_u^i))\\
\nonumber&\qquad= \frac{L_\phi}{2} \norm{e_\phi^{i-1}}^2 + \frac{L_u}{2} \norm{e_u^{i-1}}^2 - (1-\kappa)(\phi^n \bC \be(e_u^{i}):\be(u^{n,i} + u^n), e_\phi^i) \\
&\qquad \quad - ((g(\phi^{n,i-1}) - g(\phi^n))\bC \be(u^{n,i}), \be(e_u^{i})), \label{est1}
\end{align}
where we used the following inner product identity
\begin{equation}
2(x-y,x) = \norm{x}^2 + \norm{x-y}^2 - \norm{y}^2.
\end{equation}
Discarding some non-negative terms from the left hand side of \eqref{est1}, using the fact that $\ess \sup_{x\in B} \phi^n(x) \leq 1$, in addition to the Lipschitz property of the degradation function $g$ \eqref{gassumptions}, yields
\begin{align}
\nonumber &\left(\frac{L_\phi}{2} + \frac{G_c}{\epsilon}\right) \norm{e_\phi^i}^2 
 + G_c\epsilon\norm{\nabla e_\phi^i}^2
+ \frac{L_u}{2}\norm{e_u^i}^2 +  \kappa (\bC\be(e_u^i),\be(e_u^i))\\
\nonumber&\quad \leq \frac{L_\phi}{2} \norm{e_\phi^{i-1}}^2 + \frac{L_u}{2} \norm{e_u^{i-1}}^2 + (1-\kappa) \int_B |\bC \be(e_u^{i}):\be(u^{n,i} + u^n) e_\phi^i|\dx \\
\nonumber&\qquad+ \int_B |(g(\phi^{n,i-1}) - g(\phi^n)) \bC \be(u^{n,i}):\be(e_u^i)|\dx\\
&\qquad \quad \leq \frac{L_\phi}{2} \norm{e_\phi^{i-1}}^2 + \frac{L_u}{2} \norm{e_u^{i-1}}^2 + 
2(1-\kappa)\lmax M\Big(\norm{e_\phi^i} + \norm{e_\phi^{i-1}}\Big)\norm{\be(e_u^i)},\label{est2}
\end{align}
where we also invoked the Assumption~\ref{hypo} in the last line, and applied the Cauchy-Schwarz inequality. Using the Young inequality, the properties of elastic tensor \eqref{bCprop}, and rearranging, leads to
\begin{align}
\nonumber &\left(\frac{L_\phi}{2} + \frac{G_c}{\epsilon} - 2(1-\kappa)\lmax M\frac{1}{2\delta_1}\right) \norm{e_\phi^i}^2 
 + G_c\epsilon\norm{\nabla e_\phi^i}^2\\
\nonumber&\quad+ \frac{L_u}{2}\norm{e_u^i}^2 + \Big(\kappa \lmin^2 - 2(1-\kappa)\lmax M(\delta_1 + \delta_2) \Big) \norm{\be(e_u^i)}^2\\
&\qquad \leq \left(\frac{L_\phi}{2} + 2(1-\kappa)\lmax M \frac{1}{2\delta_2}\right)\norm{e_\phi^{i-1}}^2
+ \frac{L_u}{2} \norm{e_u^{i-1}}^2,\label{est3}
\end{align}
for some constants $\delta_1, \delta_2 > 0$. Choosing $\delta_1 = \delta_2 = \kappa \lmin^2 / 8(1-\kappa)\lmax M$ yields \eqref{est3} as 
\begin{align}
\nonumber &\left(\frac{L_\phi}{2} + \frac{G_c}{\epsilon} - 8\xi\frac{(1-\kappa)^2}{\kappa}\right) \norm{e_\phi^i}^2 
 + G_c\epsilon\norm{\nabla e_\phi^i}^2
 + \frac{L_u}{2}\norm{e_u^i}^2 +  \frac{\kappa \lmin^2}{2} \norm{\be(e_u^i)}^2\\
&\qquad \leq \left(\frac{L_\phi}{2} + 8\xi\frac{(1-\kappa)^2}{\kappa}\right) \norm{e_\phi^{i-1}}^2
+ \frac{L_u}{2} \norm{e_u^{i-1}}^2.\label{est4}
\end{align}
Next, by applying the Poincar\'e inequality on $\norm{e_\phi^{i}}$, and by applying successively the Poincar\'e and Korn inequalities on $\norm{e_u^{i}}$, we obtain
\begin{equation}\label{est5}
\norm{e_\phi^{i}}^2 \leq c_P \norm{\nabla e_\phi^{i}}^2 \quad \text{ and } \quad 
\norm{e_u^{i}}^2 \leq c_P c_K\norm{\be(e_u^{i})}^2,
\end{equation}
where $c_P, c_K$ are the (squares of the) Poincar\'e and Korn constants, respectively (depending only on the domain $B$ and spatial dimension $d$). Finally, employing these bounds on the left hand side of \eqref{est4} yields 
\begin{align}
\nonumber &\left(\frac{L_\phi}{2} + \frac{G_c}{\epsilon} + \frac{G_c \epsilon}{c_P} - 8\xi\frac{(1-\kappa)^2}{\kappa} \right) \norm{e_\phi^i}^2 
 + \left(\frac{L_u}{2} + \frac{\kappa \lmin^2}{2c_Pc_K}\right)\norm{e_u^{i}}^2 \\
&\qquad \leq \left(\frac{L_\phi}{2} + 8\xi\frac{(1-\kappa)^2}{\kappa}\right) \norm{e_\phi^{i-1}}^2
+ \frac{L_u}{2} \norm{e_u^{i-1}}^2.\label{est6}
\end{align}
Thus, for \eqref{est6} to be a contraction estimate, $\epsilon$ must satisfy the following second order inequality
\begin{equation}\label{epsineq}
P(\epsilon) := \epsilon^2 - 16\xi\frac{c_P}{G_c}\frac{(1-\kappa)^2}{\kappa}\epsilon + c_P > 0.
\end{equation}
Setting the left hand side of \eqref{epsineq} equal to zero yields a second order polynomial, the discriminant of which must satisfy one of the following three statements:
\begin{enumerate}
\item
If $$64\xi^2\dfrac{(1-\kappa)^4}{\kappa^2} > \frac{G_c^2}{c_P},$$ then $P(\epsilon) = 0$ has two distinct positive real roots $\epsilon_1, \epsilon_2 > 0$, in which case \eqref{est6} is a contraction for $\epsilon \in (0,\epsilon_1) \cup (\epsilon_2,\infty)$.
\item
If $$64\xi^2\dfrac{(1-\kappa)^4}{\kappa^2} = \frac{G_c^2}{c_P},$$ then $P(\epsilon) = 0$ has one positive real root, $\epsilon_0 > 0$, of multiplicity two, in which case \eqref{est6} is a contraction for all $\epsilon \neq \epsilon_0, \epsilon > 0$.
\item
If $$64\xi^2\dfrac{(1-\kappa)^4}{\kappa^2} < \frac{G_c^2}{c_P},$$ then $P(\epsilon) = 0$ has two complex roots, in which case \eqref{est6} is a contraction for all $\epsilon > 0$.
\end{enumerate}
\end{proof}
\begin{remark}[Convergence rate]
According to the above proof, if the scheme is not converging for a given value of $\epsilon$, then a larger or a smaller value may be chosen to rectify the situation. However, since crack surfaces become singular as $\epsilon \rightarrow 0$ (thus necessitating finer meshing, i.e., $h \rightarrow 0$), we choose to state Theorem~\ref{convthm} with the condition that $\epsilon$ be large enough. We note also that due to some unknown constants in the convergence rate~\eqref{convrate}, it is not known whether this rate is optimal. Furthermore, working with a large $\epsilon$
is substantiated by the theory of phase field fracture being based on
$\Gamma$ convergence~\cite{AT90,AT92}. Applying this
to phase field fracture was first done in~\cite{BourFraMar00}.
Specifically, the setting is suitable when $h = o(\epsilon)$; namely when
$\epsilon$ is sufficiently large.
\end{remark}
\section{Algorithm}\label{sec:algorithms}
In practice, we apply the stabilizations and penalizations proposed in the previous sections
as outlined below. It is well-known (e.g., \cite{NoWri06}) that the choice of $\gamma$ is 
critical. If $\gamma$ is too low, crack irreversibility will not be enforced.
On the other hand, if $\gamma$ is too large, the linear equation system is 
ill-conditioned and influences the performance of the nonlinear solver. For this reason, $\gamma$ is 
updated in at each iteration step. Better, in terms of robustness, is the augmention in such 
an iteration by an additional $L^2$ function $\Xi$, yielding a so-called 
\emph{augmented Lagrangian iteration} going back to \cite{FoGlo83,GloTa89}. For phase-field 
fracture this idea was first applied in \cite{WheWiWo14}. 
Thus, combining the staggered iteration for the solid and phase-field
  systems with the update of the penalization parameter $\Xi$ yields the following algorithm:


\begin{Algo}
\begin{algorithmic}
\State At the loading step $t^n$. 
\State Choose initial $\Xi^0$. Set $\gamma>0$.
\Repeat
\State Iterate on $i$ (augmented Lagrangian loop)
\State Solve two-field problem, namely
\State\;\;\;\;   Solve elasticity in Problem \eqref{iter2}
\State\;\;\;\;   Solve the nonlinear phase-field in Problem \eqref{iter1}
\State Update 
\[
\Xi^{i+1} = [\Xi^{i} + \gamma (\phi^{n,i+1} - \phi^{n-1})]^+
\]
\Until 
\begin{equation}
\label{tol_1}
{
\max(\|a_u(u^{n,i},v_k) - (b^n,v_k)\|,\|a_\phi(\phi^{n,i},\psi_l)\|) \leq
\operatorname{TOL},
} 
\end{equation}
{
\[
\text{for } k=1,\ldots, \text{dim}(V_h), \quad l=1,\ldots, \text{dim}(W_h).
\]
}
\State Set: $(u^n,\phi^n):= (u^{n,i}, \phi^{n,i})$.
\State Increment $t^n \rightarrow t^{n+1}$.
\end{algorithmic}
\end{Algo}
For the stabilization parameters $L_u, L_\phi$, we have the following requirements (somewhat similar to $\gamma$): If the stabilization is too small, the stabilization effects vanish. If the stabilization is too large, we revert to an unacceptably slow convergence, and potentially, may converge to a solution corresponding to an undesirable local minimum of the original problem. In order to deal with these issues, we employ here a simple, yet effective strategy: We draw $L := L_u = L_\phi$ from a range of suitable values and compare the results, i.e., $L \in \{1.0e-6, 1.0e-3, 1.0e-2, 1.0e-1\}$. Moreover, we include also the configurations $L_u = 0$, $L_\phi > 0$ and $L_u = L_\phi = 0$ in all the numerical tests to be done in the following.
\begin{remark}
In this paper we use $\operatorname{TOL} = 10^{-6}$.
\end{remark}
\subsection{Nonlinear solution, linear subsolvers and programming code}
Both subproblems (phase field and mechanics) may be nonlinear. In our theory presented above,
we assumed a standard elasticity tensor. However, 
the model~\eqref{iter2}--\eqref{iter1} is too simple for most 
mechanical applications. 
More realistic 
phase-field fracture applications require a splitting of the stress tensor
(based on an energy split) in order to account 
for fracture development only under tension, but not under 
compressive forces. Consequently, we follow 
here \cite{MieWelHof10a} and split $\bsig$ into tensile 
$\bsig^+$ and compressive parts $\bsig^-$ :
\begin{align*}
\bsig^+ &:= 2\mu_s \be^+ + \lambda_s <\text{tr}(\be)> \bid,\\
\bsig^- &:= 2\mu_s (\be - \be^+) + \lambda_s \bigl(\text{tr}(\be) - <\text{tr}(\be)>\bigr) \bid,
\end{align*}
and 
\[
\be^+ = \bP \blambda^+ \bP^T,
\]
where the elasticity tensor $\bC$ has been replaced by the Lam\'e parameters, $\mu_s$ and $\lambda_s$. Moreover, $\bid$ is the $d\times d$ identity matrix, and $<\cdot >$ is the positive part of a function.
In particular, for $d=2$, we have
\[
\blambda^+ := \blambda^+ (u):=
\begin{pmatrix}
<\lambda_1(u) > & 0 \\
0 & <\lambda_2(u) >
\end{pmatrix}
,
\]
where $\lambda_1(u)$ and $\lambda_2(u)$ are 
the eigenvalues of the strain tensor $\be := \be(u)$,
and $v_1(u)$ and $v_2(u)$ the corresponding {(normalized)} 
eigenvectors.
Finally, the matrix $\bP$ is defined as $\bP:= \bP(u):= [v_1 | v_2]$;
namely, it consists of the column vectors $v_i, \ i=1,2$.
We notice that another frequently employed stress-splitting law was proposed in 
\cite{AmorMarigoMaurini2009}. 

The modified scheme reads:
\bse
\begin{align}
\nonumber &\textnormal{\textbullet \ \textbf{Step 1}: given $(u^{n,i-1}, \phi^{n,i-1}, b^n)$ find $u^{n,i}$ such that}\\
&\qquad L_u(u^{n,i} - u^{n,i-1},v) + \left( g(\phi^{n,i-1}) \bsig^+(u^{n,i}), \be(v) \right) 
+ \left( \bsig^-(u^{n,i}), \be(v) \right) =(b^n, v),  &\forall v \in V_h, \label{iter2_mod} \\
\nonumber&\textnormal{\textbullet \ \textbf{Step 2}: given $(\phi^{n,i-1}, u^{n,i}, \phi^{n-1})$ find $\phi^{n,i}$ such that} \\
\nonumber &\qquad L_\phi (\phi^{n,i} - \phi^{n,i-1},\psi) + G_c \epsilon(\nabla \phi^{n,i}, \nabla \psi) - \frac{G_c}{\epsilon} (1 - \phi^{n,i}, \psi) \\
&\qquad \qquad+ (1-\kappa)(\phi^{n,i} \bsig^+(u^{n,i}):\be(u^{n,i}),\psi) 
+ (\eta^i( \Xi + \gamma (\phi^{n,i} - \phi^{n-1})), \psi)= 0, &\forall \psi \in W_h. \label{iter1_mod} 
\end{align}
\ese
These modifications render the displacement system \eqref{iter2_mod} 
nonlinear, for which we use a Newton-type solver. The phase field equation is also nonlinear due to the 
penalization term and the stress splitting. 
Our version of Newton's method is based on a 
residual-based monotonicity criterion (e.g., \cite{Deuflhard2011})
outlined in \cite{wick2017modified}[Section 3.2].
Inside Newton's method, the linear subsystems are solved 
with a direct solver; namely UMFPACK \cite{DaDu97}. 
All numerical tests presented in Section \ref{sec_tests}
are implemented in the open-source finite element
library deal.II \cite{dealII85,BangerthHartmannKanschat2007}.
Specifically, the code is based on a simple adaptation of 
the multiphysics template \cite{Wi11_fsi_with_deal} in which specifically the previously 
mentioned Newton solver is implemented.

\section{Numerical experiments}
\label{sec_tests}


In this section, we present several numerical tests to 
substantiate our algorithmic developments. The goals of all three numerical examples are comparisons between an
  unlimited number of staggered iterations (although bounded by $500$) denoted by `$L$', and a low, fixed number, denoted by `$LFI$', where we use $30$ (Ex. 1 and Ex. 2), and $20$ (Ex. 3)
  staggered iterations, respectively. These comparisons are performed in
  terms of the number of iterations and the correctness of the solutions in
  terms of the so-called~\emph{load-displacement curve}, measuring the stresses of the
top boundary versus the number of loading steps.


\subsection{Single edge notched tension test}
This test was applied for instance in \cite{MieWelHof10a}.
The configuration is displayed in Figure \ref{ex_1_config_and_mesh}.
We use the system \eqref{iter2}-\eqref{iter1}.
Specifically, we study our proposed iterative schemes on different 
mesh levels, denoted as refinement (Ref.) levels 4, 5, 6 (uniformly refined), with $1024$ elements ($2210$ Dofs for the
displacements, $1\,105$ Dofs for the phase-field, $h = 0.044$), 
$4\,096$ elements ($8\,514$ Dofs for the
displacements, $4257$ Dofs for the phase-field, $h = 0.022$),
and $16\,384$ elements ($33\,410$ Dofs for the
displacements, $16\,705$ Dofs for the phase-field, $h = 0.011$).

\begin{figure}[H]
\centering
{\includegraphics[width=5cm]{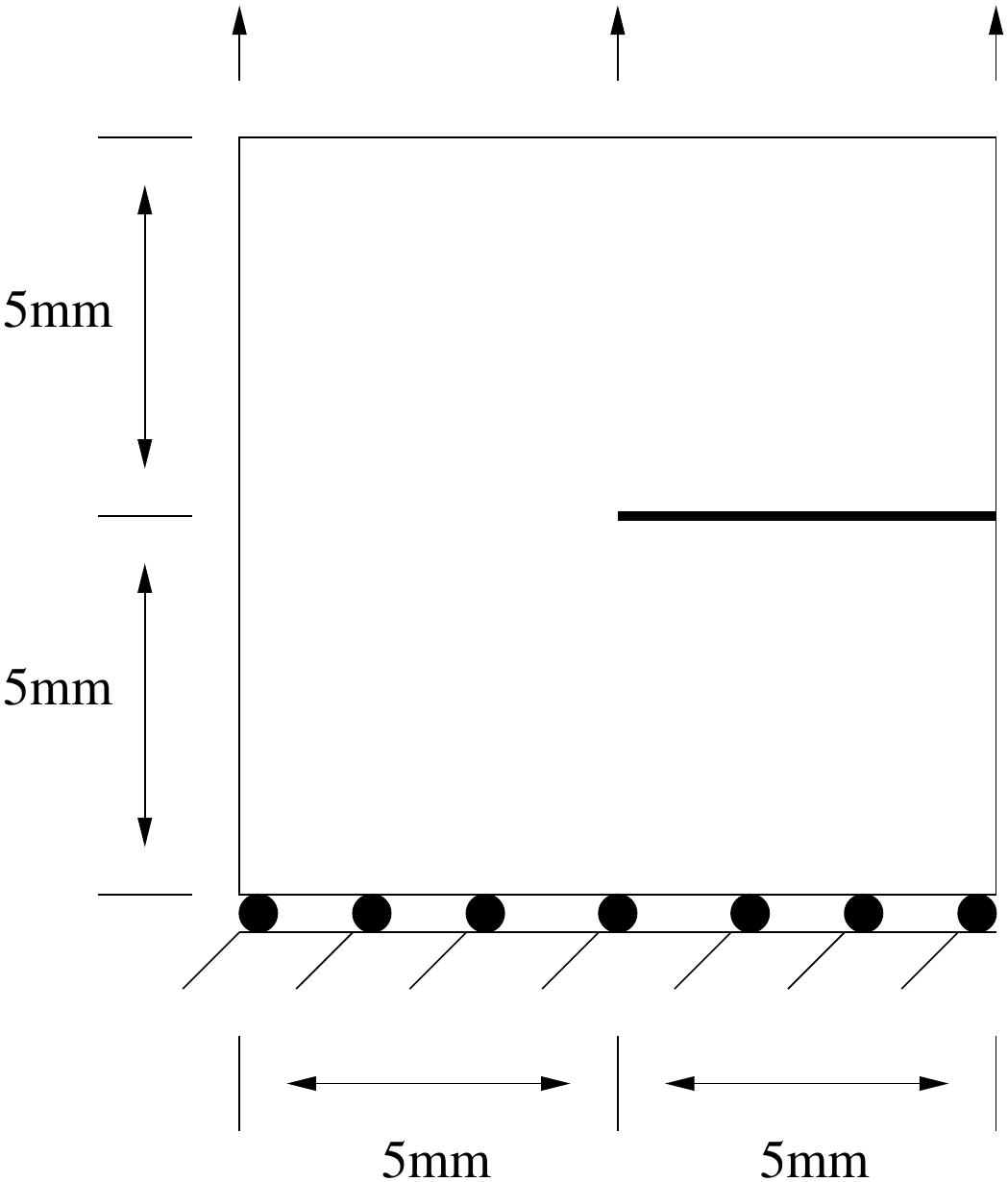}}\hspace*{1cm}
{\includegraphics[width=5cm]{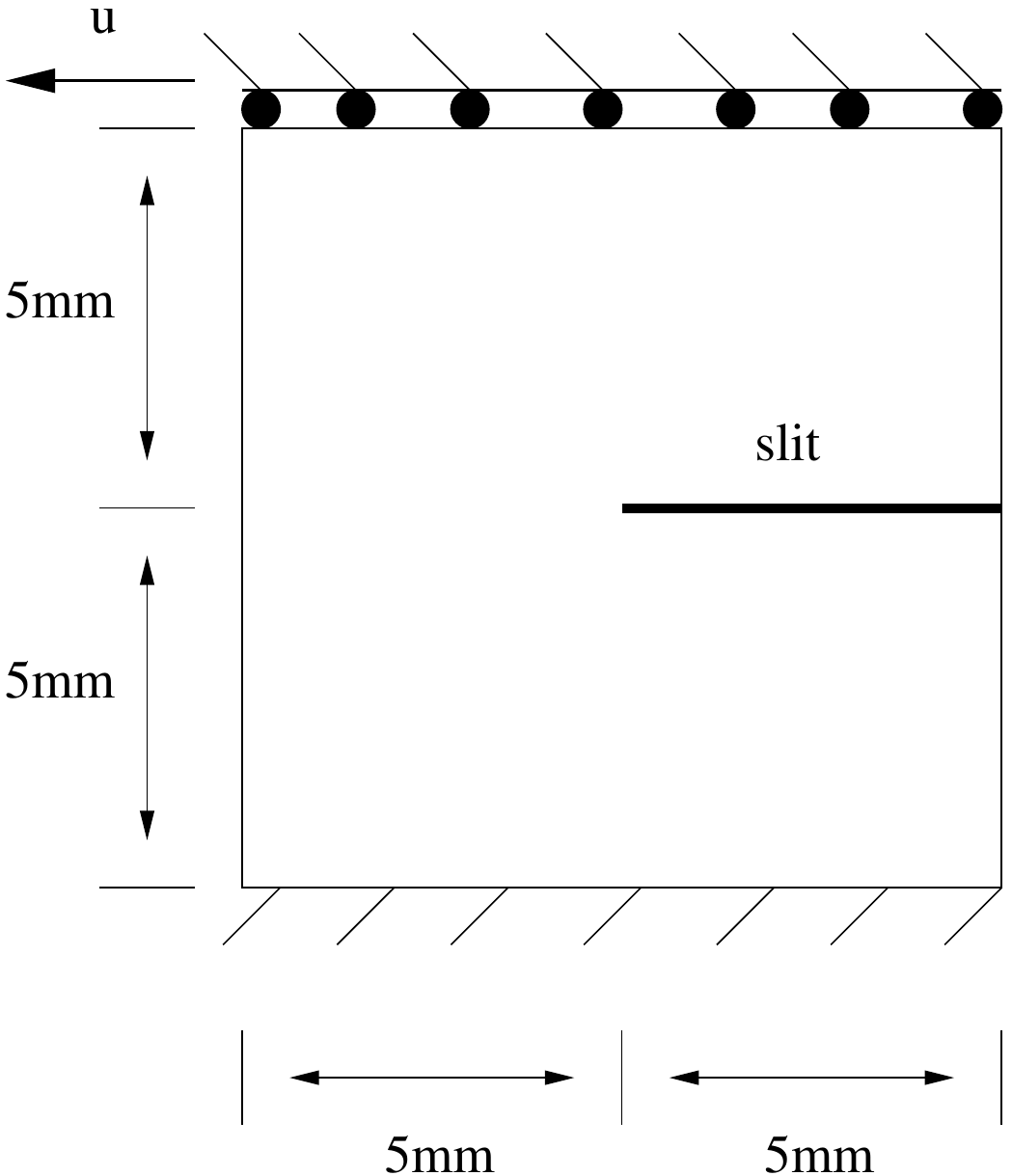}}\hspace*{1cm}
{\includegraphics[width=6cm]{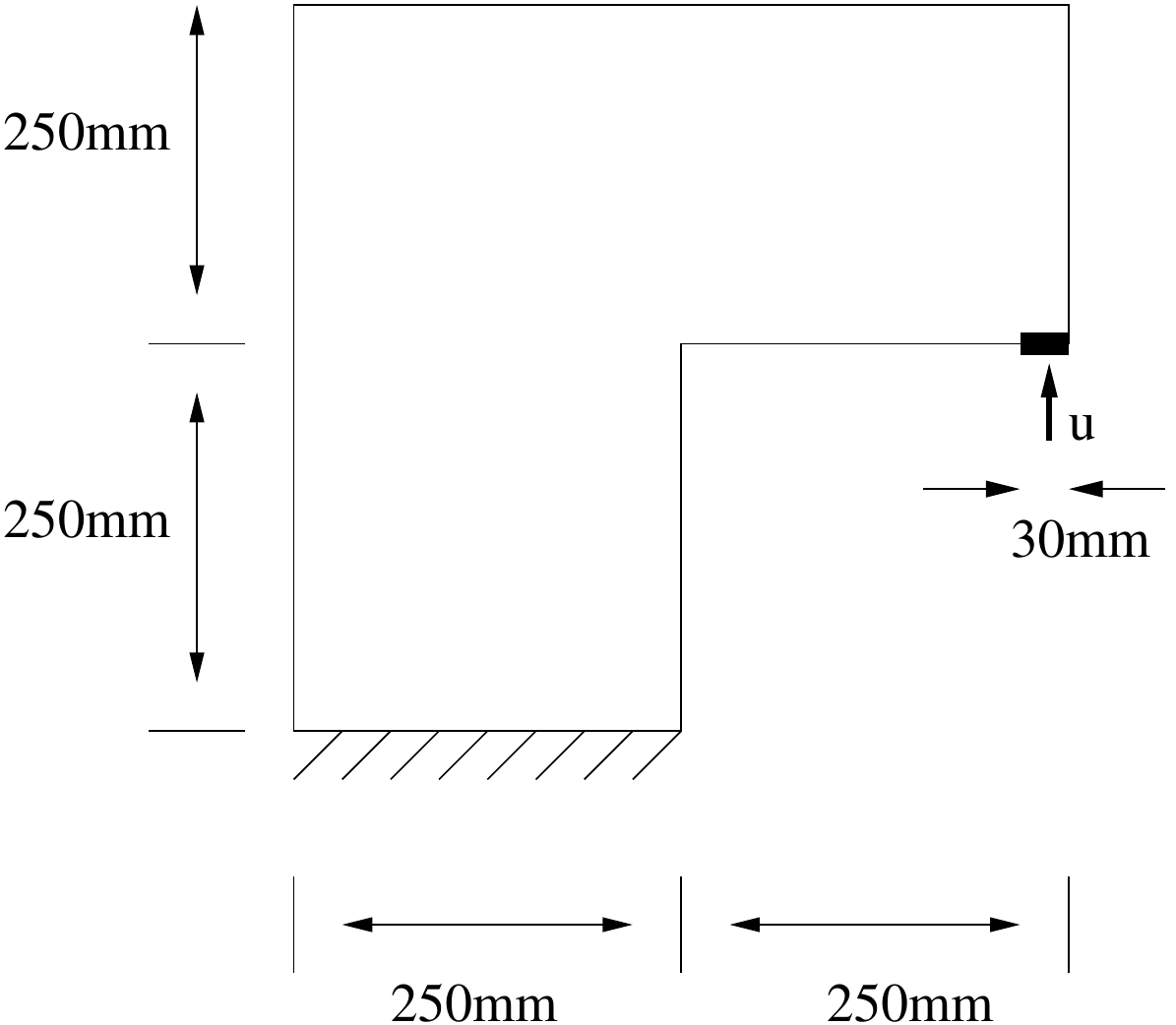}}
\caption{Examples 1,2,3: Configurations.
Left: single edge notched tension test. 
In detail, the boundary conditions are: $u_y$ = \SI{0}{mm} (homogeneous
Dirichlet)
and traction free (homogeneous Neumann conditions) 
in $x$-direction on the bottom. On the top boundary $\Gamma_\textnormal{top}$,
we prescribe $u_x$ = \SI{0}{mm} and $u_y$ as provided in \eqref{diri_ex_2}. All other 
boundaries including the slit are traction free (homogeneous Neumann
conditions). Single edge notched shear test (middle) and L-shaped panel test (right).
We prescribe the following conditions: On the left and right
boundaries, $u_y$ = \SI{0}{mm} and traction-free in $x$-direction. On the bottom 
part, we use $u_x = u_y$ = \SI{0}{mm} and on $\Gamma_\textnormal{top}$, we prescribe $u_y$ = \SI{0}{mm} 
and $u_x$ as stated in \eqref{diri_ex_2}. Finally, the lower part of the 
slit is fixed in $y$-direction, i.e., $u_y$ = \SI{0}{mm}. 
For the L-shaped panel test (at right), 
the lower left boundary is fixed: $u_x = u_y =$ \SI{0}{mm}.
A displacement condition for $u_y$ is prescribed by 
\eqref{uy_test_2} in the right corner
on a section $\Gamma_u$ that has \SI{30}{mm} length.
}
\label{ex_1_config_and_mesh}
\end{figure}

Specifically, we use
$\mu_s$ = \SI{80.77}{kN/mm^2}, $\lambda_s$ = \SI{121.15}{kN/mm^2}, and
$G_c$ = \SI{2.7}{N/mm}.
The crack growth is driven  by a non-homogeneous Dirichlet
condition for the displacement field on $\Gamma_\textnormal{top}$, the top boundary of $B$ .
We increase the displacement on $\Gamma_\textnormal{top}$ over time, namely we apply
non-homogeneous Dirichlet conditions:
\begin{align}
u_y &= t \bar{u}, \quad \bar{u} = \SI{1}{mm/s}, \label{diri_ex_1}
\end{align}
where $t$ denotes the current loading time. Furthermore, we set $\kappa = 10^{-10}$ [mm] and $\eps = 2h$ [mm]. We evaluate the surface load vector on the $\Gamma_\textnormal{top}$ as
\begin{equation}
\label{eq_Fx_Fy}
\tau = (F_x,F_y) := \int_{\Gamma_\textnormal{top}} \bsig(u)\nu\, \textnormal{d}s,
\end{equation}
with normal vector $\nu$,
and we are particularly interested in $F_y$ for Example 1 
and $F_x$ for Example 2 (Section \ref{sec_ex_2}). Graphical solutions are displayed in the Figures \ref{miehe_tension_a}
and \ref{miehe_tension_b} showing the phase-field variable and the
discontinuous displacement field. 
Our findings of using different stabilization parameters $L$ are compared  
in the Figures \ref{miehe_tension_c}, \ref{miehe_tension_c1}, \ref{miehe_tension_d},
\ref{miehe_tension_e}, and \ref{miehe_tension_f}. 
Different mesh refinement studies are shown 
in the Figures \ref{miehe_tension_e} and \ref{miehe_tension_f}. Here, the
number of staggered iterations does not increase with finer mesh levels, which
shows the robustness of our proposed methodology.
\begin{figure}[H]
\centering
{\includegraphics[width=8cm]{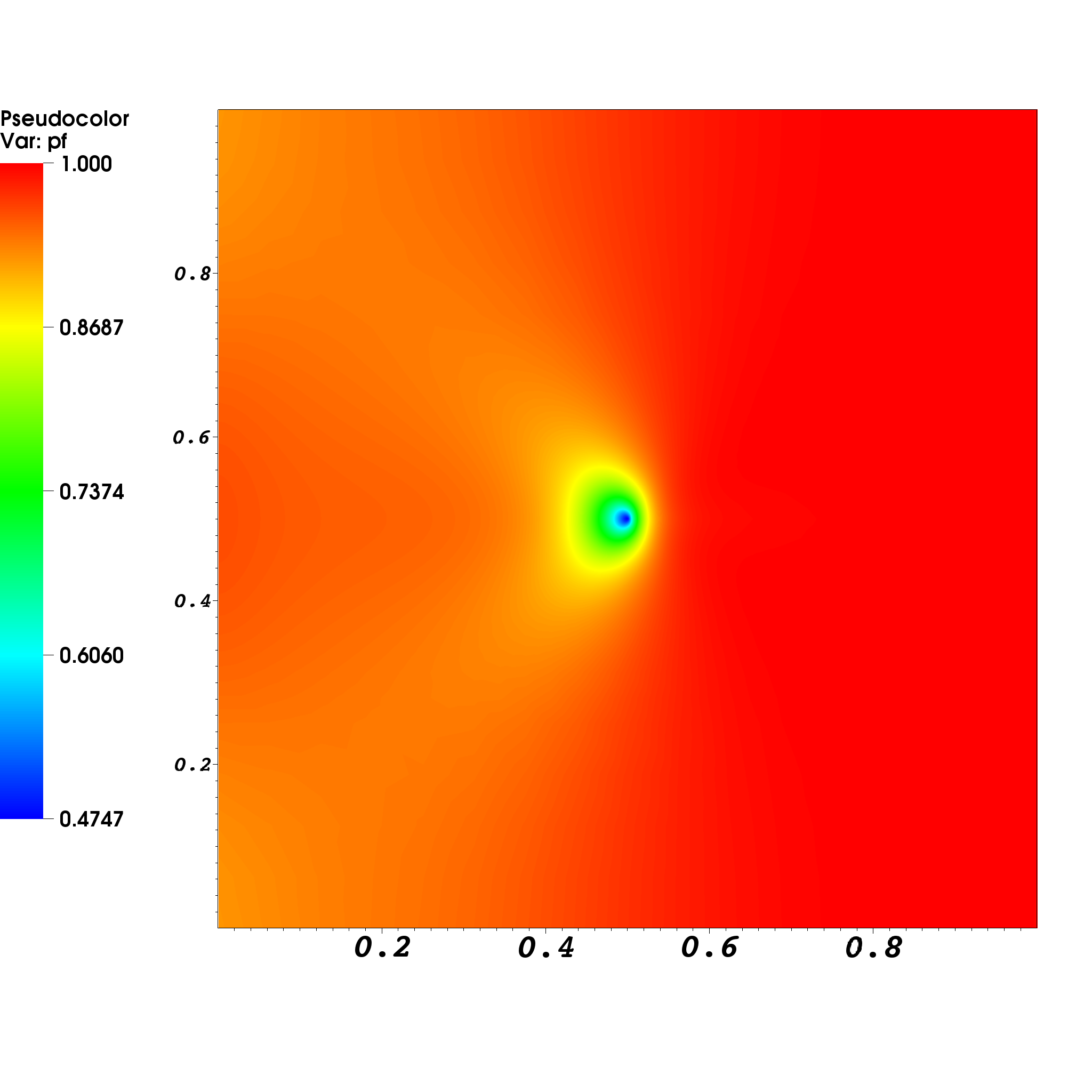}} 
{\includegraphics[width=8cm]{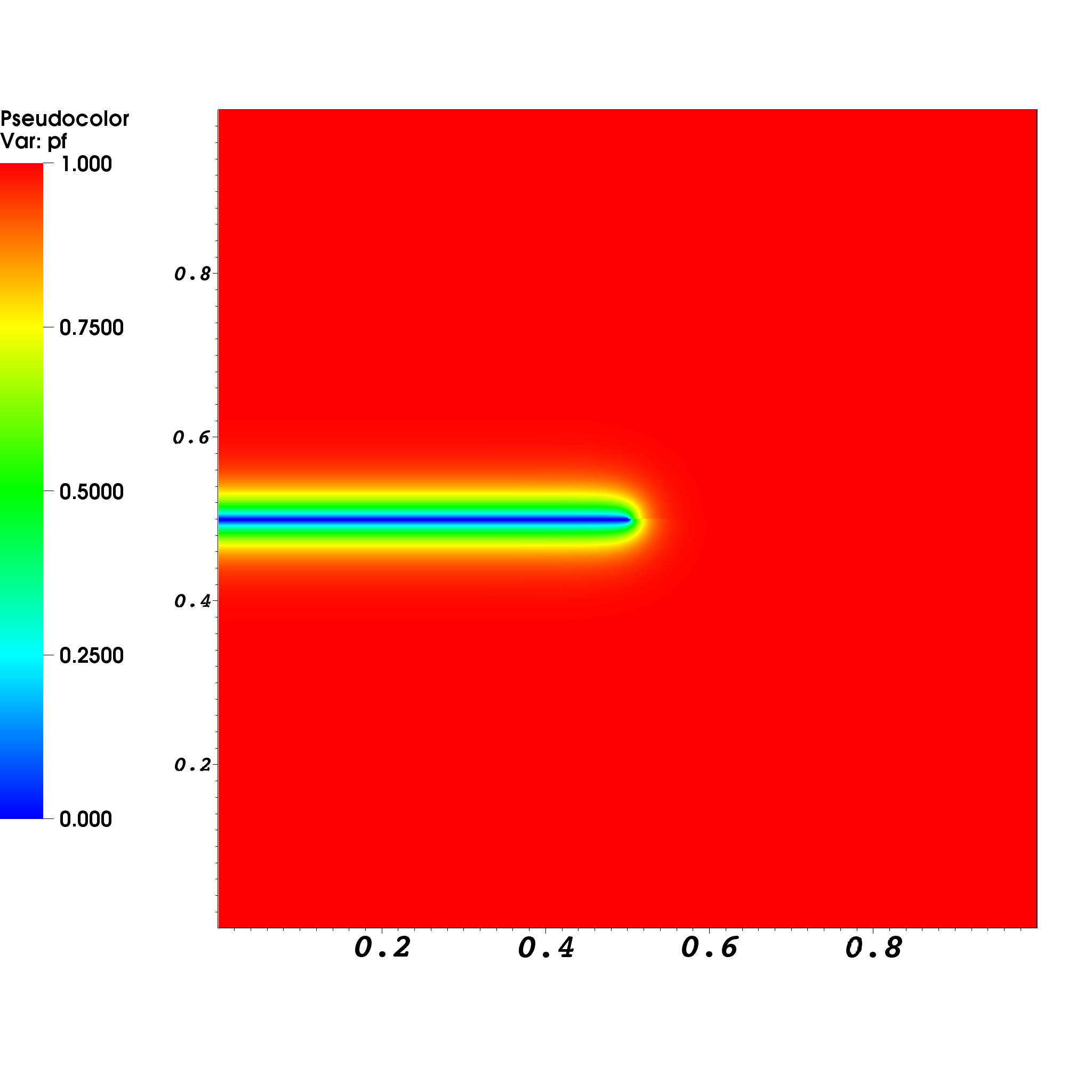}} 
\caption{Example 1: Single edge notched tension test: crack path at loading
  step $59$ (left) and $60$ (right). We see brutal crack growth in which 
the domain is cracked within one loading step.}
\label{miehe_tension_a}
\end{figure}
\begin{figure}[H]
\centering
{\includegraphics[width=7cm]{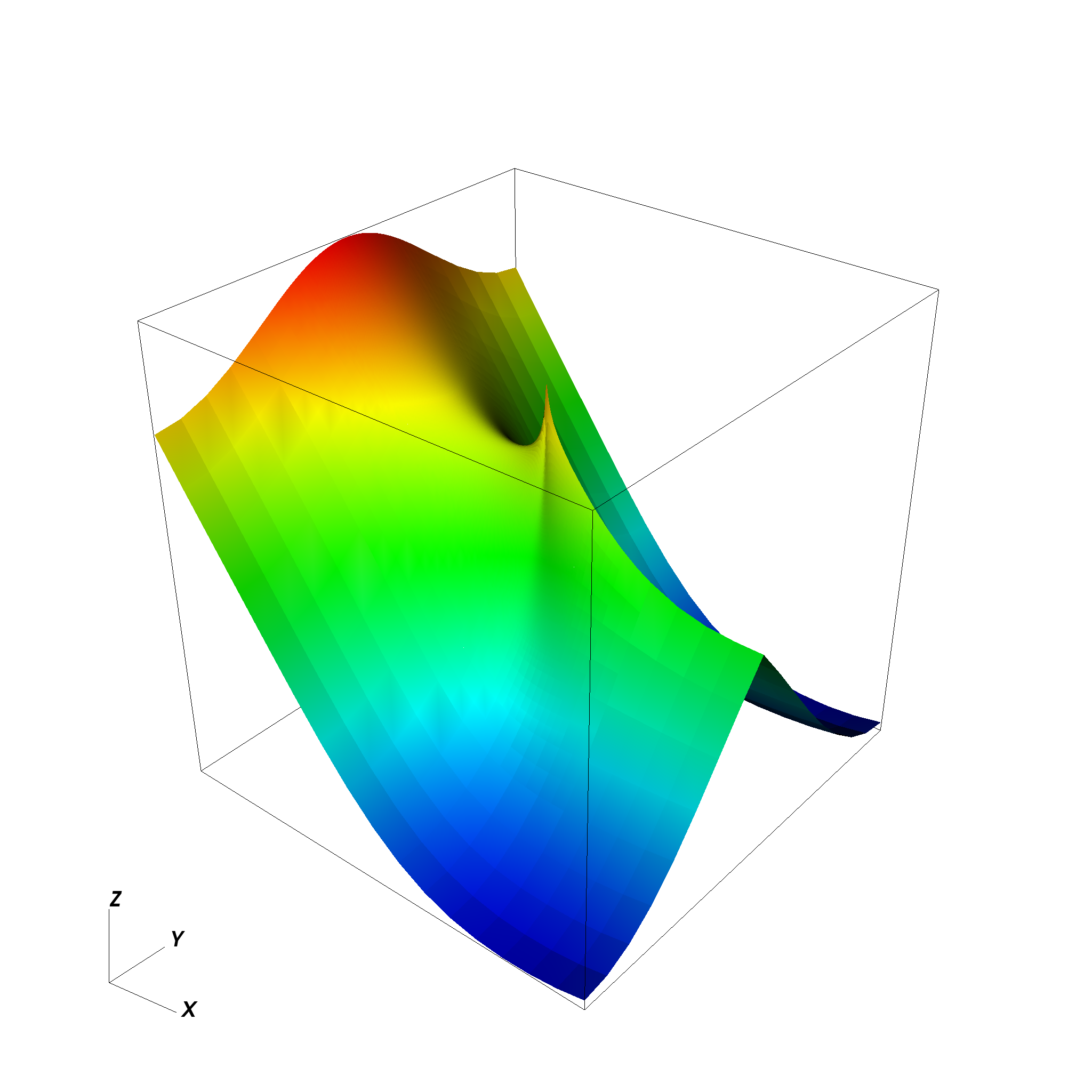}} 
{\includegraphics[width=7cm]{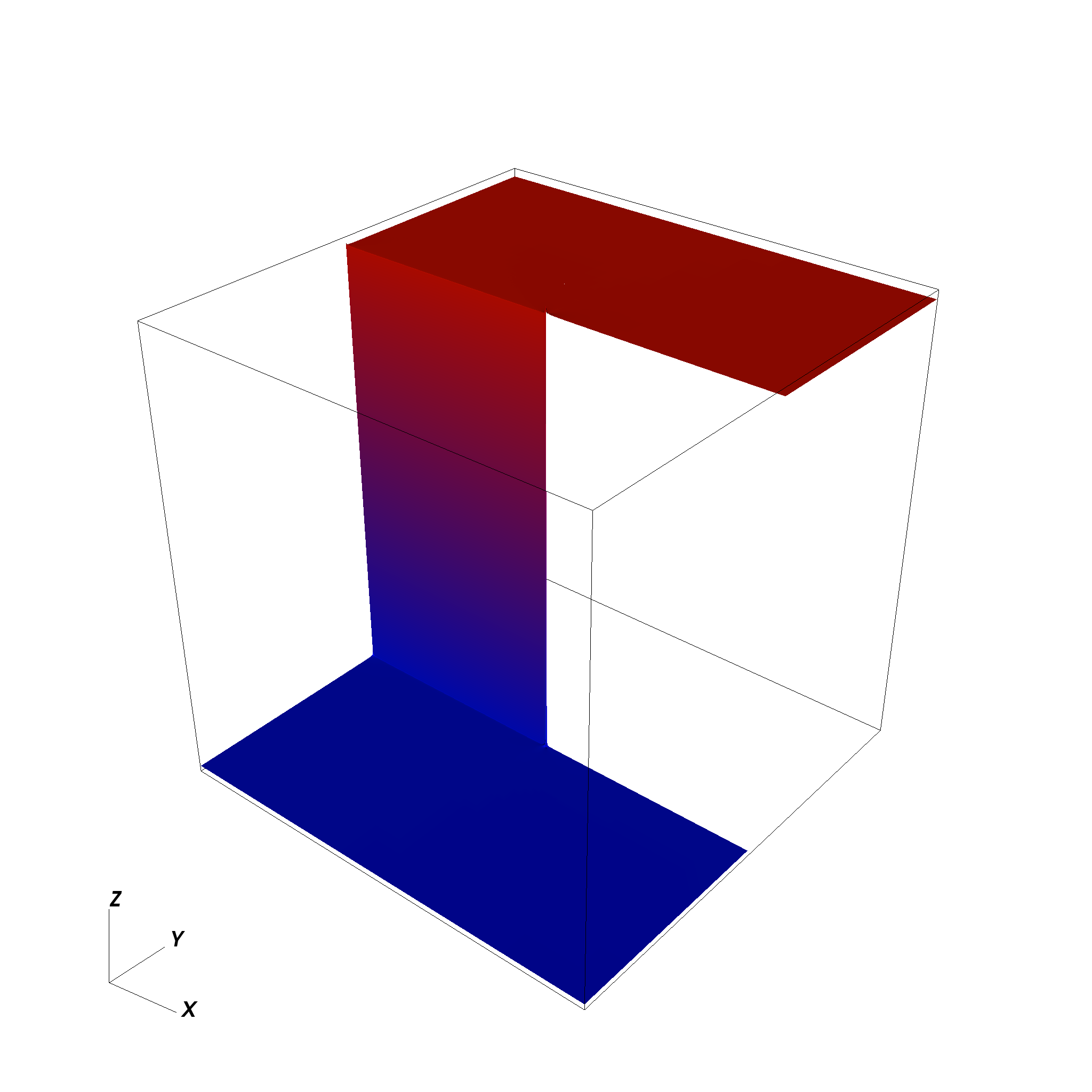}} 
\caption{Example 1: Single edge notched tension test: 3D plot of the
  displacement variable $u_x$ at the loading steps $59$ and $60$. At right, 
  the domain is totally fractured. In particular, we see the initial 
  crack build in the geometry in the right part where the domain has a true
  discontinuity.
  In the left part, the domain is cracked using the phase-field
  variable. Here, the displacement variable is still continuous since we 
  are using $C^0$ finite elements for the spatial discretization.}
\label{miehe_tension_b}
\end{figure}
\begin{figure}[H]
\centering
{\includegraphics[width=8cm]{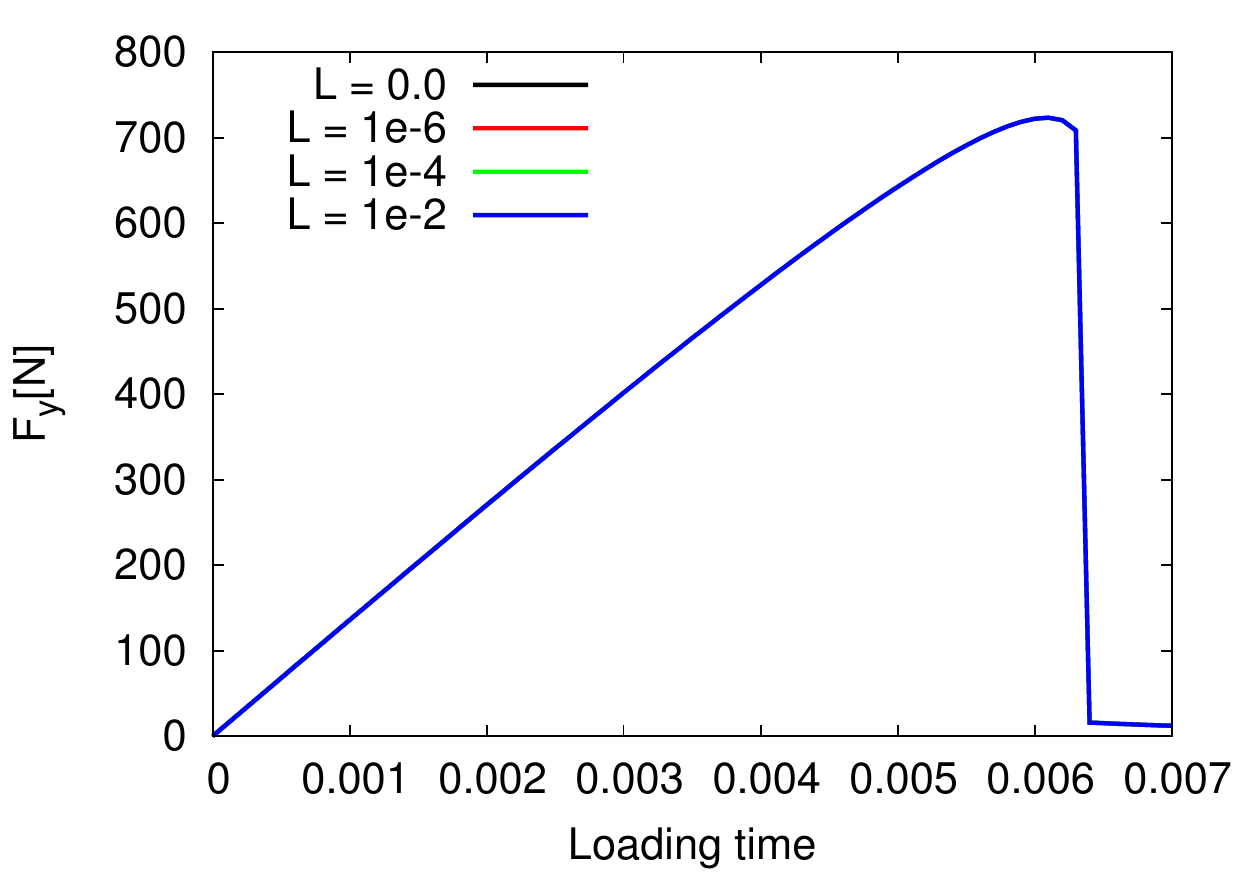}} 
{\includegraphics[width=8cm]{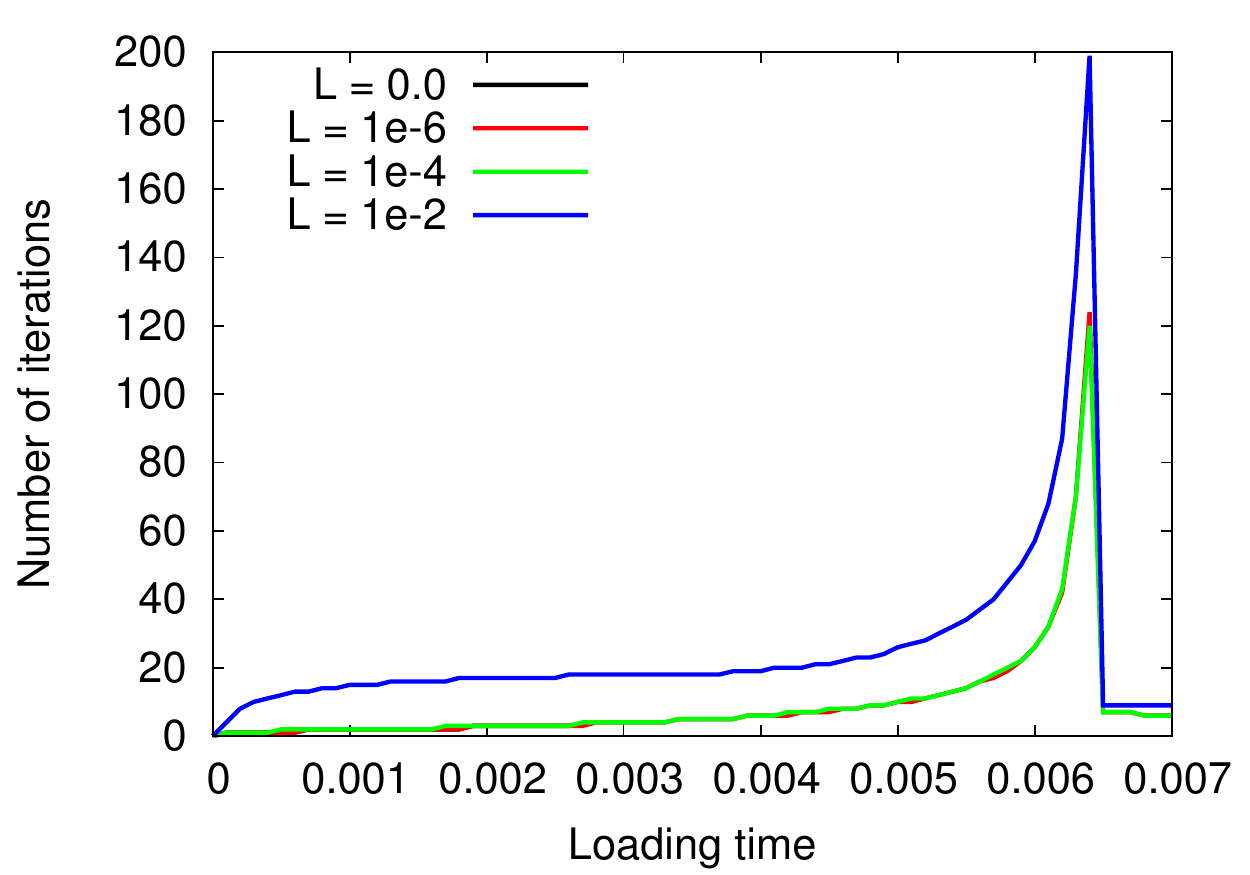}} 
\caption{Example 1: Comparison of different $L$. At left, the stresses are
  shown. At right, the number of staggered iterations is displayed.}
\label{miehe_tension_c}
\end{figure}
\begin{figure}[H]
\centering
{\includegraphics[width=8cm]{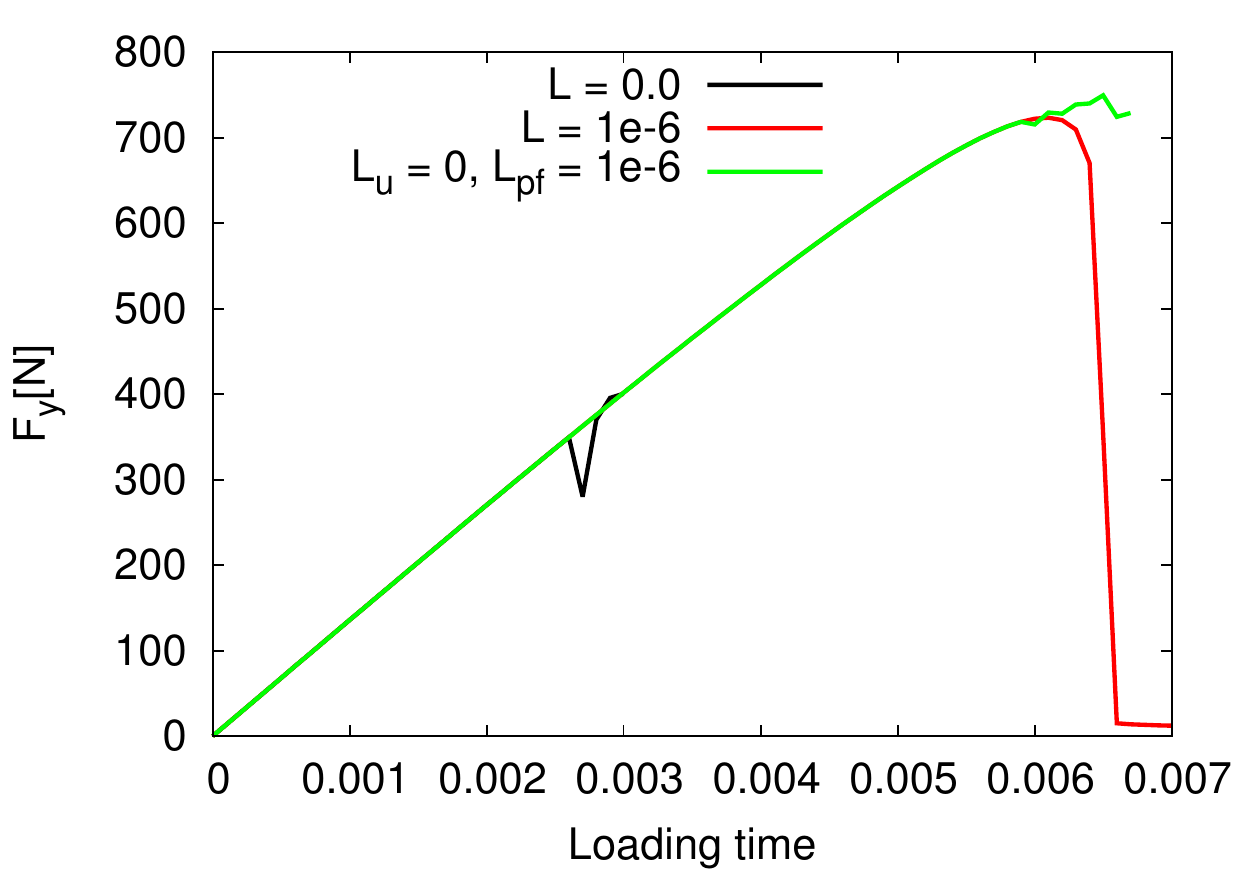}} 
{\includegraphics[width=8cm]{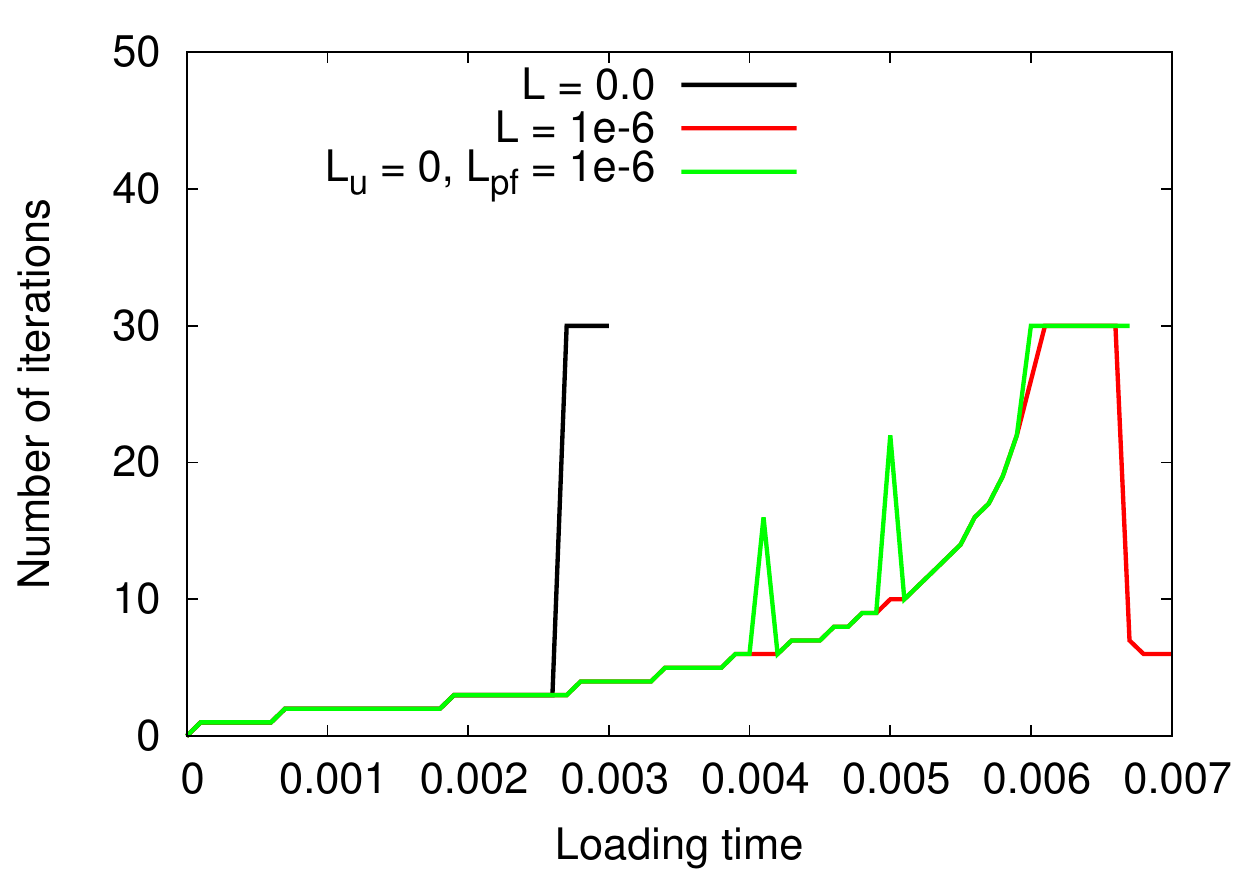}} 
\caption{Example 1: Comparison of different $L$. In this example, possibly due to
brutal crack growth, stabilizing only phase field subproblem does not work.
}
\label{miehe_tension_c1}
\end{figure}
\begin{figure}[H]
\centering
{\includegraphics[width=8cm]{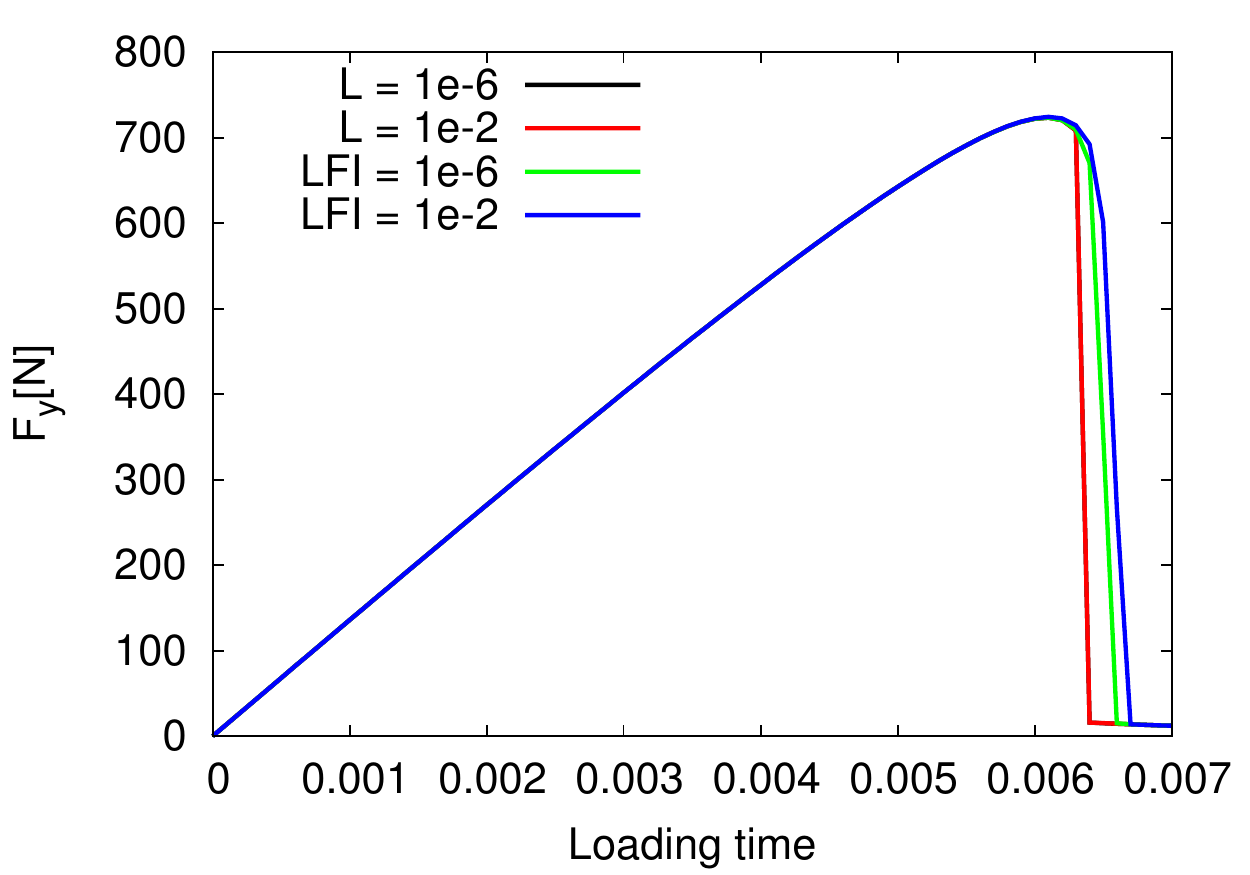}} 
{\includegraphics[width=8cm]{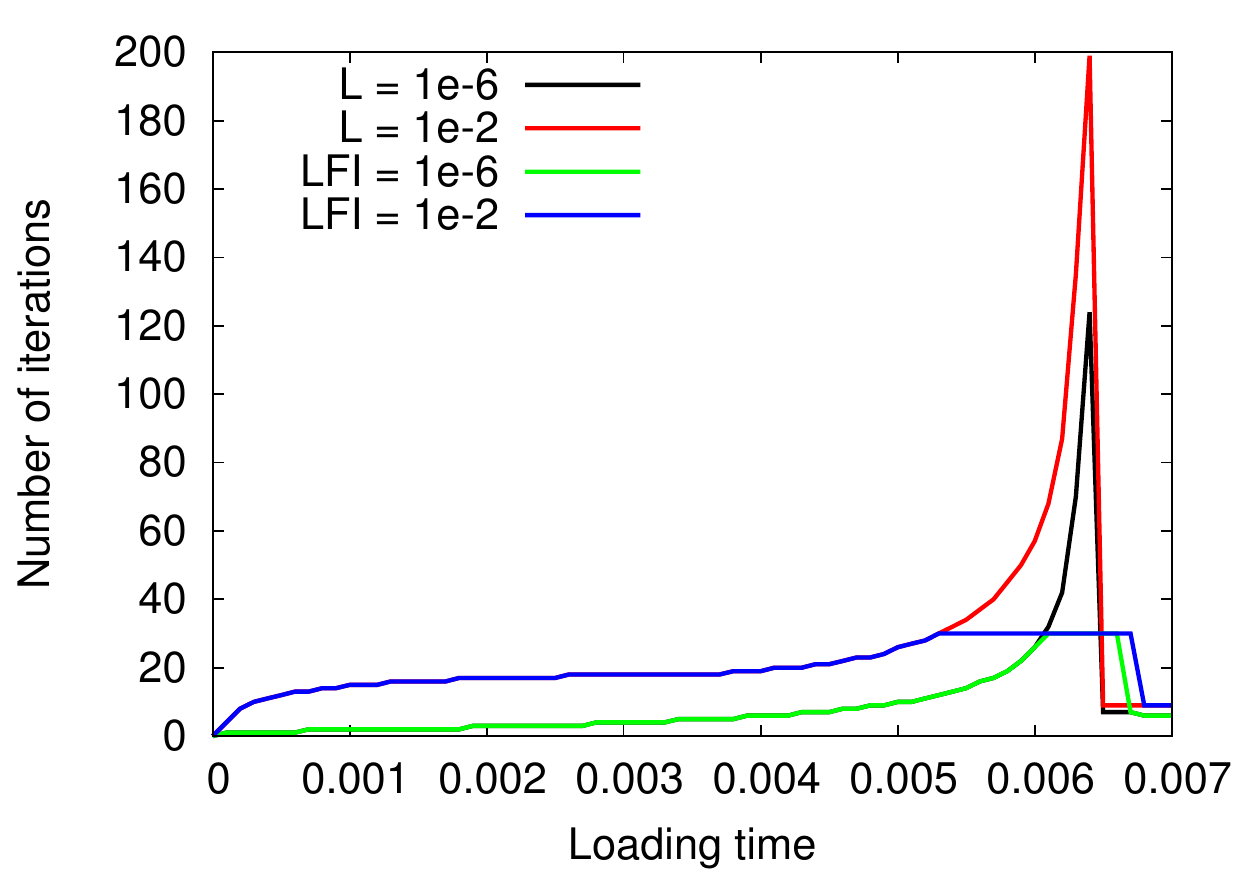}} 
\caption{Example 1: Comparison of different $L$ for an open number of
  iterations and a fixed number of iterations (LFI) with a maximum of $30$ iterations. At left, the stresses are
  shown. At right, the number of staggered iterations is displayed.}
\label{miehe_tension_d}
\end{figure}
\begin{figure}[H]
\centering
{\includegraphics[width=8cm]{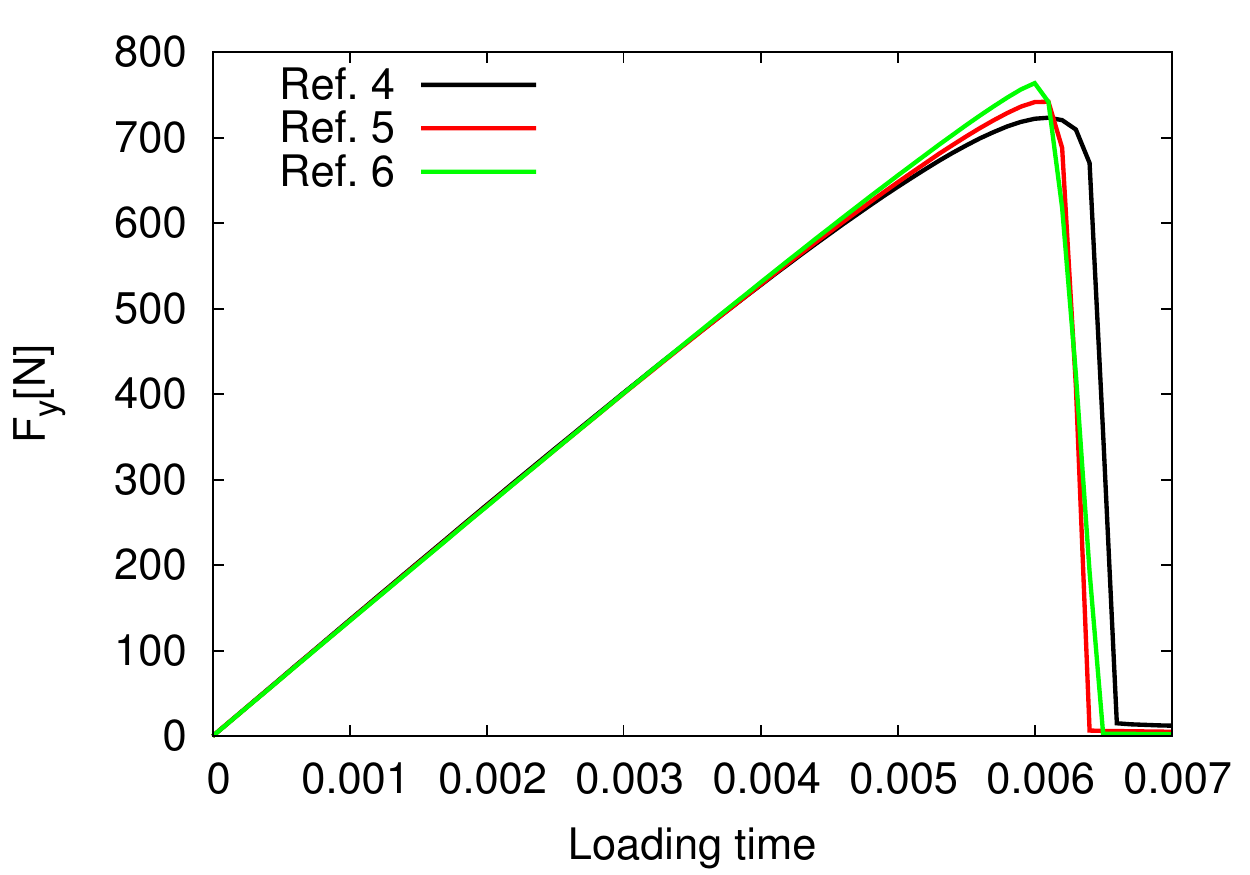}} 
{\includegraphics[width=8cm]{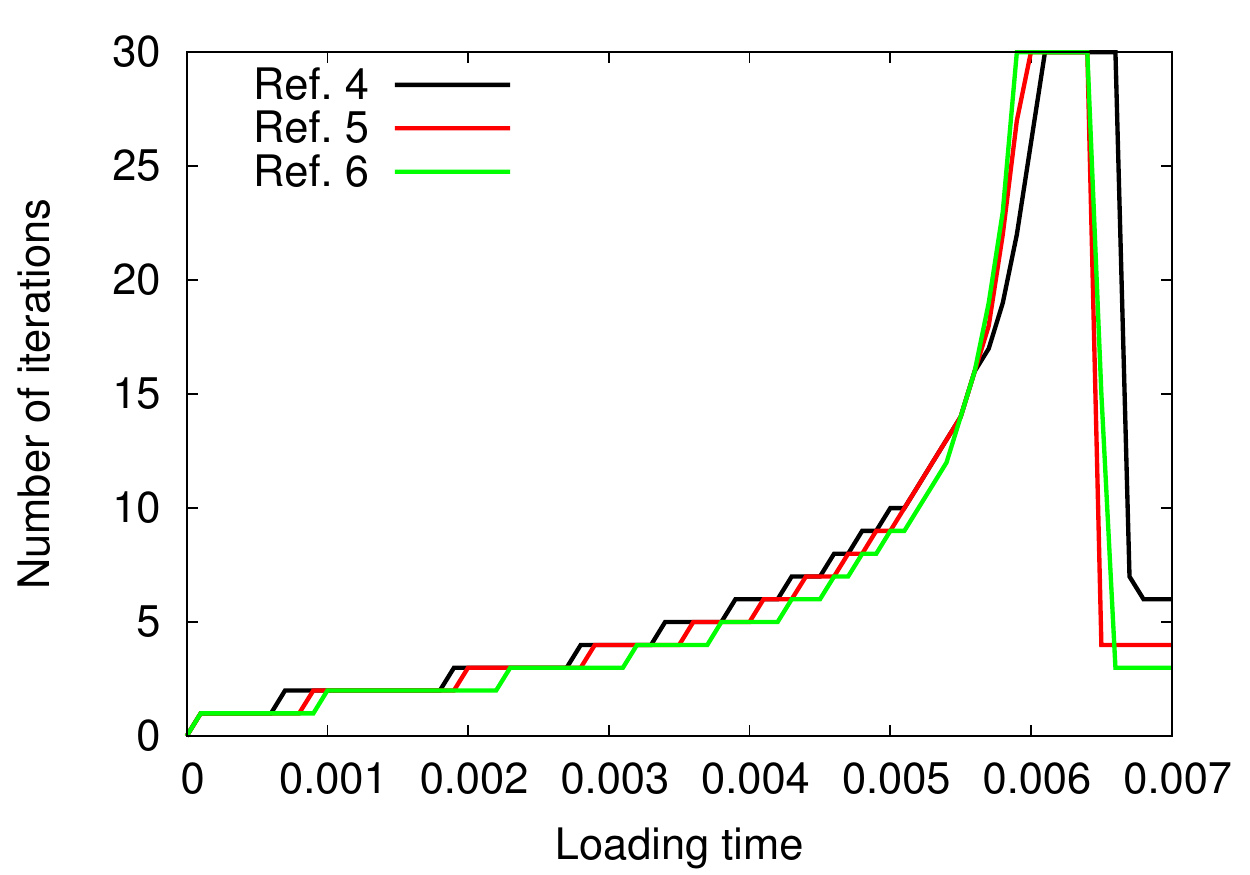}} 
\caption{Example 1: Using $L=1e-6$ comparing different mesh refinement levels $4,5,6$.
At left, the stresses are
  shown. At right, the number of staggered iterations is displayed.}
\label{miehe_tension_e}
\end{figure}
\begin{figure}[H]
\centering
{\includegraphics[width=8cm]{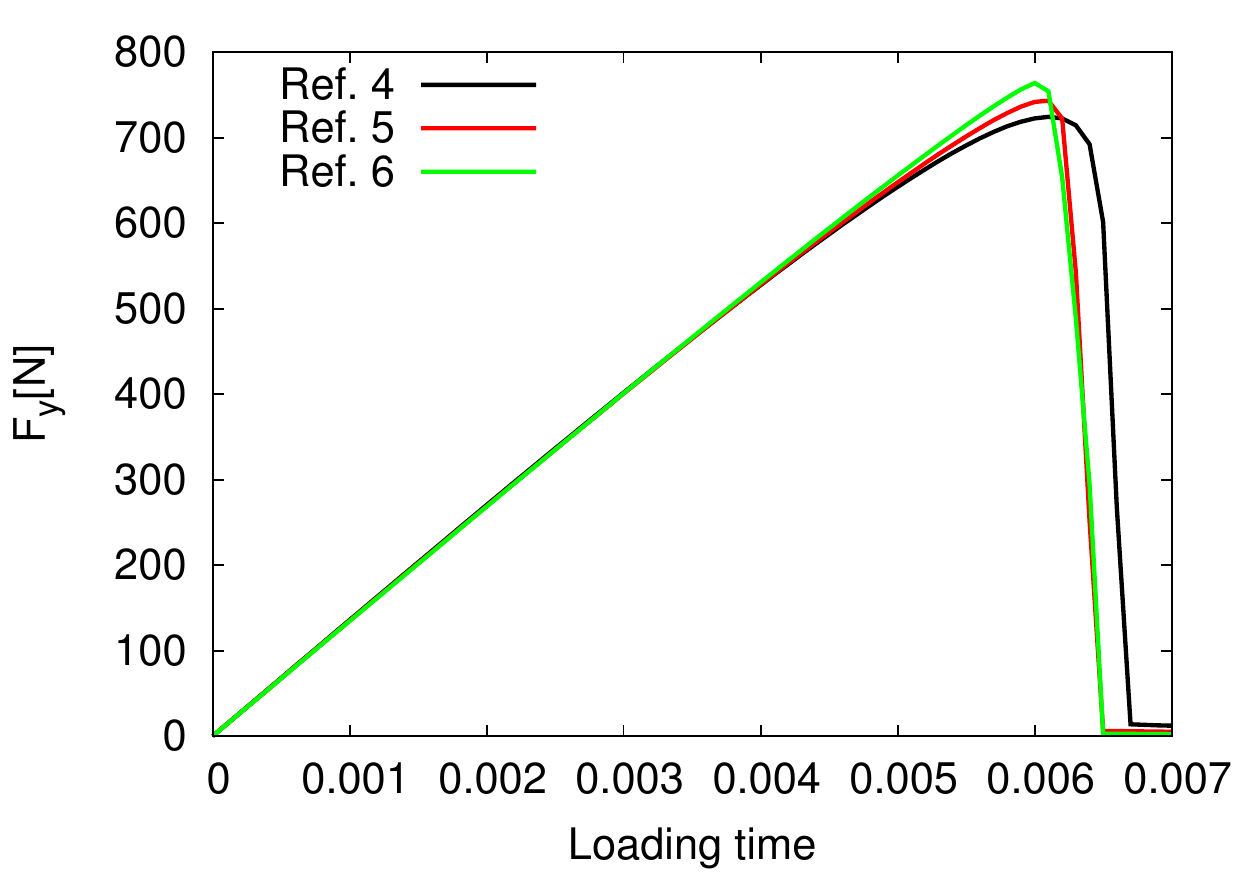}} 
{\includegraphics[width=8cm]{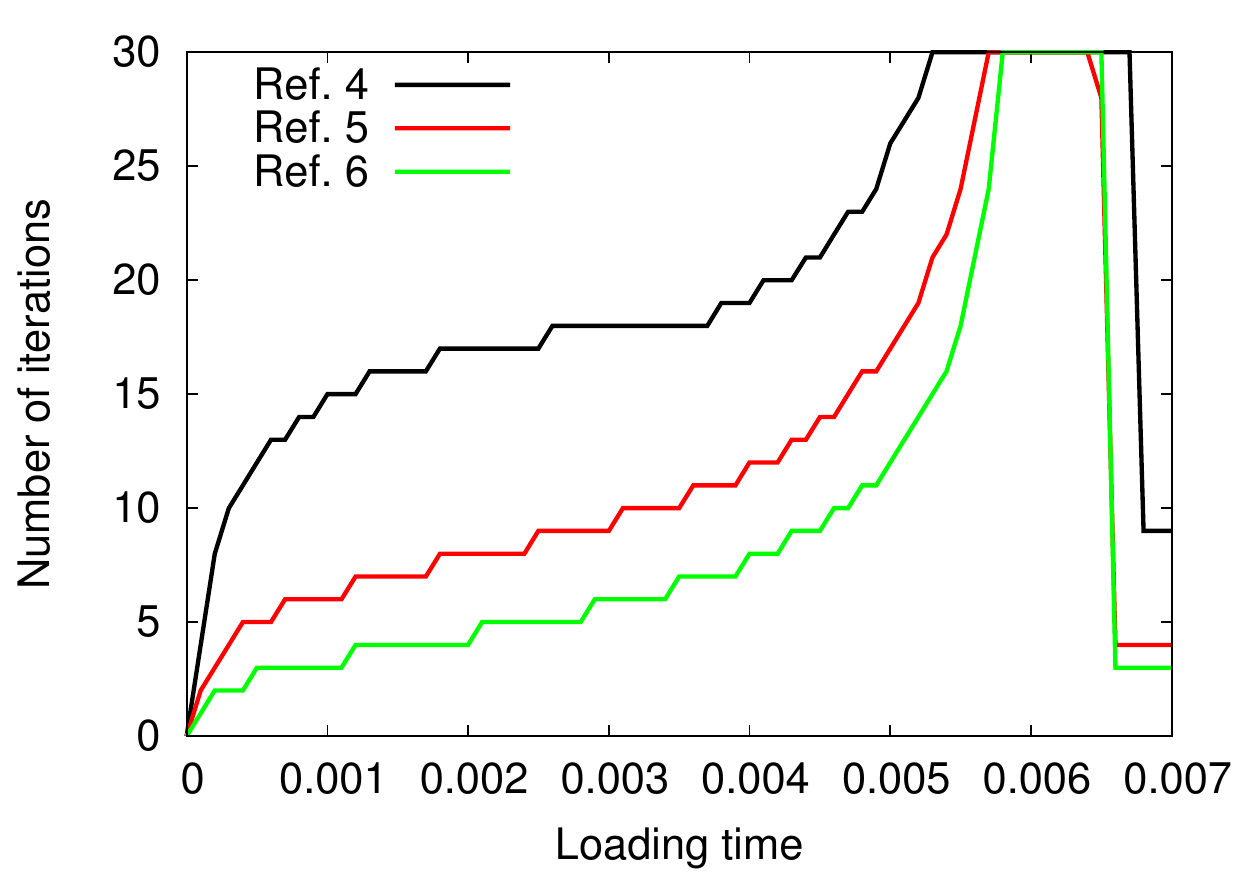}} 
\caption{Example 1: Using $L=1e-2$ comparing different mesh refinement levels $4,5,6$.
At left, the stresses are
  shown. At right, the number of staggered iterations is displayed.}
\label{miehe_tension_f}
\end{figure}

\newpage
\subsection{Single edge notched shear test}
\label{sec_ex_2}
The configuration of this second setting is 
very similar to Example 1 and was first proposed 
in a phase-field context in \cite{MieWelHof10a}.
We now use the model with strain-energy split 
\eqref{iter2_mod}--\eqref{iter1_mod}.
The parameters and the geometry (see Figure \ref{ex_1_config_and_mesh}) are the same as in the 
previous test case. The boundary condition is changed from tensile forces to 
a shear condition (see also again Figure \ref{ex_1_config_and_mesh}):
\begin{align}
u_x &= t \bar{u}, \quad \bar{u} = \SI{1}{mm/s}, \label{diri_ex_2}
\end{align}
As quanitity of interest we evaluate $F_x$ in \eqref{eq_Fx_Fy}. Our findings are shown in the Figures \ref{miehe_shear_a},
  \ref{miehe_shear_b},
\ref{miehe_shear_b1}, \ref{miehe_shear_c}, \ref{miehe_shear_d},
\ref{miehe_shear_e},
and \ref{miehe_shear_f}. The major difference to Example 1 is that the scheme is converging even with $L_u = 0$, as computationally justified in
Figure \ref{miehe_shear_b1}. As in Example 1, the load-displacement curves 
are close to the published literature and, again, the proposed $L$ scheme is
robust under mesh refinement (see Figures \ref{miehe_shear_c} - \ref{miehe_shear_f}).
\begin{figure}[H]
\centering
{\includegraphics[width=8cm]{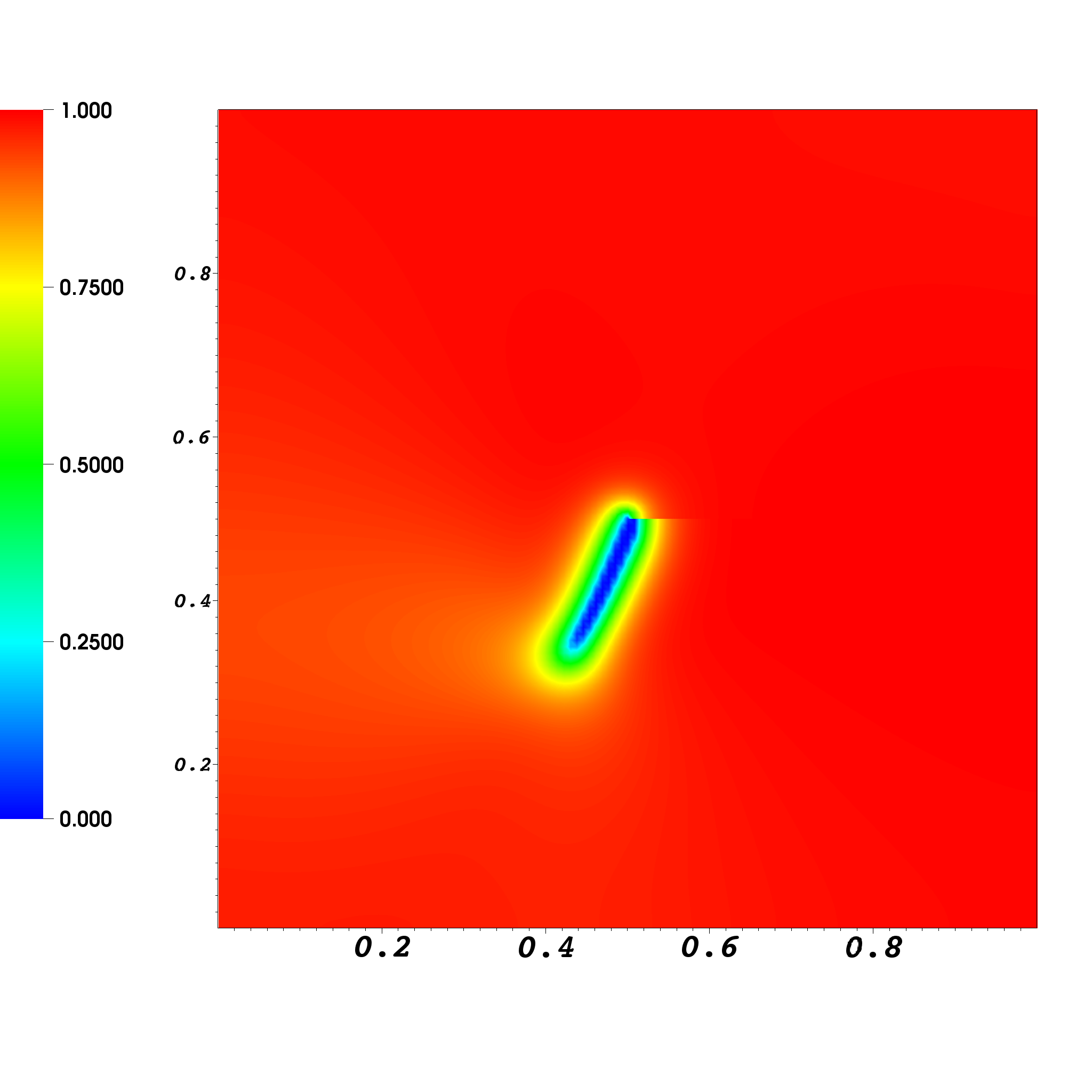}} 
{\includegraphics[width=8cm]{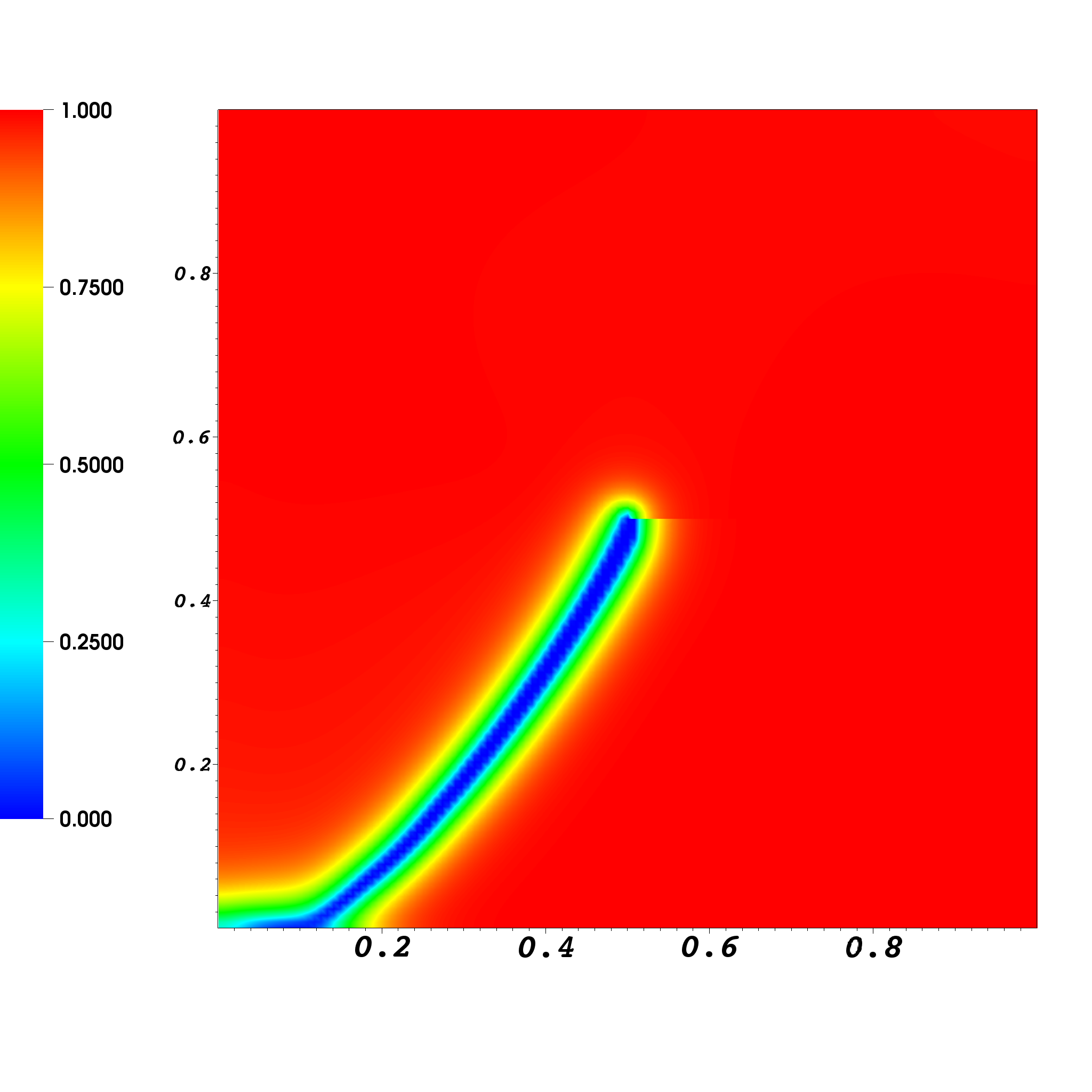}} 
\caption{Example 2: Single edge notched shear test: Crack path at loading
  step $110$ (left) and $135$ (right).}
\label{miehe_shear_a}
\end{figure}
\begin{figure}[H]
\centering
{\includegraphics[width=8cm]{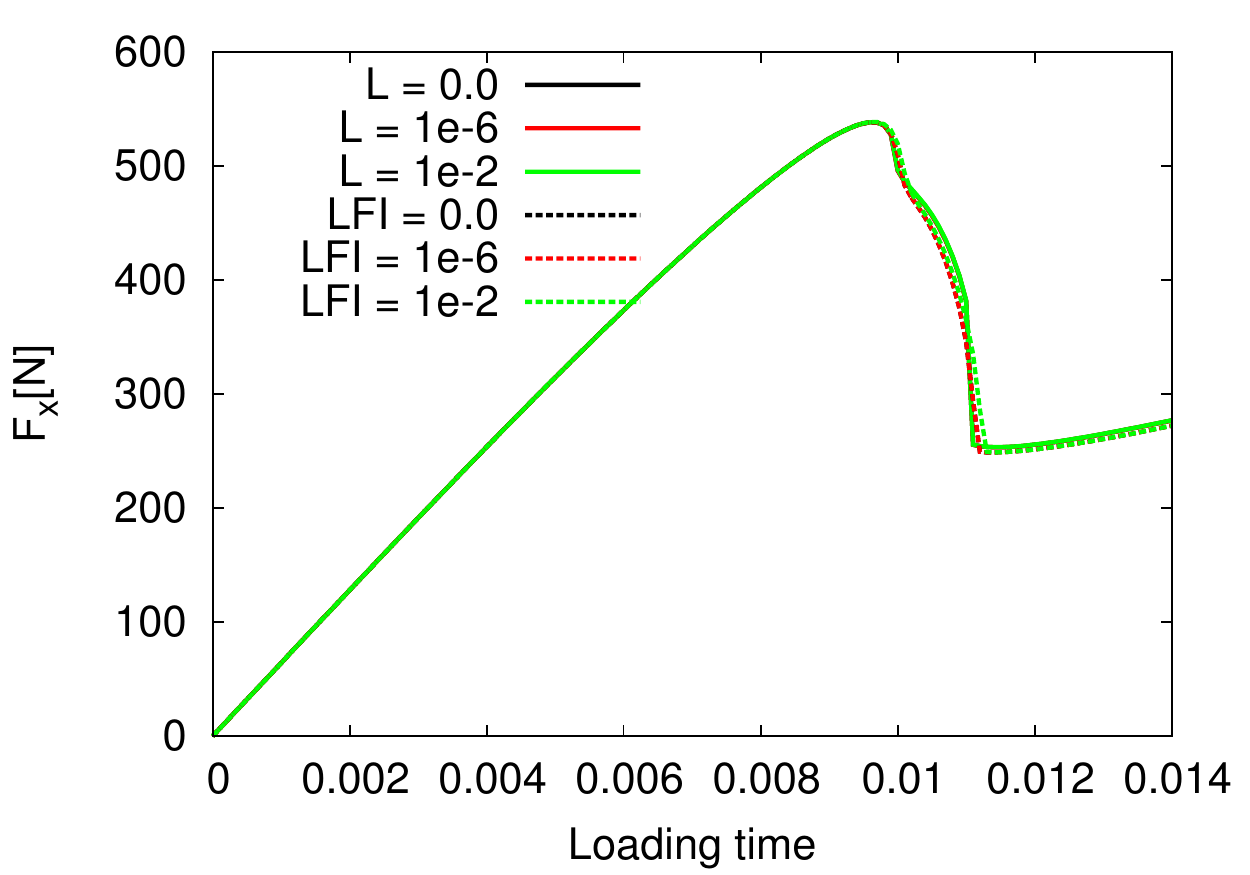}} 
{\includegraphics[width=8cm]{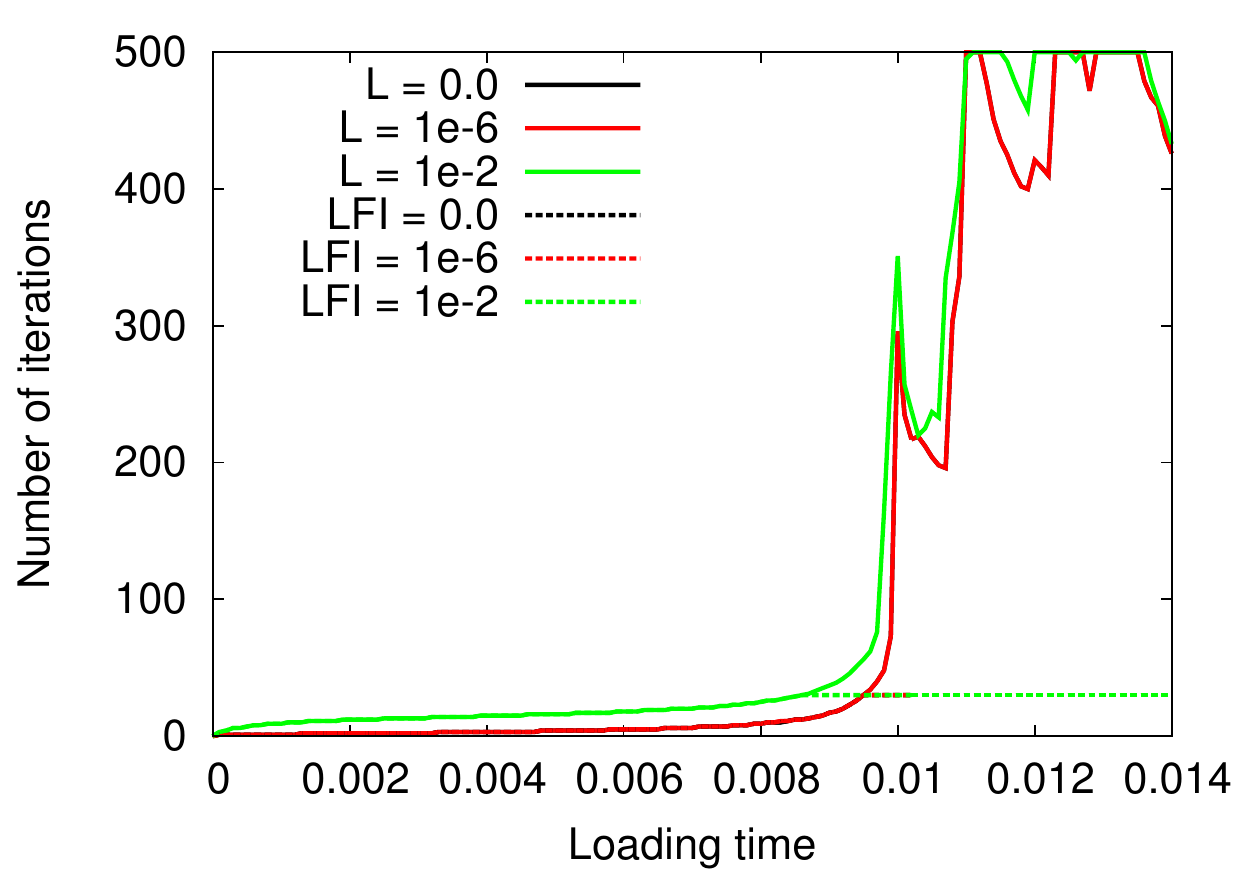}} 
\caption{Example 2: Comparison of different $L$ with an open number of
  staggered iterations (fixed by $500$) and a fixed number ($LFI$) with $30$
  iterations per loading step. At left, the
load-displacement curves displaying the evolution of $F_x$
versus the loading time. At right, the number of iterations is displayed.}
\label{miehe_shear_b}
\end{figure}
\begin{figure}[H]
\centering
{\includegraphics[width=8cm]{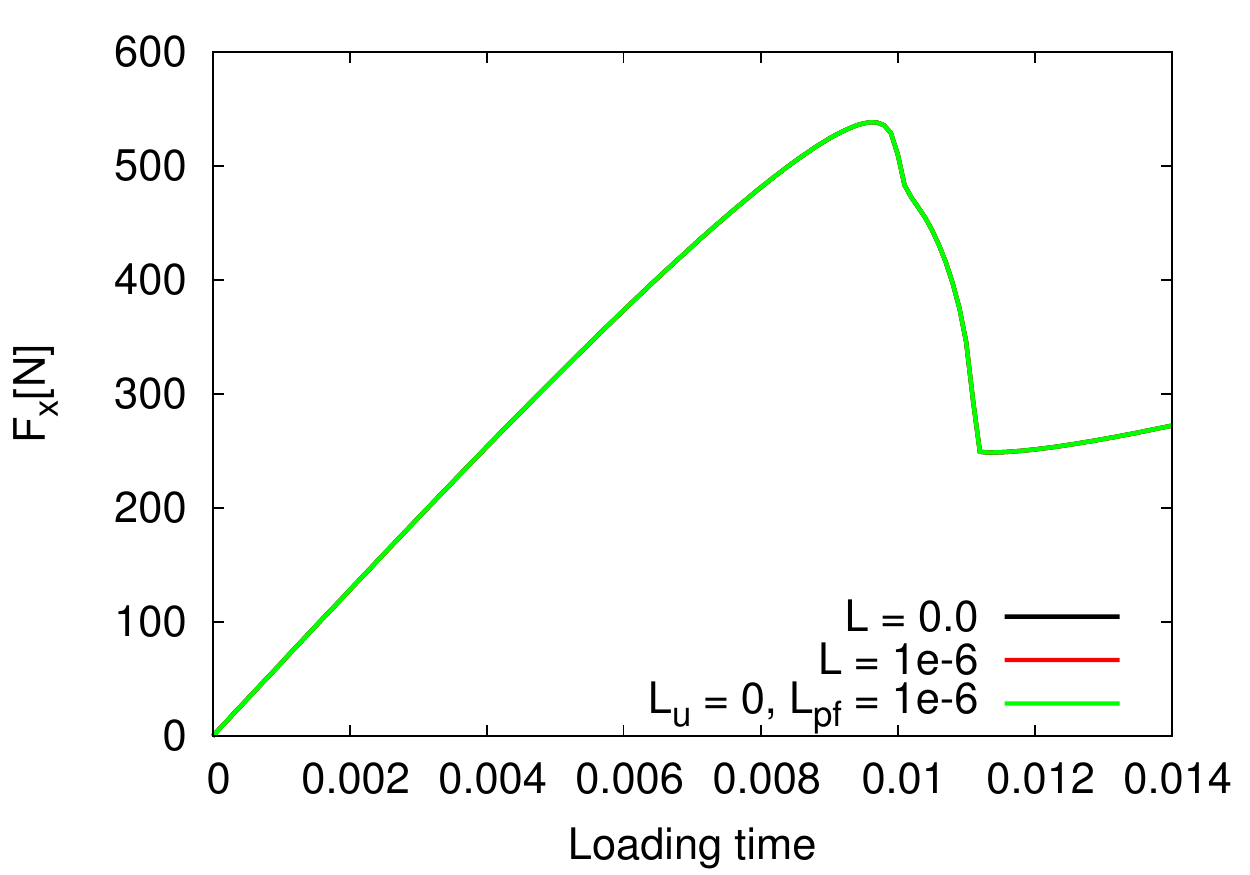}} 
{\includegraphics[width=8cm]{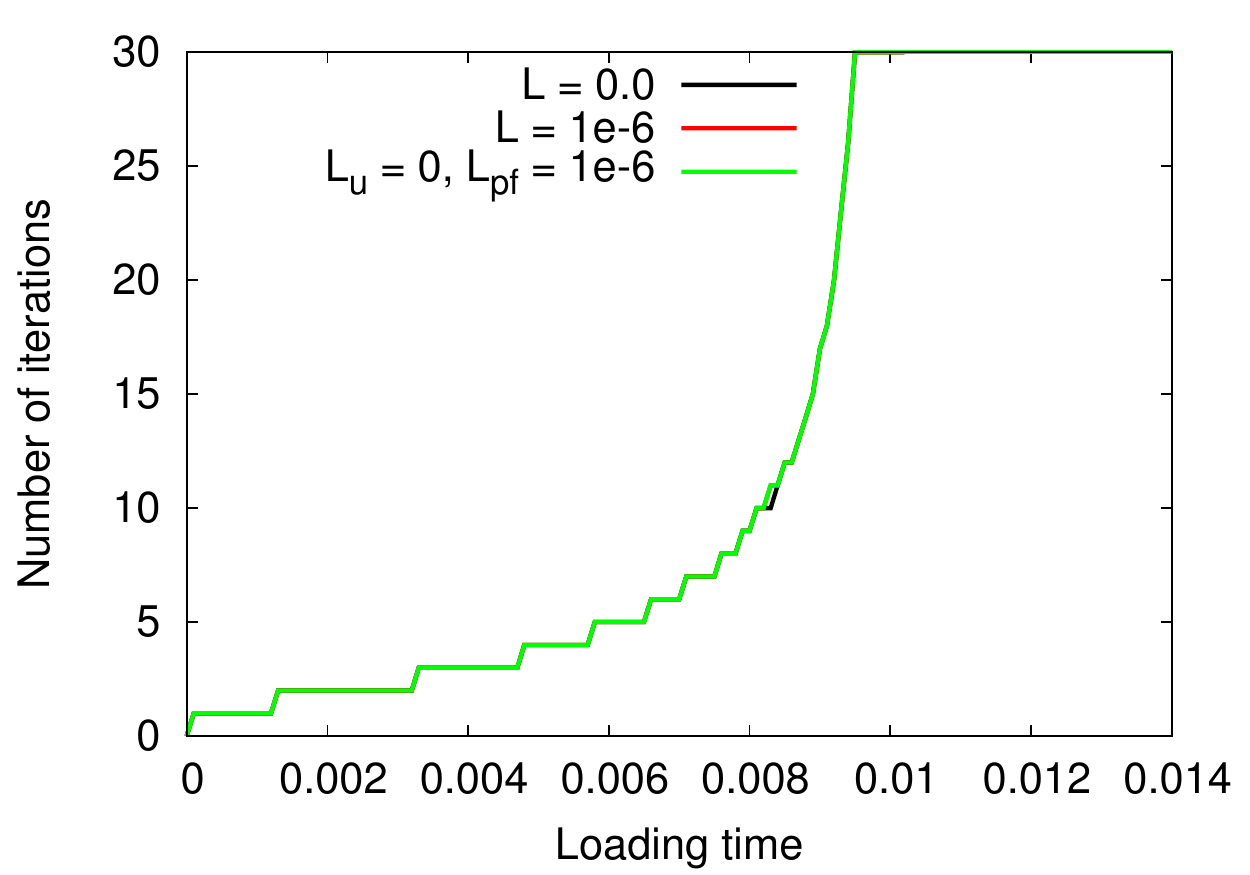}} 
\caption{Example 2: Comparison of different $L$. Observe that 
stabilizing the mechanics subproblem in this example has no or little effect. At left, the
load-displacement curves displaying the evolution of $F_x$
versus $u_y$ are shown. At right, the number of staggered iterations is displayed.}
\label{miehe_shear_b1}
\end{figure}
\begin{figure}[H]
\centering
{\includegraphics[width=8cm]{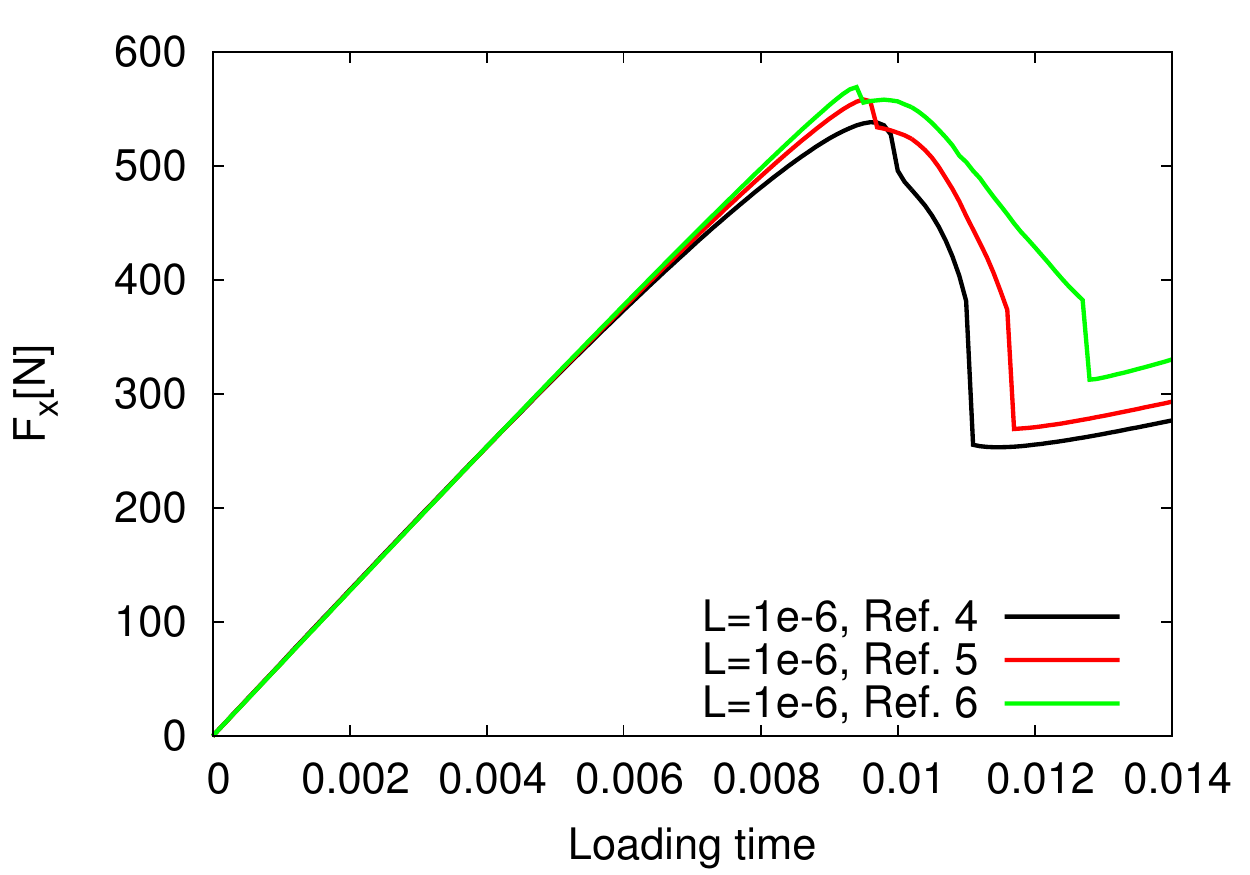}} 
{\includegraphics[width=8cm]{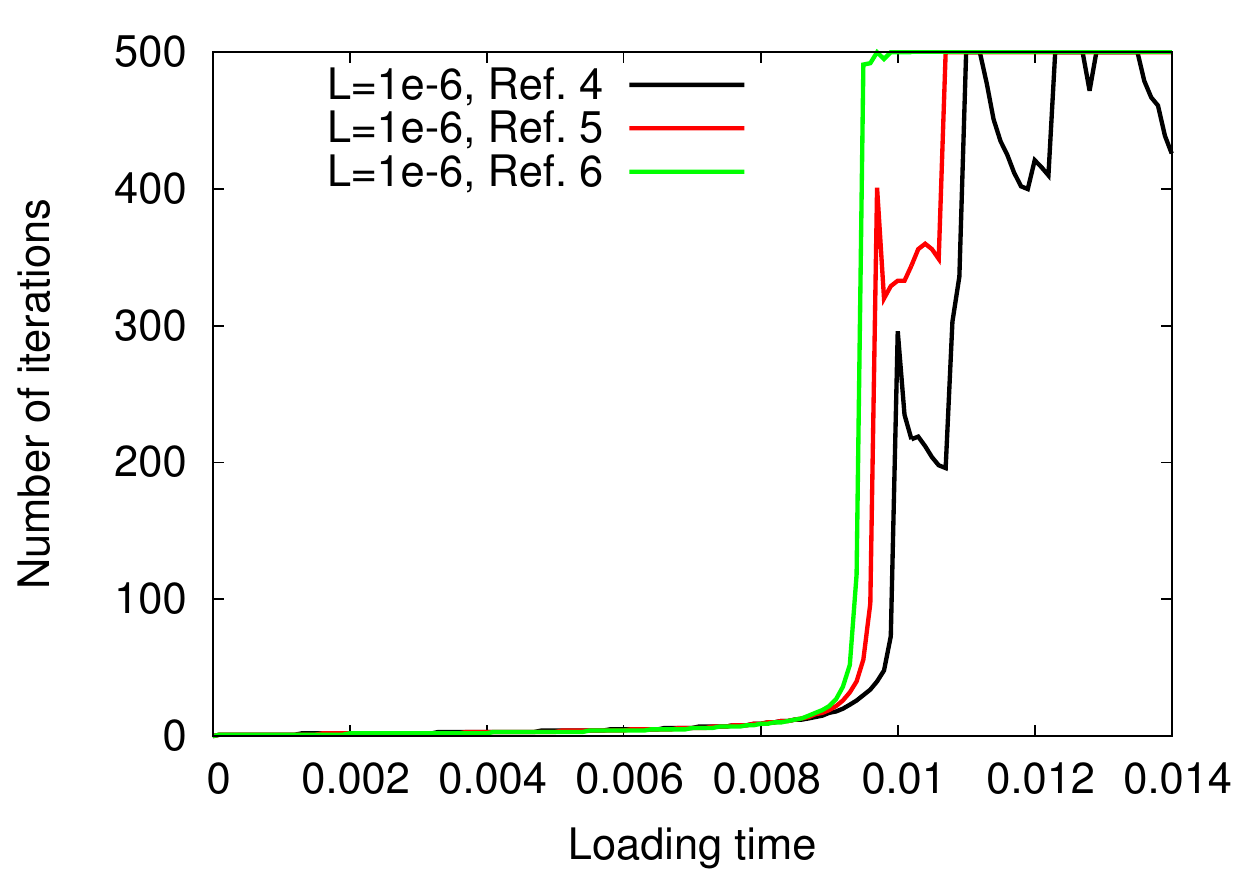}} 
\caption{Example 2: Using $L=1e-6$, comparing different mesh refinement levels $4,5,6$.
At left, the
load-displacement curves displaying the evolution of $F_x$
versus the loading time. 
At right, the number of iterations is displayed.}
\label{miehe_shear_c}
\end{figure}
\begin{figure}[H]
\centering
{\includegraphics[width=8cm]{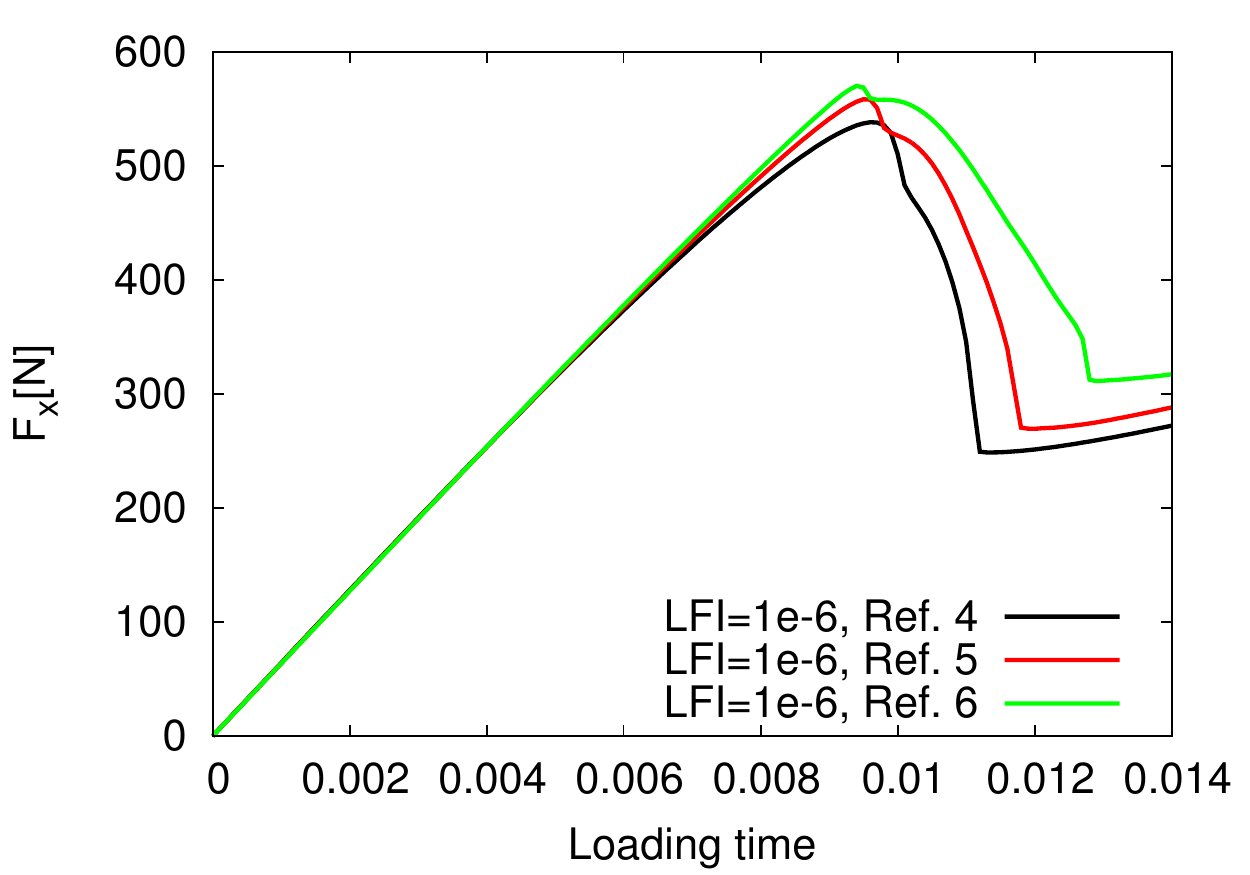}} 
{\includegraphics[width=8cm]{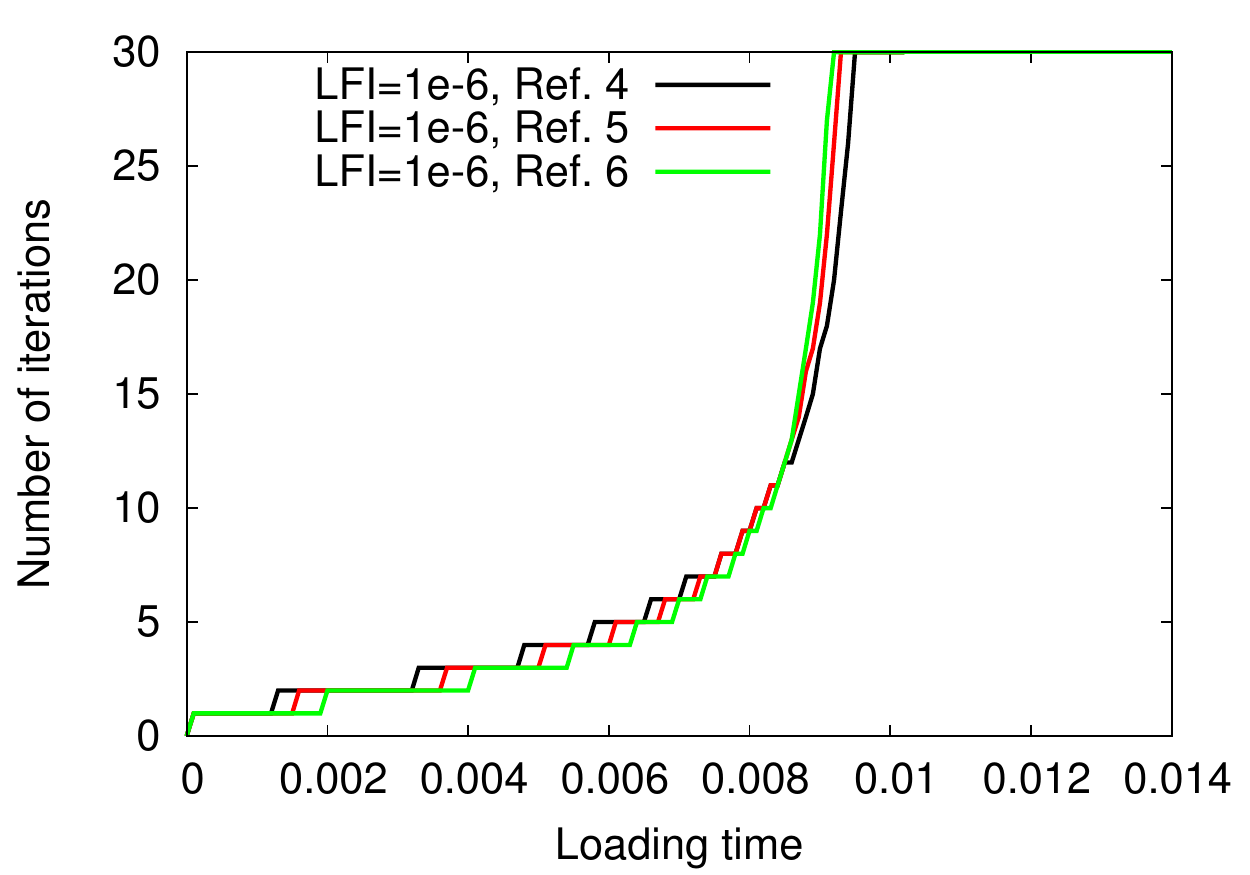}} 
\caption{Example 2: Using $L=1e-6$ and fixing the number of iterations by
  $30$, we compare different mesh refinement levels $4,5,6$.
At left, the
load-displacement curves displaying the evolution of $F_x$
versus the loading time. 
At right, the number of iterations is displayed.}
\label{miehe_shear_d}
\end{figure}
\begin{figure}[H]
\centering
{\includegraphics[width=8cm]{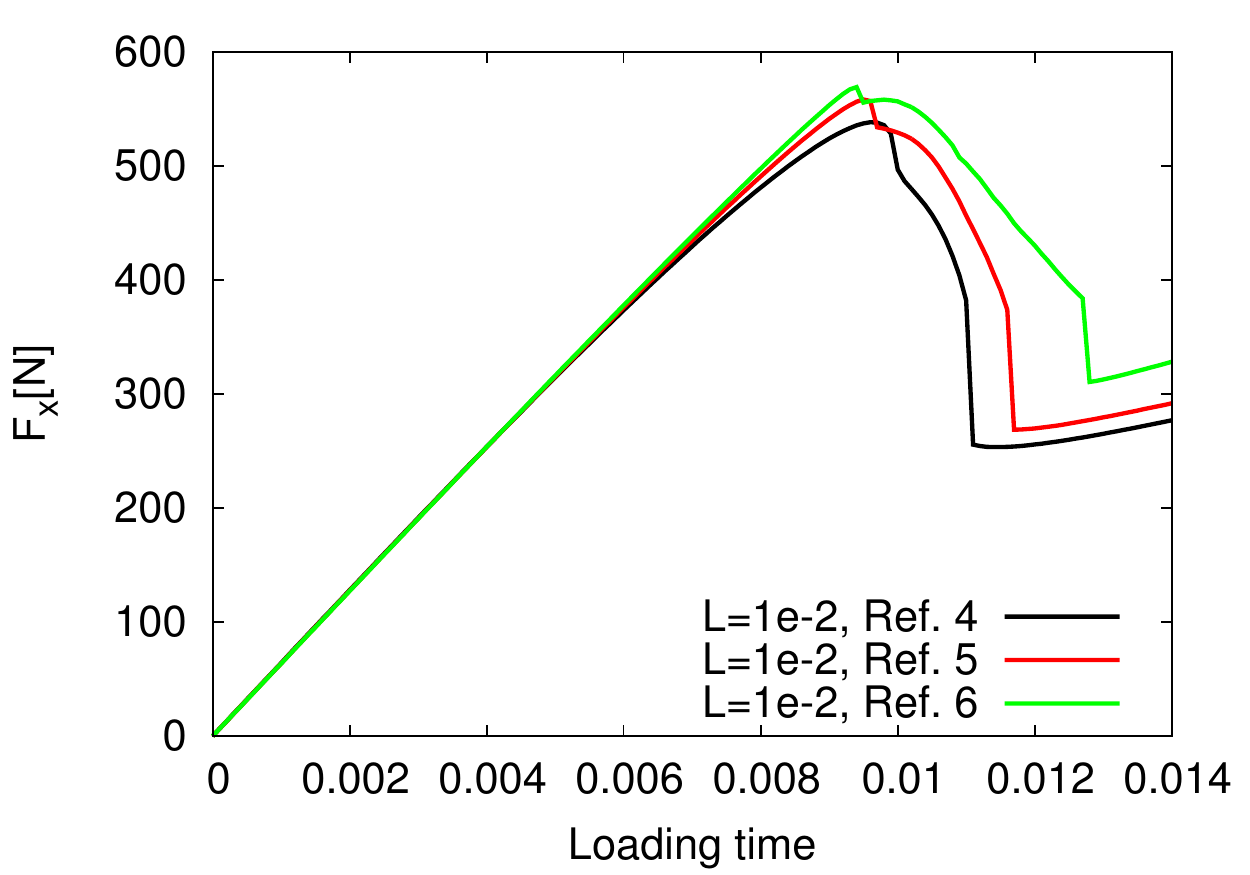}} 
{\includegraphics[width=8cm]{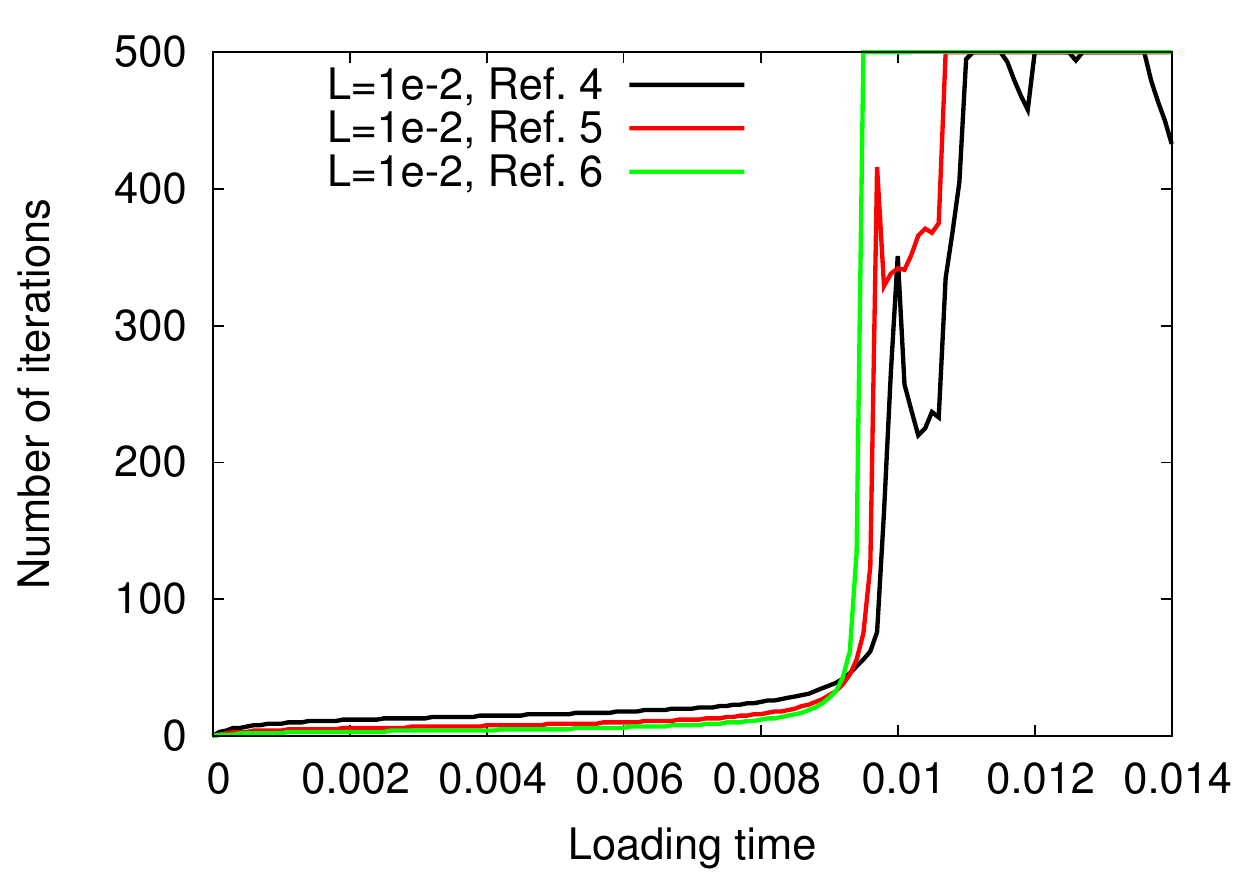}} 
\caption{Example 2: Using $L=1e-2$, we compare different mesh refinement levels $4,5,6$.
At left, the
load-displacement curves displaying the evolution of $F_x$
versus the loading time. 
At right, the number of iterations is displayed.}
\label{miehe_shear_e}
\end{figure}
\begin{figure}[H]
\centering
{\includegraphics[width=8cm]{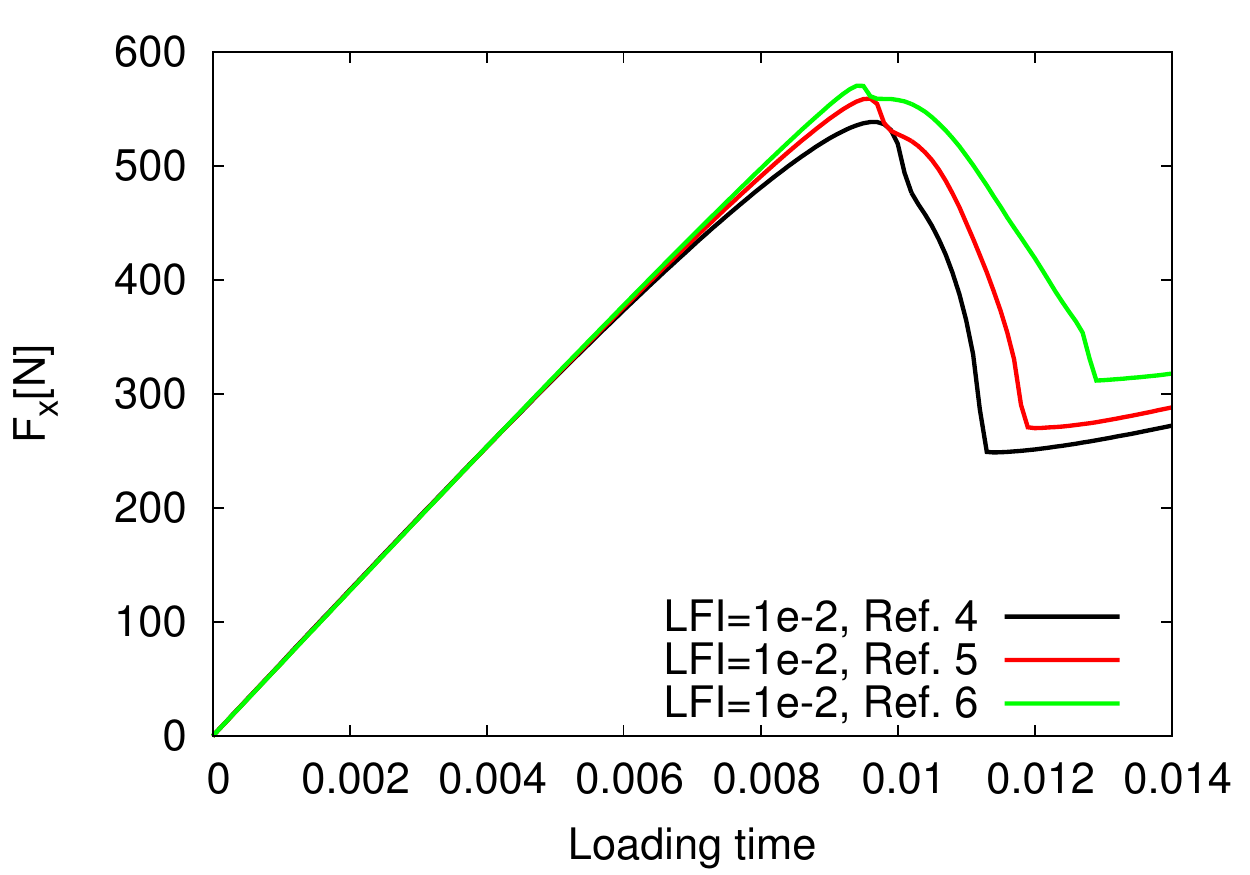}} 
{\includegraphics[width=8cm]{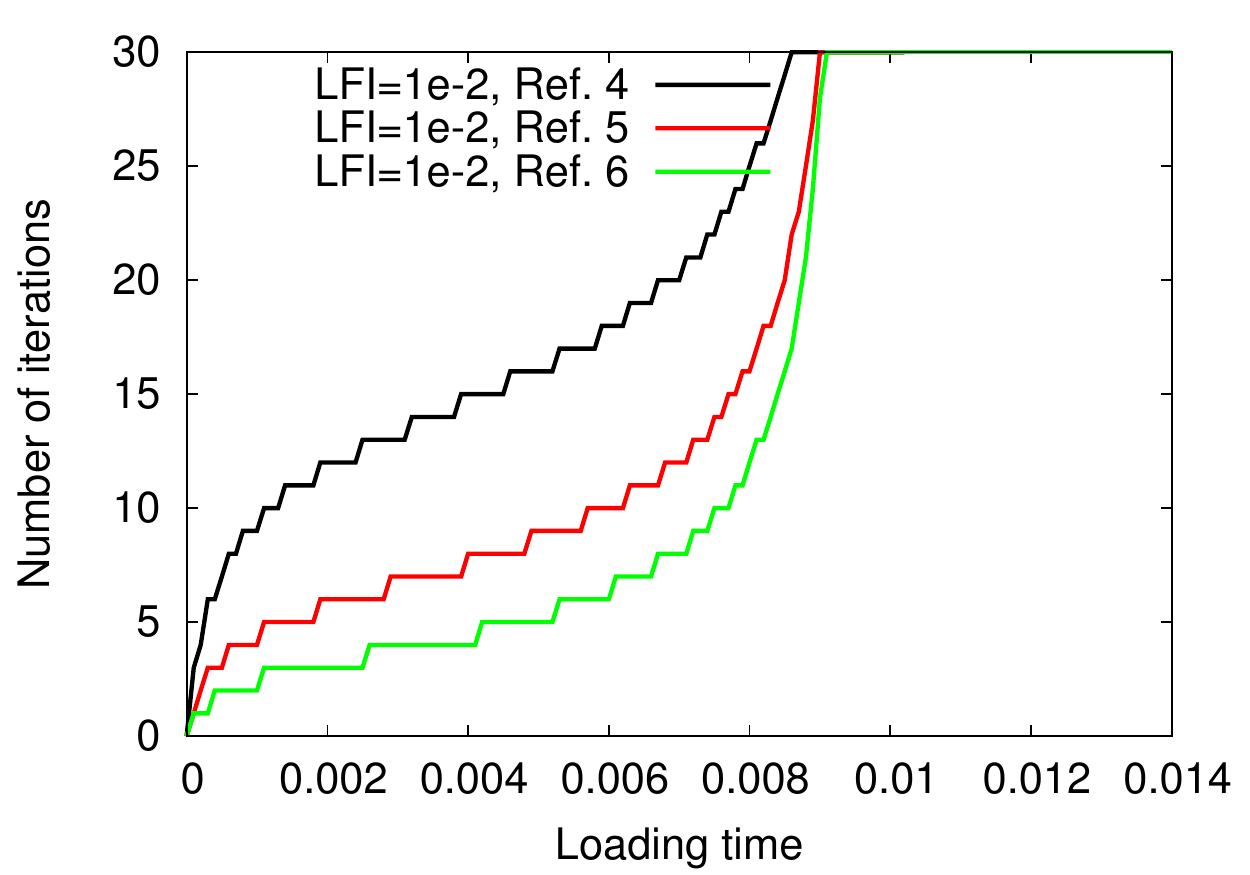}} 
\caption{Example 2: Using $L=1e-2$ and fixing the number of iterations by
  $30$, we compare different mesh refinement levels $4,5,6$.
At left, the
load-displacement curves displaying the evolution of $F_x$
versus the loading time. 
At right, the number of iterations is displayed.}
\label{miehe_shear_f}
\end{figure}
The results are very comparable to the published
literature.
In particular, it is nowadays known that the proposed Miehe et al. stress
splitting 
does not release all stresses once the specimen is broken (see
\cite{AmGeraLoren15}) and it is also known that we do not see convergence 
of the curves when both $h$ and $\eps$ are refined (see \cite{HeWheWi15}).
\subsection{L-shaped panel}
For the configuration of this third example we refer to~\cite{AmGeraLoren15, MesBouKhon15, Wi17_SISC}, which are based on an experimental setup~\cite{Winkler2001}.
We use again the model with strain-energy split; namely~\eqref{iter2_mod}-\eqref{iter1_mod}.
Moreover, in this test a carefully imposed 
irreversibility constraint is important since the specimen 
is pushed, pulled, and again pushed (see Figure \ref{l_shaped_b} for the
loading history on the small boundary part $\Gamma_u$). In the 
pulling phase the fracture vanishes if the penalization is 
not strong enough. 

The geometry and boundary conditions are displayed in Figure \ref{ex_1_config_and_mesh}.
In contrast to the previous examples, no initial crack prescribed.
The initial mesh is $1,2$ and $3$ times uniformly refined, leading to 
$300,1200, 4800$ mesh elements, with $h=$ \SI{29.1548}{mm}, \SI{14.577}{mm}, \SI{7.289}{mm}, respectively.

We increase the displacement $u_D :=u_y= u_y(t)$ on 
$\Gamma_{u}:=\{(x,y)\in B| \, \SI{470}{mm} \leq x\leq \SI{500}{mm}, \, y= \SI{250}{mm}\}$ 
over time, where 
$\Gamma_u$ is a section 
of \SI{30}{mm} length on the right corner of the specimen.  
We apply
a loading-dependent, non-homogeneous Dirichlet condition (see also Figure \ref{l_shaped_b}):
\begin{equation}
\label{uy_test_2}
\begin{aligned}
u_y &= t\cdot \bar{u}, &\quad \bar{u} = \SI{1}{mm/s}, \quad \SI{0.0}{s}\leq t < \SI{0.3}{s},\\
u_y &= (0.6 - t)\cdot \bar{u}, &\quad \bar{u} = \SI{1}{mm/s}, \quad
\SI{0.3}{s}\leq t < \SI{0.8}{s},\\
u_y &= (-1 + t)\cdot \bar{u}, &\quad \bar{u} = \SI{1}{mm/s}, \quad \SI{0.8}{s}\leq t\leq \SI{2.0}{s},
\end{aligned}
\end{equation}
where $t$ denotes the total loading time. Due to this cyclic loading the total
displacement at the end time $T$ = \SI{2}{s} is \SI{1}{mm}.
\begin{figure}[H]
\centering
{\includegraphics[width=8cm]{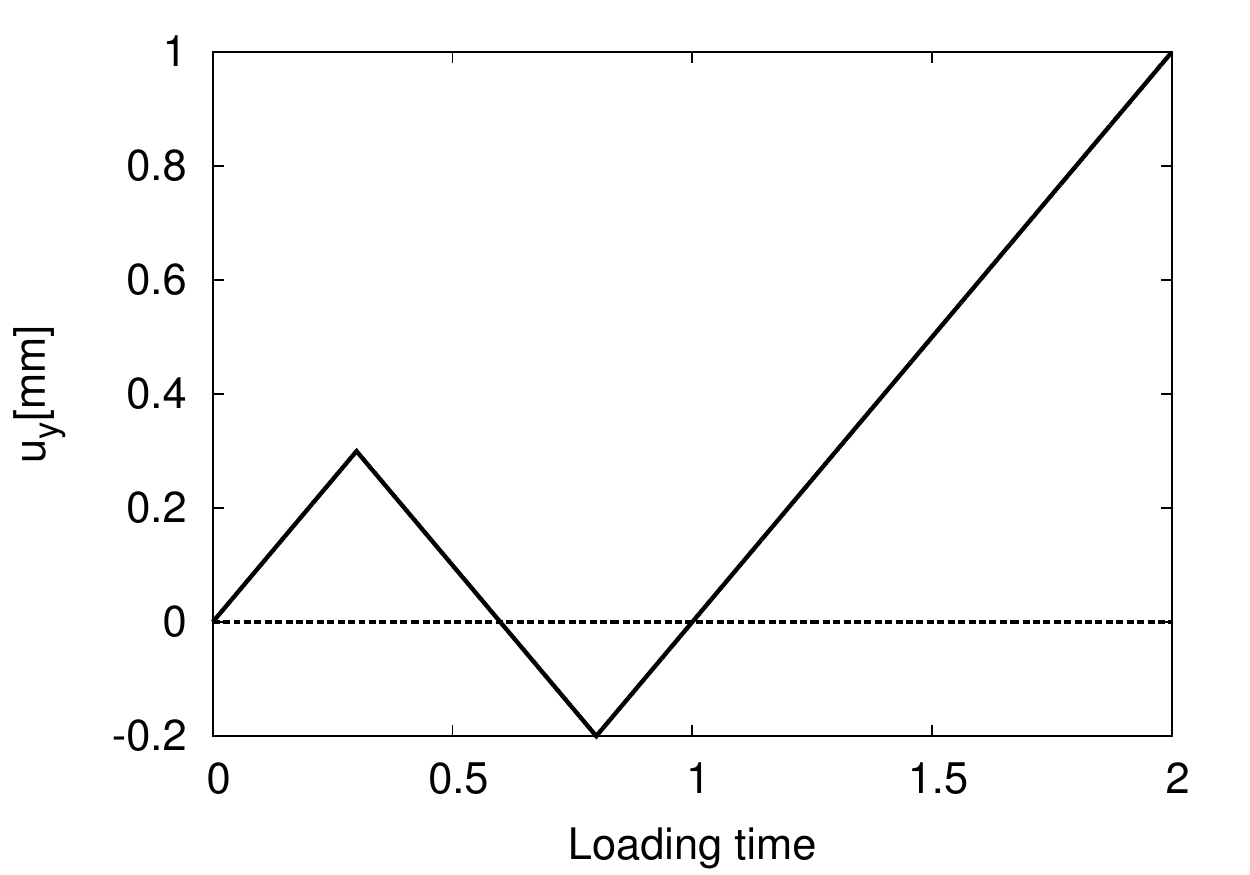}} 
\caption{Example 3: Loading history on $\Gamma_u$ for the L-shaped panel test.}
\label{l_shaped_b}
\end{figure}

\begin{figure}[H]
\centering
{\includegraphics[width=8cm]{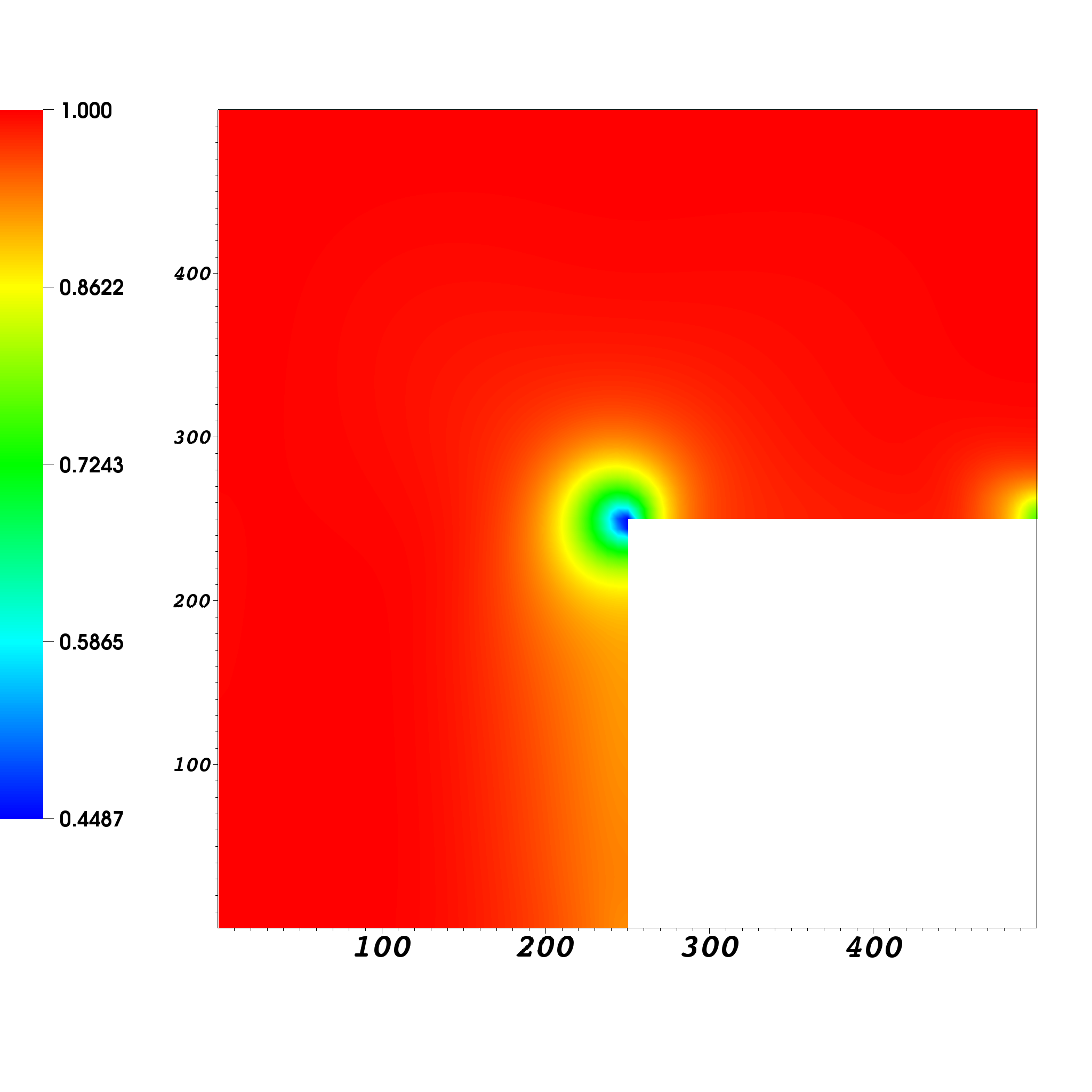}} 
{\includegraphics[width=8cm]{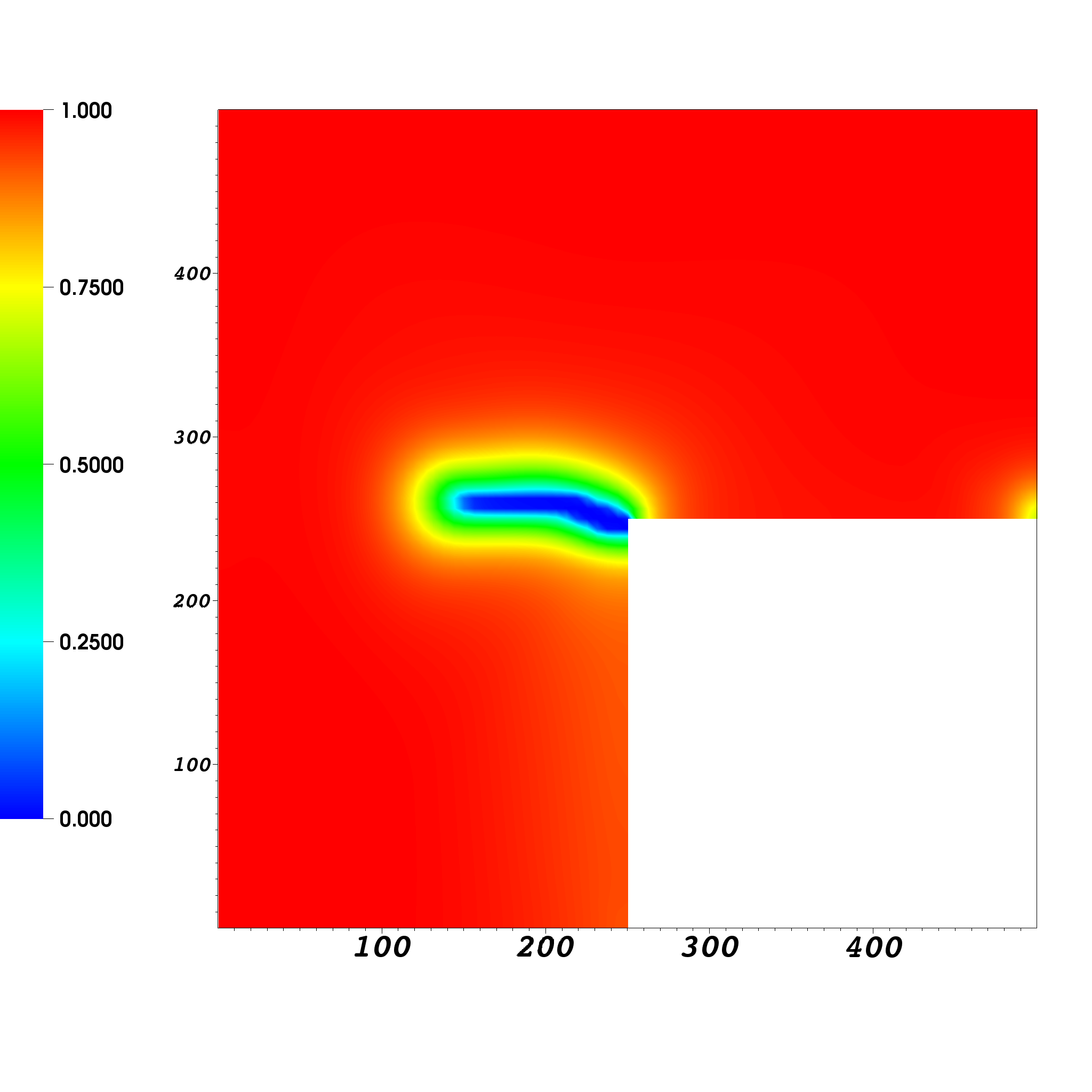}} 
{\includegraphics[width=8cm]{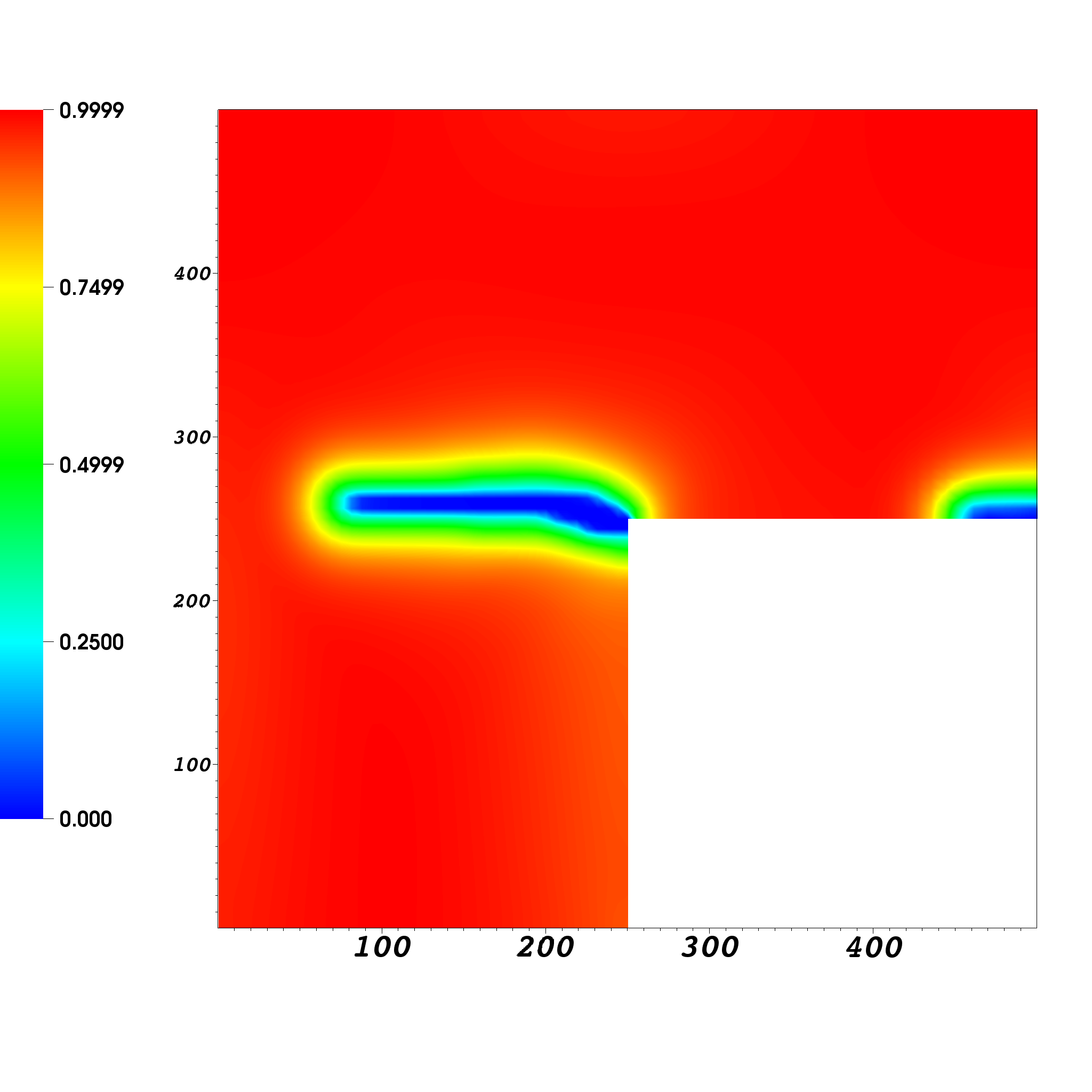}} 
{\includegraphics[width=8cm]{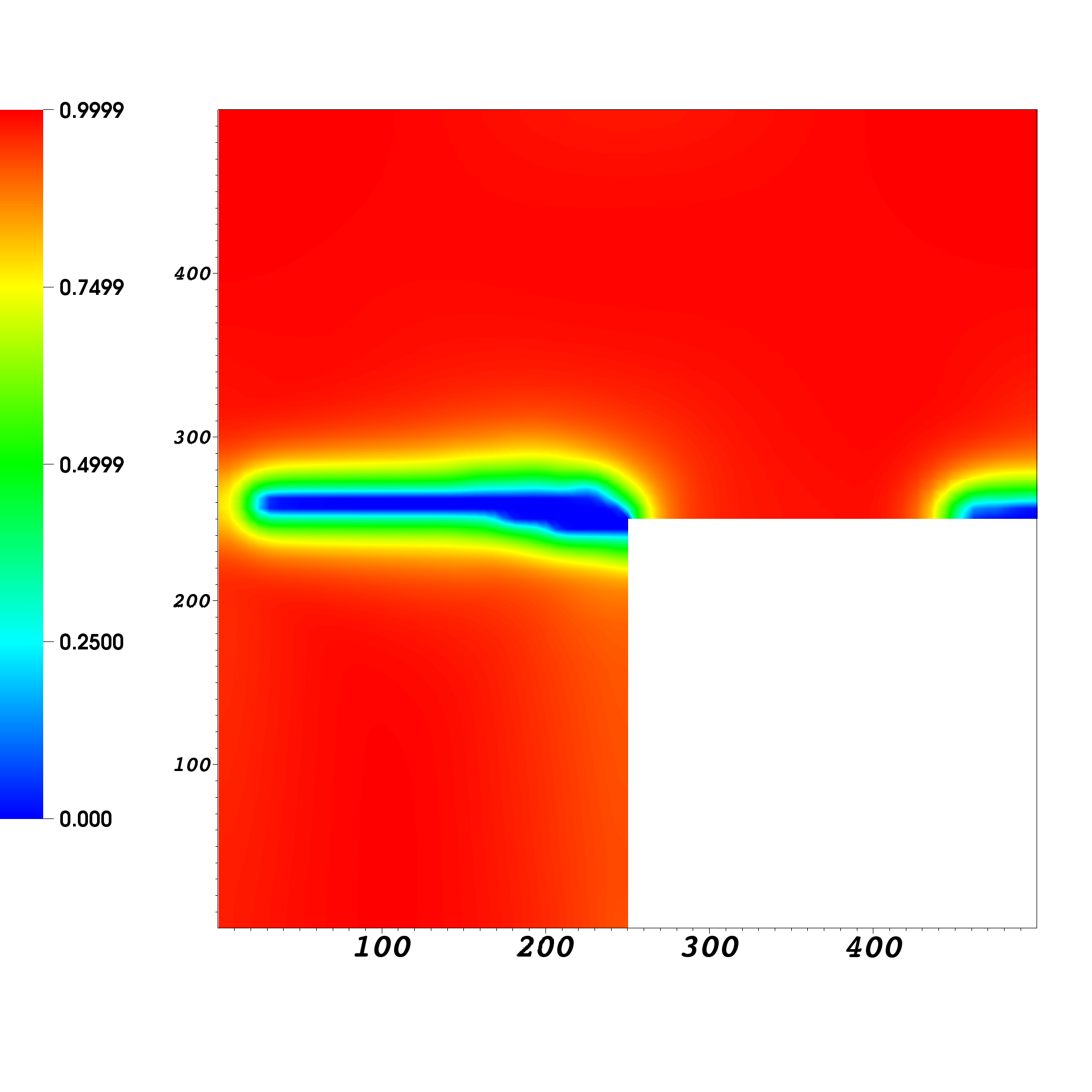}} 
\caption{Example 3: crack path of the L-shaped panel test at the loading 
steps $220, 300, 1450, 2000$.}
\label{l_shaped_a}
\end{figure}

\newpage
We use
$\mu_s$ = \SI{10.95}{kN/mm^2}, $\lambda_s$ = \SI{6.16}{kN/mm^2}, and
$G_c$ = \SI{8.9d-5}{kN/mm}. 
The time (loading) step size is $\delta t$ =\SI{e-3}{s}.
Furthermore, we set $k = 10^{-10}h$[mm] and 
$\eps=2h$.  As before, we observe the number of Newton iterations and 
we evaluate the surface load vector 
\begin{equation*}
\tau = (F_x,F_y) := \int_{\Gamma_{u}} \bsig(u)\nu\, \textnormal{d}s,
\end{equation*}
with normal vector $\nu$,
and now we are particularly interested in $F_y$. The crack path at the chosen time step snapshots in Figure \ref{l_shaped_a}
corresponds to the published literature \cite{Wi17_SISC,MesBouKhon15,AmGeraLoren15}.
The load-displacement curves and the number of iterations for different $L$
and corresponding mesh refinement studies are displayed in the Figures 
\ref{l_shaped_c1}, \ref{l_shaped_c}, \ref{l_shaped_d}, \ref{l_shaped_e} 
and \ref{l_shaped_f}.

\begin{figure}[H]
\centering
{\includegraphics[width=8cm]{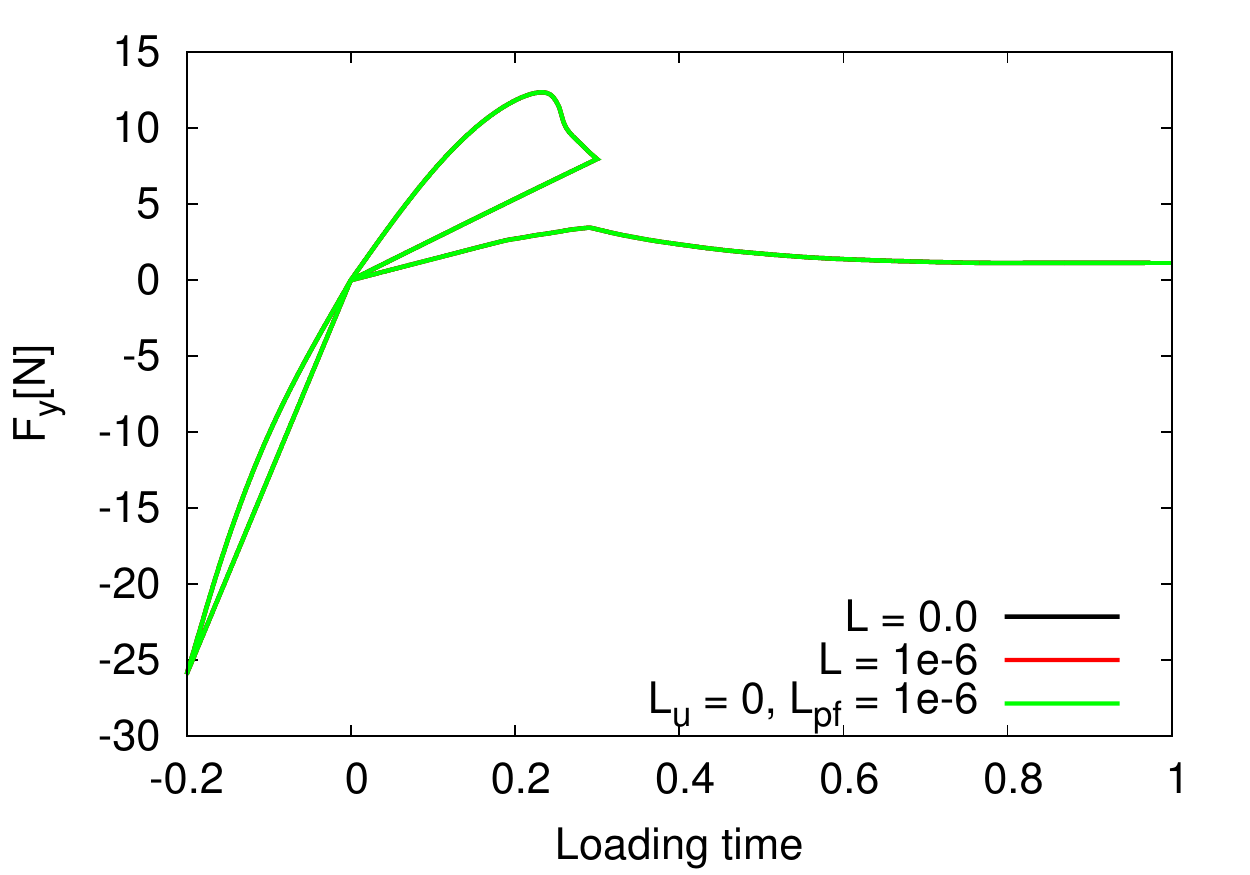}} 
{\includegraphics[width=8cm]{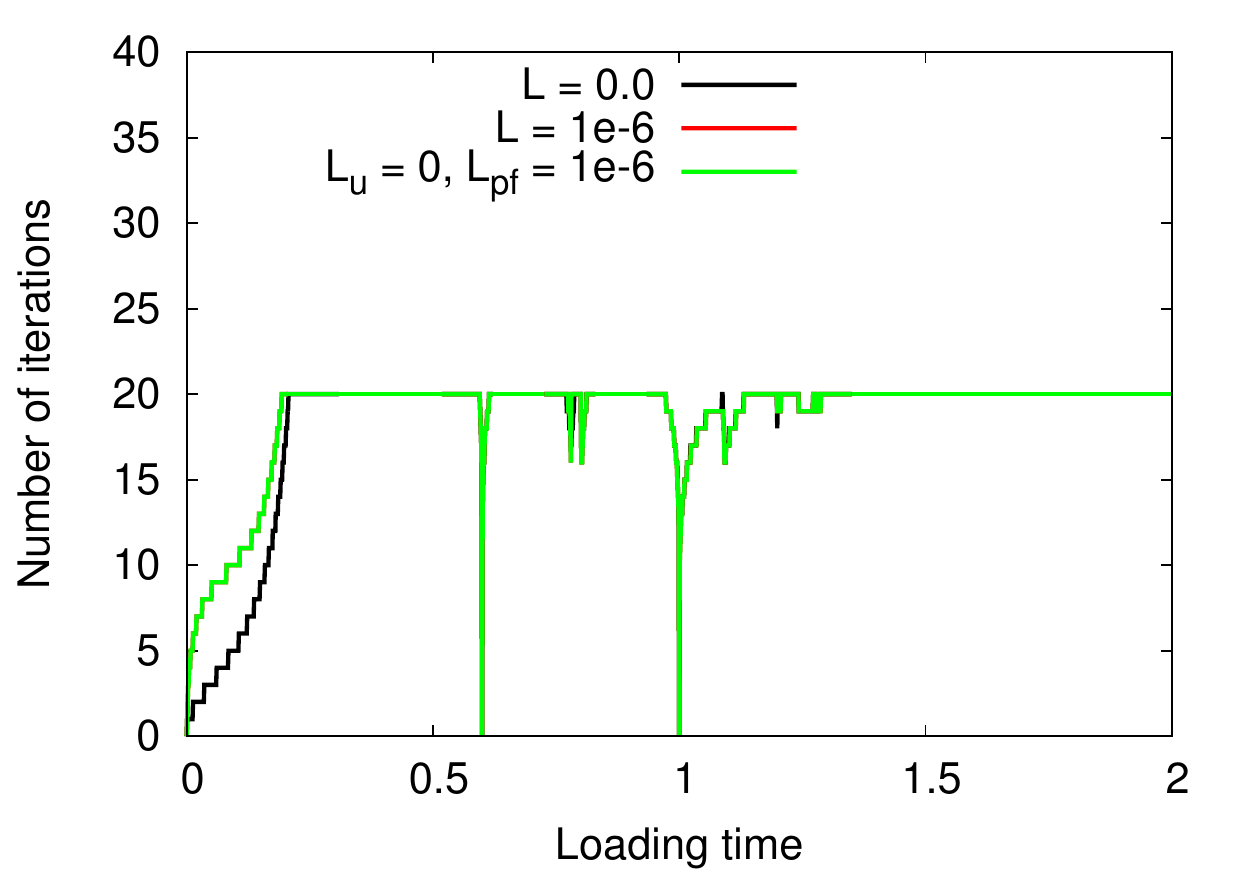}} 
\caption{Example 3: Comparison of different $L$. Observe that stabilizing the mechanics subproblem has no effect in this example. At left, the
load-displacement curves displaying the evolution of $F_y$
versus $u_y$ are shown. At right, the number of staggered iterations is displayed.}
\label{l_shaped_c1}
\end{figure}

\begin{figure}[H]
\centering
{\includegraphics[width=8cm]{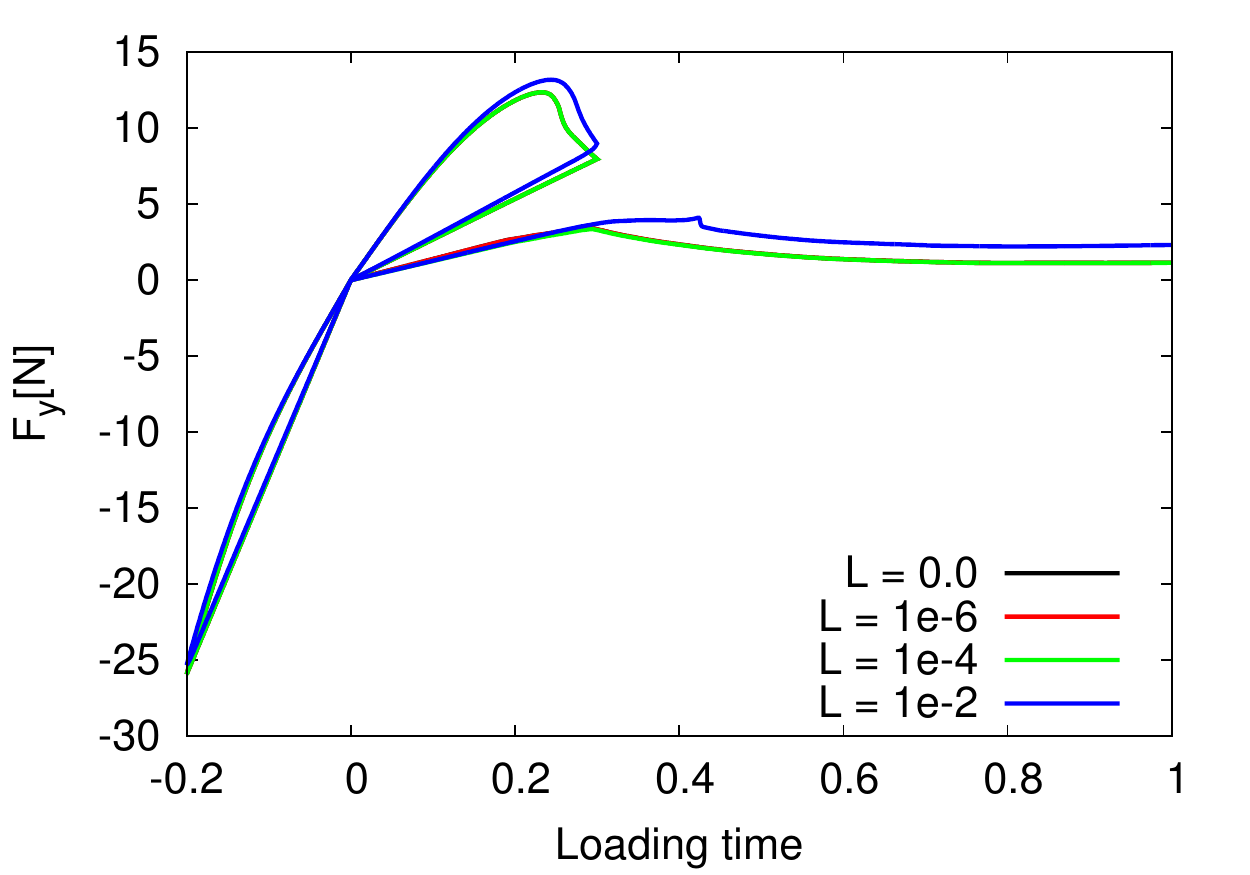}} 
{\includegraphics[width=8cm]{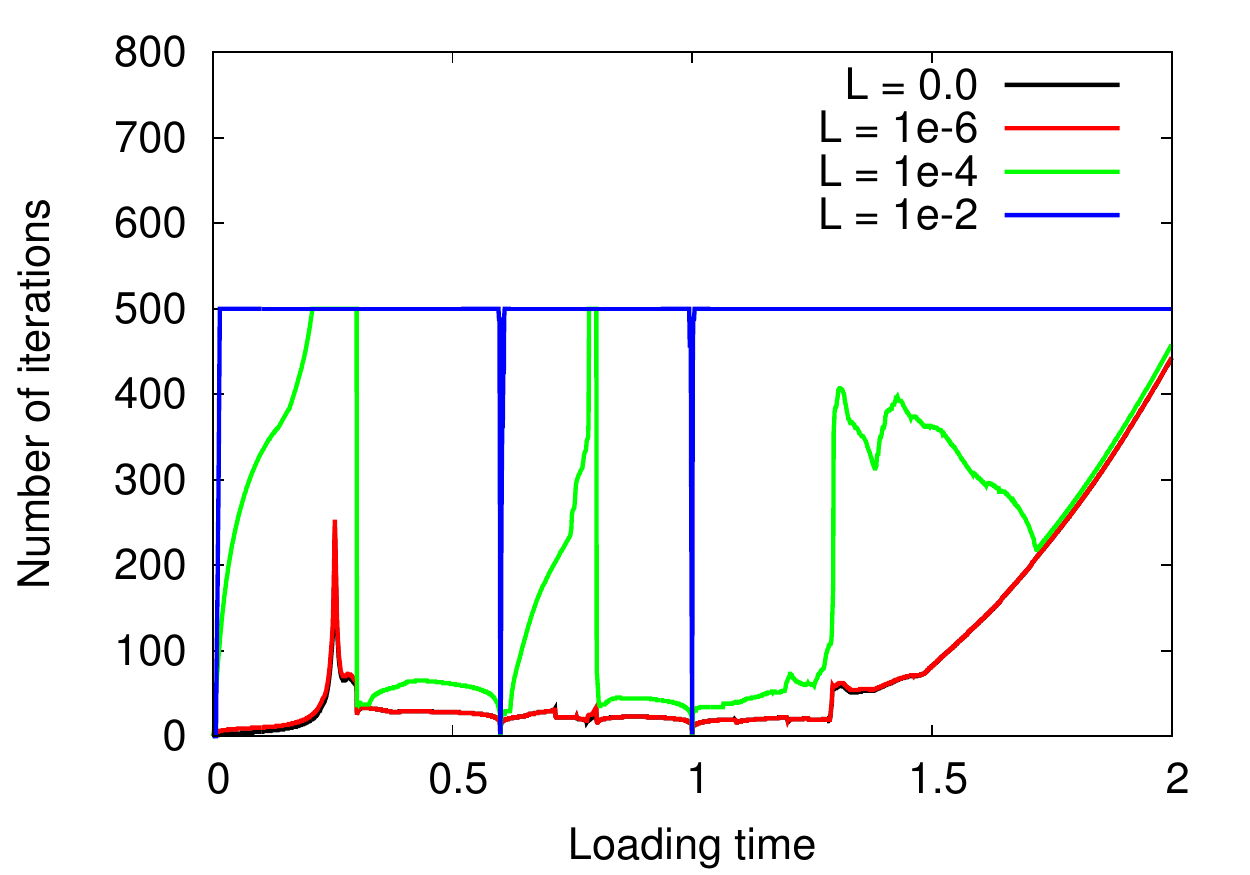}} 
\caption{Example 3: Comparison of different $L$ with an open number of
  staggered iterations (fixed by $500$ though). At left, the
load-displacement curves displaying the evolution of $F_y$
versus $u_y$ are shown. At right, the number of staggered iterations is displayed.}
\label{l_shaped_c}
\end{figure}

\begin{figure}[H]
\centering
{\includegraphics[width=8cm]{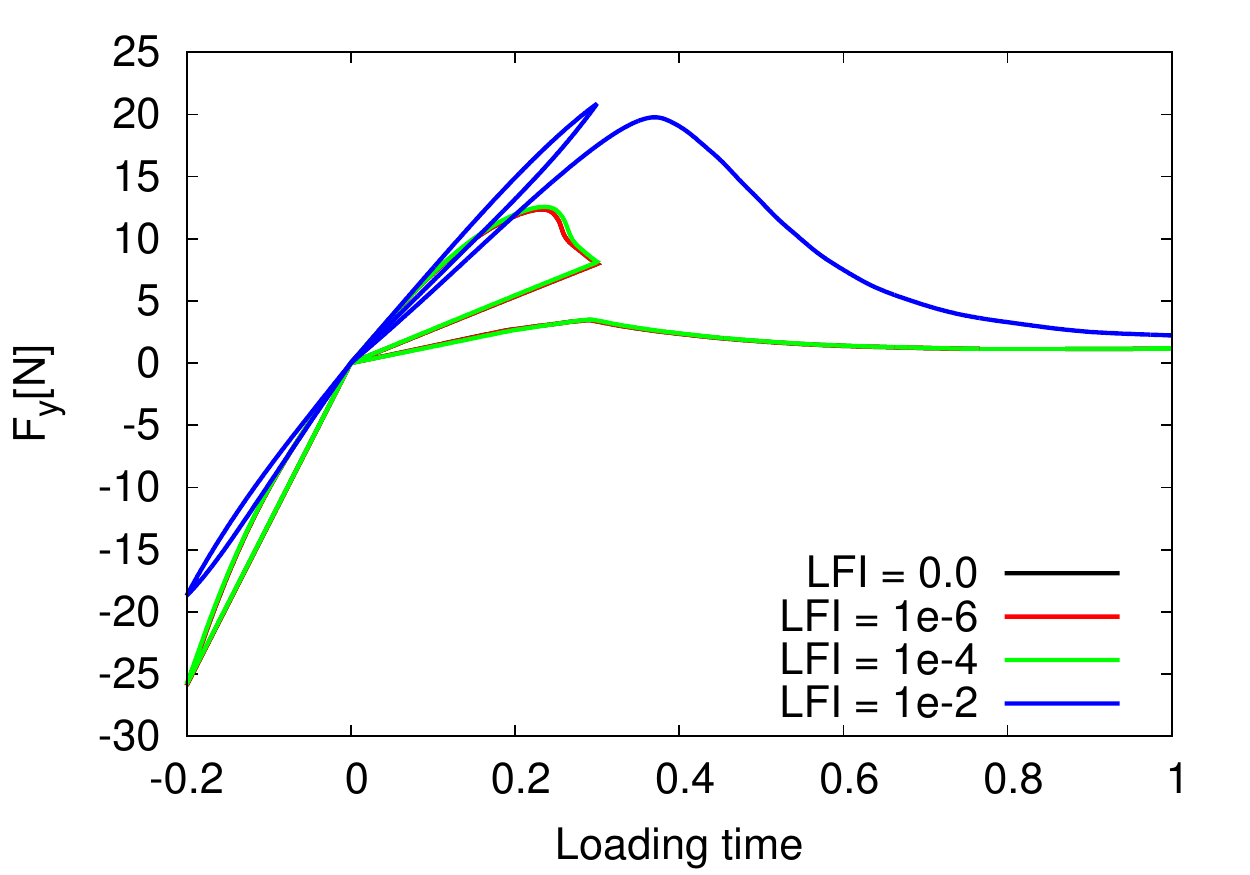}} 
{\includegraphics[width=8cm]{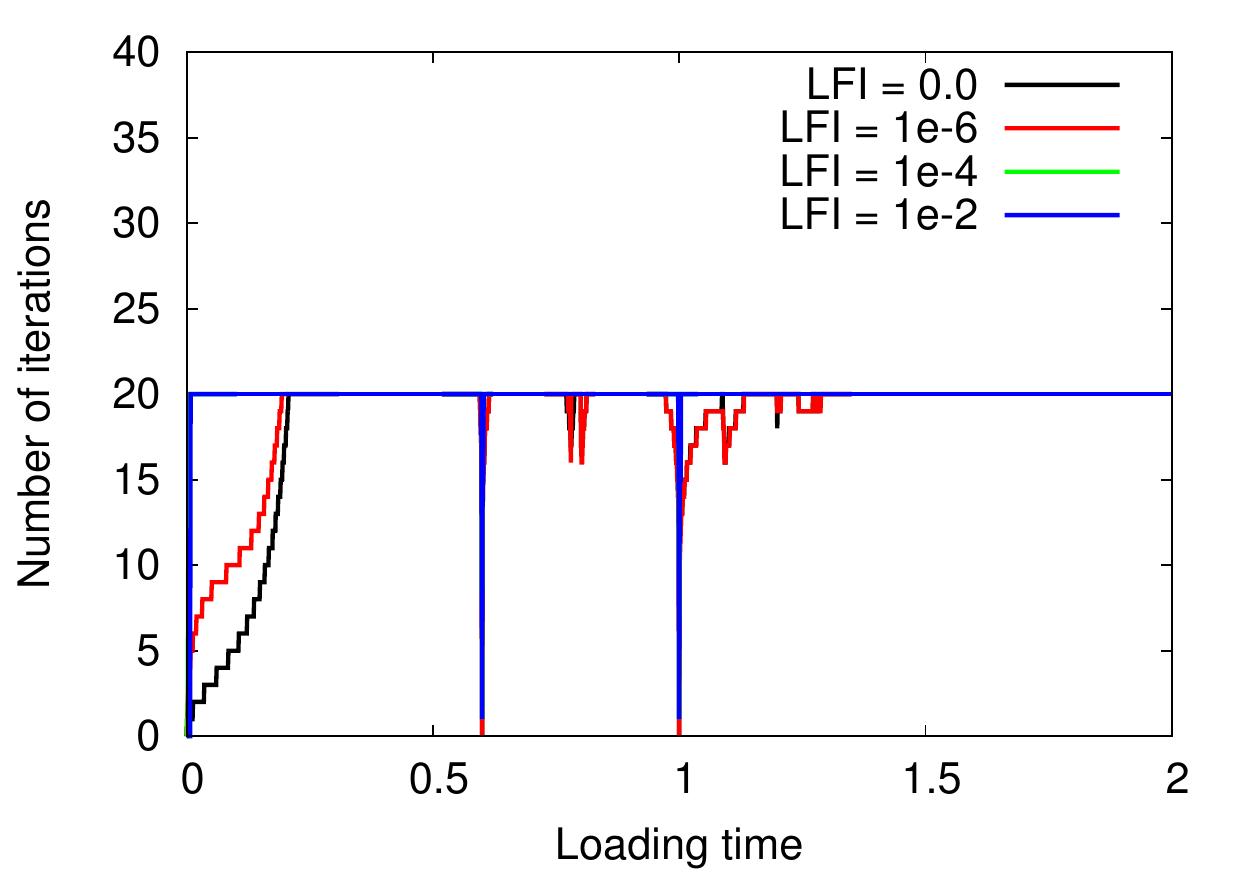}} 
\caption{Example 3: Comparison of different $L$ with a fixed number of $20$
  staggered iterations. At left, the
load-displacement curves displaying the evolution of $F_y$
versus $u_y$ are shown. At right, the number of staggered iterations is displayed.}
\label{l_shaped_d}
\end{figure}

\begin{figure}[H]
\centering
{\includegraphics[width=8cm]{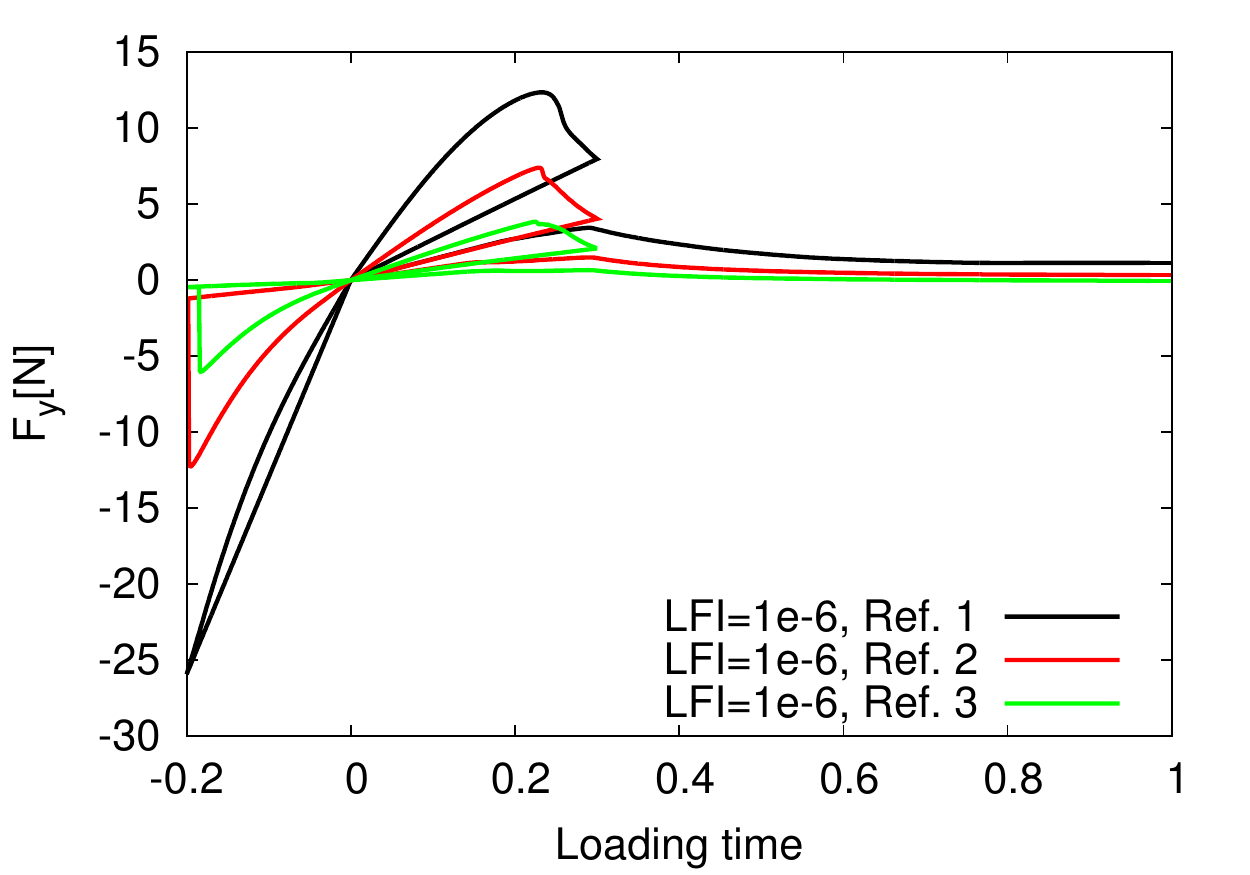}} 
{\includegraphics[width=8cm]{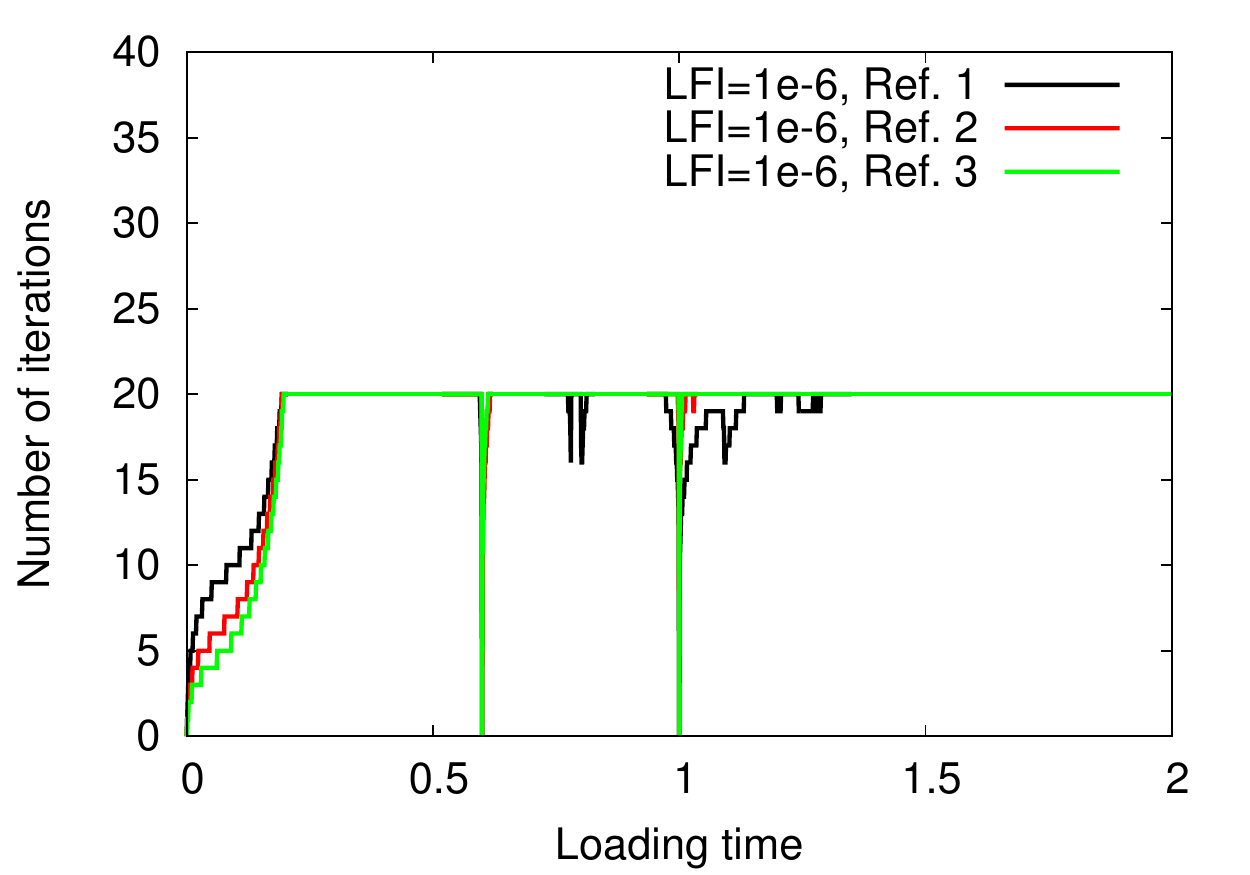}} 
\caption{Example 3: Using $L=1e-6$ and a fixed number of $20$
  staggered iterations, we compare the results on different refinement levels $1,2,3$. At left, the
load-displacement curves displaying the evolution of $F_y$
versus $u_y$ are shown. At right, the number of staggered iterations is displayed.}
\label{l_shaped_e}
\end{figure}

\begin{figure}[H]
\centering
{\includegraphics[width=8cm]{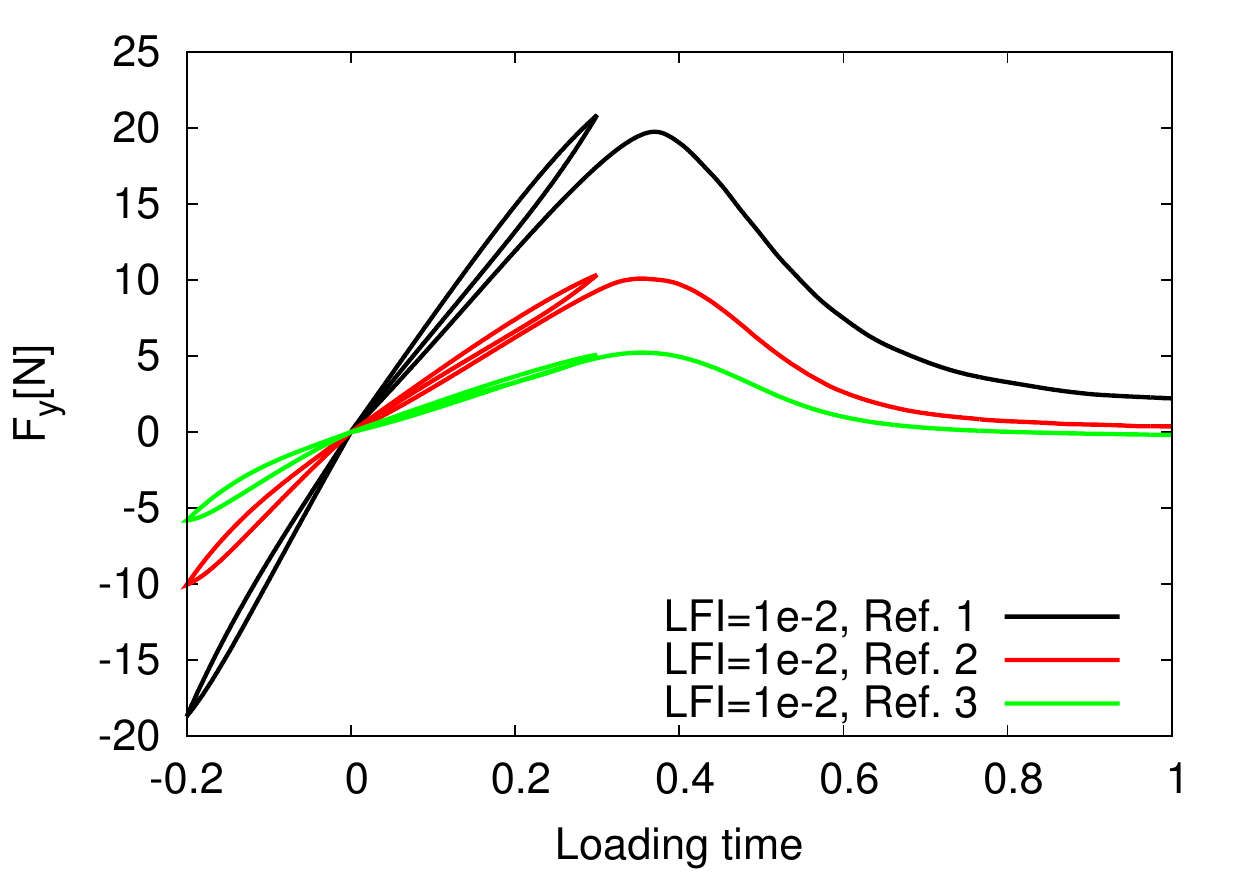}} 
{\includegraphics[width=8cm]{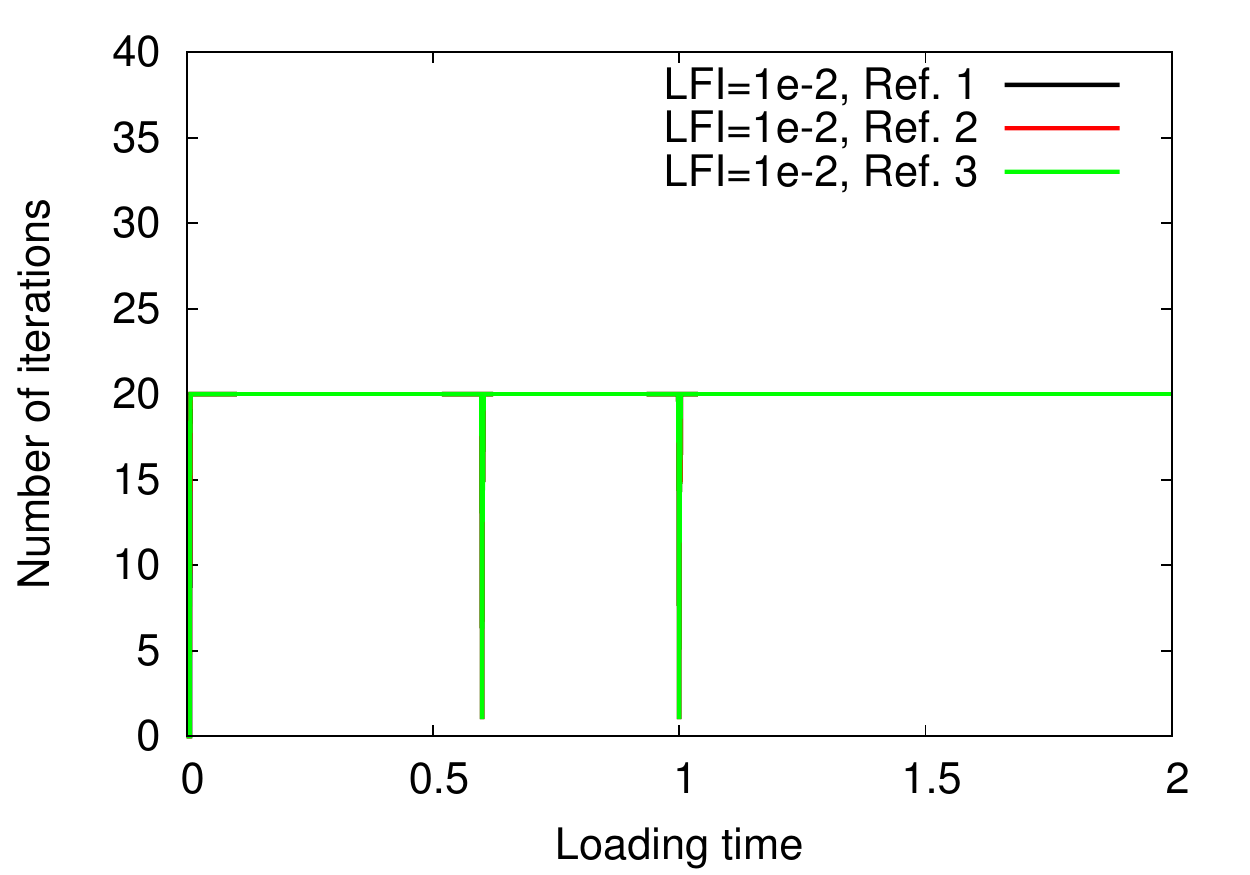}} 
\caption{Example 3: Using $L=1e-2$ and a fixed number of $20$
  staggered iterations, we compare the results on different refinement levels $1,2,3$. At left, the
load-displacement curves displaying the evolution of $F_y$
versus $u_y$ are shown. At right, the number of staggered iterations is displayed.}
\label{l_shaped_f}
\end{figure}

\newpage
\subsection{Verification of Assumption~\ref{hypo}}\label{verify}
In this last set of computations, we verify whether 
Assumption~\ref{hypo} holds true in our computations. We choose some 
prototype settings, namely on the coarest mesh level Ref. 4 and
$L_u = L_{\varphi} = 1e-6$. In Figure \ref{pic_comp_strains}, we observe 
that $\ess \sup_{x\in B} |\be(u^{n}(x))|$ varies, but always can be bounded 
from above with $M>0$. The value of $\ess \sup_{x\in B} |\be(u^{n}(x))|$ is 
the final strain when the $L$-scheme terminates. The minimum and maximum
values 
shows that there are no significant variations in $\ess \sup_{x\in B}
|\be(u^{n}(x))|$ during the $L$-scheme iterations with respect to 
the finally obtained value.

\begin{figure}[H]
\centering
{\includegraphics[width=8cm]{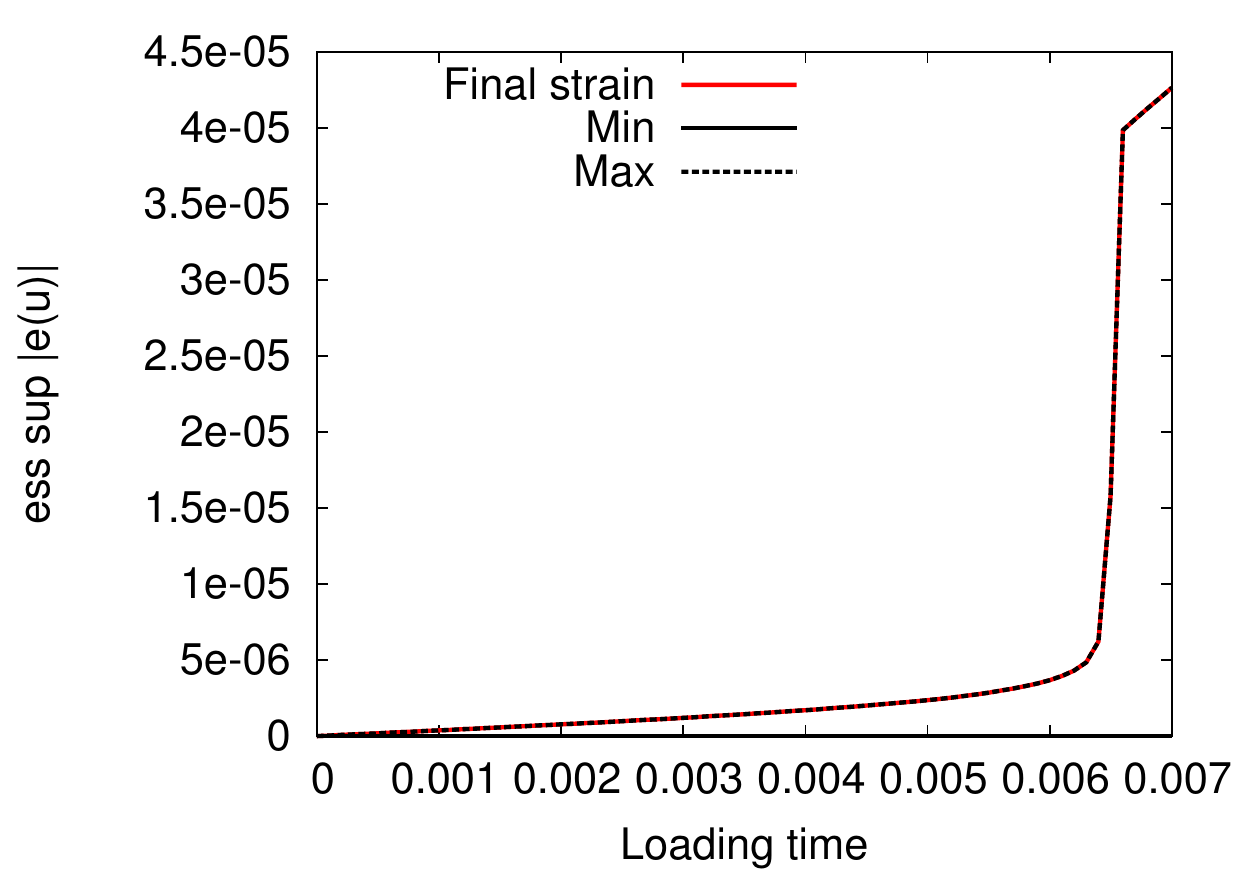}} 
{\includegraphics[width=8cm]{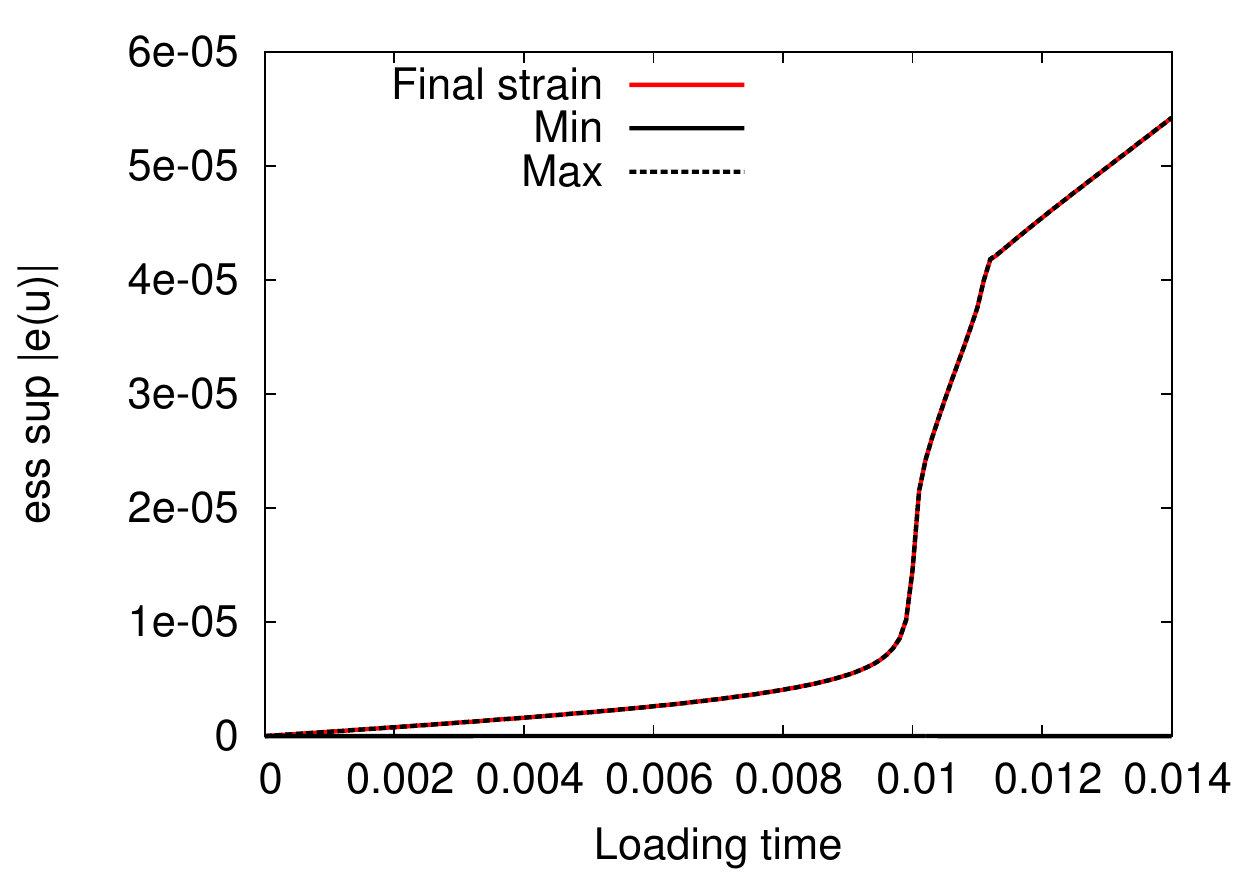}} 
{\includegraphics[width=8cm]{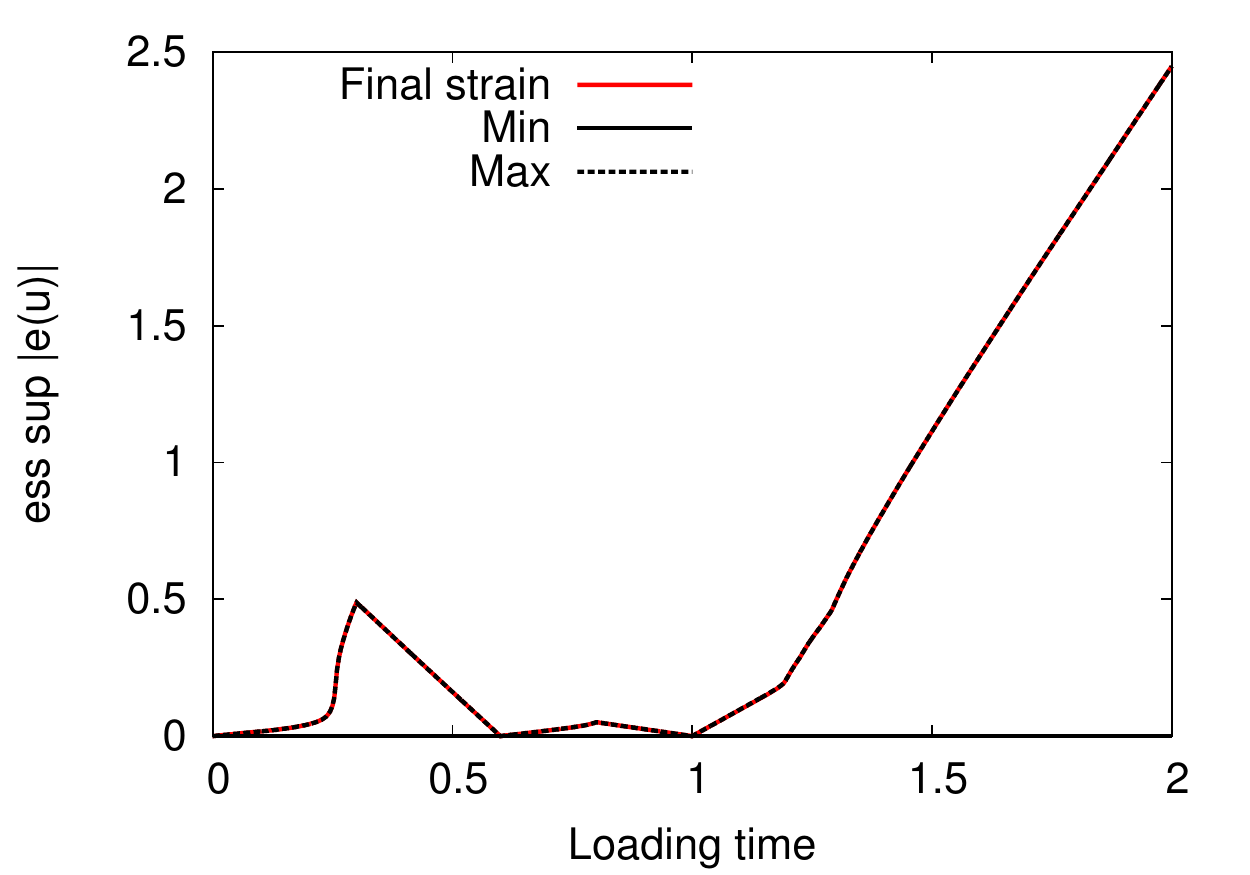}} 
\caption{Comparison of $\ess \sup_{x\in B} |\be(u^{n}(x))|$ 
and the minimal/maximal $\ess \sup_{x\in B} |\be(u^{n}(x))|$ 
per loading time step.}
\label{pic_comp_strains}
\end{figure}

\section{Conclusions}\label{sec:conclusions}
We have proposed a novel staggered iterative algorithm for brittle fracture
phase field models. This algorithm is employing stabilization and
linearization techniques known in the literature as the `$L$-scheme', 
which is a generalization of the Fixed Stress Splitting algorithm coming 
from poroelasticity. 
Through theory and numerical examples we have investigated the performance of our proposed variants of the $L$-scheme for brittle fracture phase field problems.


Under natural constraints that the elastic mechanical energy remains bounded, and that the model parameter $\epsilon$ is sufficiently
large (i.e., that the diffusive zone around crack surfaces must be sufficiently
thick), we have shown that a contraction of successive difference functions in energy norms can
be obtained from the proposed scheme. This result implies the algorithm is
converging monotonically with a linear convergence rate. However, in the convergence analysis there appears some unknown constants which makes the precise convergence rate, as well as the precise lower bound on $\epsilon$ unknown. 

We provide detailed numerical tests where our proposed scheme is employed on
several phase field brittle fracture bench-mark problems. 
For each numerical example we provide findings for different values of 
stabilization parameters. For most cases we let $L_u = L_\phi > 0$, but for comparison we include also 
for the stabilization configurations $L_u = 0$ with $L_\phi >
0$, and $L_u = L_\phi = 0$. For the test cases 
presented here, there is only Example 1 where $L_u = 0$ does not
work. This might be due to 
the very rapid crack growth, which sets Example 1 apart from Examples 2 and
3. In this regard, we 
conclude that further work is needed to find an optimal configuration of $L_u$
and $L_\phi$. For all numerical test we also provide computational justification for the assumption of bounded elastic mechanical energy. Furthermore, a slight dependency on $h$ in the iteration counts is observed in
the numerical tests, 
but this is expected since we use $\epsilon = 2h$, and as our analysis demonstrates, the convergence rate is
dependent on $\epsilon$. 
The variation in iteration numbers with mesh refinement is in any case
sufficiently small enough that we 
conclude our algorithm is robust with respect to mesh refinement.

Moreover, due to the iteration spikes at the critical loading steps, we have
included, for comparison, several results in which the iteration has been
truncated (labeled $LFI$ in Examples 1-3). 
Due to the monotonic convergence of the scheme, this strategy still produces
acceptable results, 
while effectively avoiding the iteration spikes. We therefore conclude, at
least for the 
particular examples presented here, that a truncation of the $L$-scheme can be
employed for 
greatly improved efficiency with only negligible 
(depending on the situation at hand, of course) loss of accuracy.

\section*{Acknowledgements}
This work forms part of Research Council of Norway project 250223. The authors
also acknowledges the support from the University of Bergen.
The first author, MKB, thanks the group `Wissenschaftliches Rechnen' of 
the Institute of Applied Mathematics of the Leibniz University Hannover 
for the hospitality during his research stay from Oct - Dec 2018.
The second author, TW, has been supported by the German Research Foundation, Priority Program 1748 (DFG SPP 1748) named
\textit{Reliable Simulation Techniques in Solid Mechanics. Development of
Non-standard Discretization Methods, Mechanical and Mathematical Analysis}. 
The subproject within the SPP1748 reads 
\textit{Structure Preserving Adaptive Enriched Galerkin Methods for 
Pressure-Driven 3D Fracture Phase-Field Models} (WI 4367/2-1).

\enlargethispage{0.2cm}

\bibliographystyle{siamplain}
\bibliography{ref}

\end{document}